\documentclass[11pt]{amsart}
\usepackage{amssymb,amsfonts}
\usepackage{bbm}
\usepackage[english]{babel}
\usepackage[T1]{fontenc} 
\usepackage[utf8]{inputenc}  
\usepackage{enumitem}\allowdisplaybreaks

\usepackage{amsaddr}

\usepackage{vmargin}
\setmarginsrb{3cm}{2cm}{3cm}{2cm}{0cm}{2cm}{0cm}{1cm}

\usepackage{tikz}
\def\smath#1{\text{\scalebox{.7}{$#1$}}}
\definecolor{MyGreen}{rgb}{0.13,0.55,0.13}
\definecolor{vert}{rgb}{0.0, 0.5, 0.0}

\theoremstyle{plain}
\newtheorem{theorem}{Theorem}
\newtheorem{lemma}[theorem]{Lemma}
\newtheorem{corollary}[theorem]{Corollary}
\newtheorem{proposition}[theorem]{Proposition}
\theoremstyle{definition}

\newtheorem{example}[theorem]{Example}
\newtheorem{remark}[theorem]{Remark} 

\newcommand{\R}{\mathbb R}
\newcommand{\N}{\mathbb N}
\newcommand{\Z}{\mathbb Z}
\newcommand{\lex}{\mathrm{lex}}
\newcommand{\B}{\boldsymbol{\beta}}
\newcommand{\floor}[1]{\lfloor#1\rfloor}
\newcommand{\ceil}[1]{\lceil#1\rceil}
\newcommand{\Int}{[\![0,p-1]\!]}

\title{Dynamical behavior of alternate base expansions}

\author{\'Emilie Charlier$^1$,  C\'elia Cisternino$^{1,*}$ and Karma Dajani$^2$}
 \address{$^1$Department of Mathematics,
University of Li\`ege,
All\'ee de la D\'ecouverte 12,
4000 Li\`ege, Belgium\\
$^2$Department of Mathematics,
Utrecht University, P.O. Box 80010, 3508TA Utrecht, The Netherlands
}
\email{echarlier@uliege.be, ccisternino@uliege.be and k.dajani1@uu.nl}
\thanks{$^*$Corresponding author.}

\begin{document}

\begin{abstract}
We generalize the greedy and lazy $\beta$-transformations for a real base $\beta$ to the setting of alternate bases $\B=(\beta_0,\ldots,\beta_{p-1})$, which were recently introduced by the first and second authors as a particular case of Cantor bases. As in the real base case, these new transformations, denoted $T_{\B}$ and $L_{\B}$ respectively, can be iterated in order to generate the digits of the greedy and lazy $\B$-expansions of real numbers. The aim of this paper is to describe the dynamical behaviors of $T_{\B}$ and $L_{\B}$. We first prove the existence of a unique absolutely continuous (with respect to an extended Lebesgue measure, called the $p$-Lebesgue measure) $T_{\B}$-invariant measure. We then show that this unique measure is in fact equivalent to the $p$-Lebesgue measure and that the corresponding dynamical system is ergodic and has entropy $\frac{1}{p}\log(\beta_{p-1}\cdots \beta_0)$. We then express the density of this measure and compute the frequencies of letters in the greedy $\B$-expansions. We obtain the dynamical properties of $L_{\B}$ by showing that the lazy dynamical system is isomorphic to the greedy one. We also provide an isomorphism with a suitable extension of the $\beta$-shift. Finally, we show that the $\B$-expansions can be seen as $(\beta_{p-1}\cdots \beta_0)$-representations over general digit sets and we compare both frameworks. 
\end{abstract}

\maketitle

\bigskip
\hrule
\bigskip

\noindent 2010 {\it Mathematics Subject Classification}: 11A63, 37E05, 37A45, 	28D05

\noindent \emph{Keywords: 
Expansions of real numbers,
Alternate bases,
Greedy algorithm, 
Lazy algorithm,
Measure theory,
Ergodic theory,
Dynamical systems
}

\bigskip
\hrule
\bigskip

\section{Introduction}

A representation of a nonnegative real number $x$ in a real base $\beta>1$ is an infinite sequence $a_0a_1a_2\cdots$ of nonnegative integers such that $x=\sum_{i=0}^\infty \frac{a_i}{\beta^{i+1}}$. Among all $\beta$-representations, the greedy and lazy ones play a special role. They can be generated by iterating the so-called greedy $\beta$-transformations $T_\beta$ and lazy $\beta$-transformations $L_\beta$ respectively. The dynamical properties of $T_\beta$ and $L_\beta$ are now well understood since the seminal works of Rényi~\cite{Renyi:1957} and Parry~\cite{Parry:1960}; for example, see \cite{Dajani&Kraaikamp:2002-2}.

In a recent work, the first two authors introduced the notion of expansions of real numbers in a real Cantor base, that is, an infinite sequence of real bases $\B=(\beta_n)_{n\ge 0}$ satisfying $\prod_{n=0}^\infty\beta_n=\infty$~\cite{Charlier&Cisternino:2020}. In this initial work, the focus was on the combinatorial properties of these expansions. In particular, generalizations of several combinatorial results of real base expansions were obtained, such as Parry's criterion for greedy $\beta$-expansions or Bertrand-Mathis characterization of sofic $\beta$-shifts. The latter result was obtained for the subclass of periodic Cantor bases, namely the alternate bases.

The aim of this paper is to study the dynamical behaviors of the greedy and lazy expansions in an alternate base $\B=(\beta_0,\ldots,\beta_{p-1},\beta_0,\ldots,\beta_{p-1},\ldots)$. It is organized as follows. In Section~\ref{Section : Preliminaries}, we provide the necessary background on measure theory and on expansions of real numbers in a real base. In Section~\ref{Section : AlternateBases}, we introduce the greedy and lazy alternate base expansions and define the associated transformations $T_{\B}$ and $L_{\B}$. Section~\ref{Section : InvariantMeasure} is concerned with the dynamical properties of the greedy transformation. We first prove the existence of a unique absolutely continuous (with respect to an extended Lebesgue measure, called the $p$-Lebesgue measure) $T_{\B}$-invariant measure and then prove that this measure is equivalent to the $p$-Lebesgue measure and that the corresponding dynamical system is ergodic. We then express the density of this measure and compute the frequencies of letters in the greedy $\B$-expansions. In Section~\ref{Section : IsoGreedyLazy} and~\ref{Section : Shift}, we prove that the greedy dynamical system is isomorphic to the lazy one, as well as to a suitable extension of the $\beta$-shift. In Section~\ref{Section : ComparisonBexpansionProduct}, we show that the $\B$-expansions can be seen as $(\beta_{p-1}\cdots \beta_0)$-representations over general digit sets and we compare both frameworks.

\section{Preliminaries}
\label{Section : Preliminaries}
\subsection{Measure preserving dynamical systems}
A \emph{probability space} is a triplet $(X,\mathcal{F},\mu)$ where $X$ is a set, $\mathcal{F}$ is a $\sigma$-algebra over $X$ and $\mu$ is a measure on $\mathcal{F}$ such that $\mu(X)=1$. For a measurable transformation $T \colon X \to X$ and a measure $\mu$ on $\mathcal{F}$, the measure $\mu$ is \emph{$T$-invariant}, or equivalently, the transformation $T\colon X\to X$ is \emph{measure preserving with respect to $\mu$}, if for all $B \in \mathcal{F}$, $\mu(T^{-1}(B))=\mu(B)$. A \emph{dynamical system} is a quadruple $(X,\mathcal{F}, \mu,T)$ where $(X,\mathcal{F}, \mu)$ is a probability space and $T\colon X\to X$ is a measure preserving transformation with respect to $\mu$. A dynamical system $(X,\mathcal{F}, \mu,T)$ is \emph{ergodic} if for all $B\in \mathcal{F}$, $T^{-1}(B)=B$ implies $\mu(B)\in\{0,1\}$, and is \emph{exact} if $\bigcap_{n=0}^{\infty} \{T^{-n}(B)\colon B\in\mathcal{F}\}$ only contains sets of measure $0$ or $1$. Clearly, any exact dynamical system is ergodic. Two dynamical systems $(X,\mathcal{F}_X, \mu_X,T_X)$ and $(Y,\mathcal{F}_Y, \mu_Y,T_Y)$ are \emph{(measure preservingly) isomorphic} if there exists a $\mu_X$-a.e.\ injective measurable map $\psi\colon X\to Y$ such that $\mu_Y=\mu_X\circ \psi^{-1}$ and $\psi\circ T_X=T_Y\circ \psi$ $\mu_X$-a.e. 

For two measures $\mu$ and $\nu$ on the same $\sigma$-algebra $\mathcal{F}$, $\mu$ is \emph{absolutely continuous with respect to} $\nu$ if for all $B\in \mathcal{F}$, $\nu(B)=0$ implies $\mu(B)=0$, and $\mu$ and $\nu$ are \emph{equivalent} if they are absolutely continuous with respect to each other. In what follows, we will be concerned by the Borel $\sigma$-algebras $\mathcal{B}(A)$, where $A\subset\R$. In particular, a measure on $\mathcal{B}(A)$ is \emph{absolutely continuous} if it is absolutely continuous with respect to the Lebesgue measure $\lambda$ restricted to $\mathcal{B}(A)$. The Radon-Nikodym theorem states that $\mu$ and $\nu$ are two probability measures such that $\mu$ is absolutely continuous with respect to $\nu$, then there exists a $\nu$-integrable map $f \colon X\mapsto [0,+\infty)$ such that for all $B\in\mathcal{F}$, $\mu(B)=\int_B f\, d\nu$. Moreover, the map $f$ is $\nu$-a.e.\ unique. It is called the \emph{density} of the measure $\mu$ with respect to $\nu$ and is usually denoted $\frac{d\mu}{d\nu}$. 

For more details on measure theory and ergodic theory, we refer the reader to~\cite{Boyarsky&Gora:1997,Dajani&Kraaikamp:2002,Furstenberg:1981}.

\subsection{Real base expansions}
\label{Section : RealBases}

Let $\beta$ be a real number greater than $1$. A \emph{$\beta$-representation} of a nonnegative real number $x$ is an infinite sequence $a_0a_1a_2\cdots$ over $\N$ such that $x=\sum_{i=0}^\infty \frac{a_i}{\beta^{i+1}}$. For $x\in [0,1)$, a particular $\beta$-representation of $x$, called the \emph{greedy $\beta$-expansion of $x$}, is obtained by using the \emph{greedy algorithm}. If the first $N$ digits of the $\beta$-expansion of $x$ are given by $a_0,\ldots,a_{N-1}$, then the next digit $a_N$ is the greatest integer in $[\![0,\ceil{\beta}-1]\!]$ such that 
\[
	\sum_{n=0}^N\frac{a_n}{\beta^{n+1}}\le x.
\]
The greedy $\beta$-expansion can also be obtained by iterating the \emph{greedy $\beta$-transformation} 
\[
	T_{\beta} \colon [0,1) \to [0,1),\ 
	x \mapsto \beta x - \floor{\beta x}
\]
by setting $a_n=\floor{\beta T_{\beta}^n(x)}$ for all $n\in\N$. 

\begin{example}
\label{Ex : Tbeta}
In this example and throughout the paper, $\varphi$ designates the golden ratio, i.e., $\varphi=\frac{1+\sqrt{5}}{2}$. The transformation $T_{\varphi^2}$ is depicted in Figure~\ref{Fig : Tbeta}.
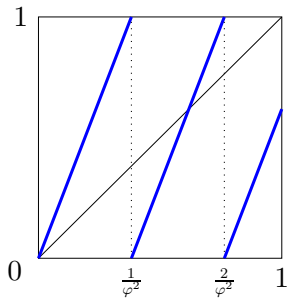
\begin{figure}[htb]
\centering
\begin{tikzpicture}[scale=0.8]
\draw[line width=0.0mm ] (0,0) rectangle (4,4); 
\draw[line width=0.0mm ] (0,0)  to (4,4); 
\draw (-0.1,-0.2) node[left]{$0$};
\draw (4,0) node[below]{$1$};
\draw (0,4) node[left]{$1$};
\draw[line width=0.4mm,blue] (0,0)  to (1.52786,4);
\draw[dotted] (1.52786,0) to (1.52786,4); 
\draw (1.52786,0) node[below]{$\smath{\tfrac{1}{\varphi^2}}$};
\draw[line width=0.4mm,blue](1.52786,0)  to (3.05573,4);
\draw[dotted] (3.05573,0) to (3.05573,4); 
\draw (3.05573,0) node[below]{$\smath{\tfrac{2}{\varphi^2}}$};
\draw[line width=0.4mm,blue] (3.05573,0)  to (4,2.47212);
\end{tikzpicture}
\caption{The transformation $T_{\varphi^2}$.}
\label{Fig : Tbeta}
\end{figure}
\end{example}

Real base expansions have been studied through various points of view. We refer the reader to~ \cite[Chapter 7]{Lothaire:2002} for a survey on their combinatorial properties and \cite{Dajani&Kraaikamp:2002} for a survey on their dynamical properties. A fundamental dynamical result is the following. This summarizes results from~\cite{Parry:1960,Renyi:1957,Rohlin:1961}.

\begin{theorem}
\label{Thm : uniqueMeasure}
There exists a unique $T_\beta$-invariant absolutely continuous probability measure $\mu_\beta$ on $\mathcal{B}([0,1))$. Furthermore, the measure $\mu_\beta$ is equivalent to the Lebesgue measure on $\mathcal{B}([0,1))$ and the dynamical system $([0,1),\mathcal{B}([0,1)),\mu_{\beta},T_{\beta})$ is ergodic and has entropy $\log(\beta)$.
\end{theorem}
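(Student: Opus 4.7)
The plan is to follow the classical Rényi--Parry--Rohlin strategy and assemble the four assertions as follows. For \emph{existence together with an explicit density}, I would verify Parry's formula directly. Define
\[
h_\beta(x) = \frac{1}{F(\beta)} \sum_{n \ge 0} \frac{1}{\beta^n}\, \mathbbm{1}_{[0, T_\beta^n(1))}(x),
\]
where $F(\beta)$ is the constant making $\int h_\beta \, d\lambda = 1$, and set $d\mu_\beta = h_\beta\, d\lambda$. I would check $T_\beta$-invariance by applying the Perron--Frobenius transfer operator $(\mathcal{L}f)(x) = \frac{1}{\beta}\sum_{y:\, T_\beta y = x} f(y)$ to $h_\beta$: a term-by-term telescoping using the identity $T_\beta(T_\beta^n(1)) = T_\beta^{n+1}(1)$ yields $\mathcal{L} h_\beta = h_\beta$ $\lambda$-a.e., which is equivalent to $T_\beta$-invariance of $\mu_\beta$. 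Rényi's bounds $1 \le F(\beta)\, h_\beta(x) \le 1/(1-1/\beta)$ are immediate from the definition, so $h_\beta$ is bounded above and bounded away from $0$, giving equivalence of $\mu_\beta$ and $\lambda$ on $\mathcal{B}([0,1))$.

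For \emph{ergodicity and uniqueness}, I would use a Knopp-type density argument on the greedy cylinders. On each cylinder $C = \Delta_{a_0 \cdots a_{n-1}}$ of rank $n$, the restriction $T_\beta^n|_C$ is affine with slope $\beta^n$, so $\lambda(C \cap T_\beta^{-n}(B)) = \beta^{-n}\, \lambda(T_\beta^n(C) \cap B)$. Since all but possibly the rightmost branch at each stage is a full branch with image $[0,1)$, a bounded distortion estimate produces a constant $c>0$ such that for every $n$-cylinder $C$ and every Borel set $B$,
\[
\lambda(C \cap T_\beta^{-n}(B)) \ge c\, \lambda(C)\, \lambda(B).
\]
If $B$ is $T_\beta$-invariant with $\lambda(B^c)>0$, picking a Lebesgue density point $x$ of $B^c$ and shrinking cylinders $C_n \ni x$ yields $\lambda(C_n \cap B)/\lambda(C_n) \to 0$, contradicting $\lambda(C_n \cap B)/\lambda(C_n) \ge c\, \lambda(B)$ unless $\lambda(B)=0$; hence $B$ has Lebesgue measure $0$ or $1$, and by equivalence the same holds for $\mu_\beta$. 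Uniqueness of the absolutely continuous invariant probability then follows from ergodicity: any two such measures are equivalent to $\lambda$, hence to each other, and must coincide.

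For the \emph{entropy}, the transformation $T_\beta$ is piecewise linear with $|T_\beta'| \equiv \beta$ wherever differentiable, and the greedy partition is generating under $\mu_\beta$, so the Rokhlin formula gives
\[
h_{\mu_\beta}(T_\beta) = \int_{[0,1)} \log|T_\beta'(x)|\, d\mu_\beta(x) = \log \beta.
\]
I expect the main obstacle to lie in the ergodicity step: the rightmost branch of $T_\beta$ is typically not full when $\beta \notin \N$, destroying the naive Bernoulli-like Markov structure, so the uniform lower bound on $\lambda(C \cap T_\beta^{-n}(B))$ requires careful bounded distortion control (or, equivalently, passage through Rohlin's natural extension) rather than a one-line full-branch computation.
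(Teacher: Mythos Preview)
The paper does not actually prove this theorem: it appears in the preliminaries with the sentence ``This summarizes results from~[Parry 1960, R\'enyi 1957, Rohlin 1961]'' standing in for a proof. Your sketch is precisely an outline of the arguments in those cited works---Parry's explicit density, R\'enyi's two-sided bounds giving equivalence with Lebesgue, a Knopp-type mixing estimate for ergodicity, and Rohlin's formula for the entropy---so you are reconstructing exactly what the paper defers to the literature.

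It is worth noting that when the paper later proves its own generalisation (the analogous statement for the composition $T_{\beta_{n-1}}\circ\cdots\circ T_{\beta_0}$), it takes a different route: existence, uniqueness, exactness, and the bounded decreasing shape of the density are all obtained at once by invoking a black-box result of Lasota and Yorke on piecewise convex interval maps; equivalence with Lebesgue then follows from positivity of the density, and the entropy again from constant slope. Your hands-on approach has the merit of being self-contained and of exhibiting the density explicitly; the paper's approach is shorter and transfers verbatim to compositions of several $\beta$-maps. Your closing caveat about the non-full rightmost branch is exactly right---the uniform lower bound $\lambda(C\cap T_\beta^{-n}B)\ge c\,\lambda(C)\lambda(B)$ fails for the short cylinders, and some care (or an appeal to R\'enyi's original bounded-variation argument) is needed there; this is also precisely the nuisance that the Lasota--Yorke machinery absorbs. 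One small slip in your uniqueness paragraph: you have only shown that \emph{your} $\mu_\beta$ is equivalent to $\lambda$, not that every absolutely continuous invariant measure is; the clean argument is rather that any other such $\nu$ satisfies $\nu\ll\lambda\sim\mu_\beta$, whence $d\nu/d\mu_\beta$ is $T_\beta$-invariant and therefore constant by ergodicity.
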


\begin{remark}
\label{Rem : NonSing}
It follows from Theorem~\ref{Thm : uniqueMeasure} that $T_{\beta}$ is \emph{non-singular with respect to the Lebesgue measure}, i.e., for all $B\in \mathcal{B}([0,1))$, $\lambda(B) = 0$ if and only if $\lambda(T_{\beta}^{-1}(B))=0$.  
\end{remark}

In what follows, we let 
\[
	x_\beta=\frac{\ceil{\beta}-1}{\beta-1}.
\] 
This value corresponds to the greatest real number that has a $\beta$-representation over the alphabet $[\![0,\ceil{\beta}-1]\!]$. Clearly, we have $x_\beta\ge 1$. The \emph{extended greedy $\beta$-transformation}, still denoted $T_\beta$, is defined in~\cite{Dajani&Kraaikamp:2002-2} as
\[
	T_{\beta}\colon [0,x_\beta) \to [0,x_\beta),\ 
	x\mapsto 
	\begin{cases}
	\beta x-\floor{\beta x}
	&\text{if } x\in[0,1)\\
	\beta x -(\ceil{\beta}-1) 	
	&\text{if }x\in[1,x_\beta).
\end{cases}
\]
Note that for all $x\in \big[\frac{\ceil{\beta}-1}{\beta},\frac{\ceil{\beta}}{\beta}\big)$, the two cases of the definition coincide since $\floor{\beta x}=\ceil{\beta}-1$. The extended $\beta$-transformation restricted to the interval $[0,1)$ gives back the classical greedy $\beta$-transformation defined above. Moreover, for all $x\in [0,x_\beta)$, there exists $N\in \N$ such that for all $n\ge N$, $T_\beta^n(x) \in [0,1)$. 

\begin{example}
We continue Example~\ref{Ex : Tbeta}. The extended greedy transformation $T_{\varphi^2}$ is depicted in Figure~\ref{Fig : TbetaExt}.
\begin{figure}[htb]
\centering
\begin{tikzpicture}[scale=1]
\draw[line width=0.0mm ] (0,0) rectangle (4.94427,4.94427); 
\draw[dotted, line width=0.3mm ] (0,4) to (4,4); 
\draw[dotted, line width=0.3mm ] (4,0) to (4,4); 
\draw[line width=0.0mm ] (0,0)  to (4.94427,4.94427); 
\draw (-0.1,-0.2) node[left]{$0$};
\draw (4,0) node[below]{$1$};
\draw (0,4) node[left]{$1$};
\draw[line width=0.4mm,blue] (0,0)  to (1.52786,4);
\draw[dotted] (1.52786,0) to (1.52786,4); 
\draw (1.52786,0) node[below]{$\smath{\tfrac{1}{\varphi^2}}$};
\draw[line width=0.4mm,blue](1.52786,0)  to (3.05573,4);
\draw[dotted] (3.05573,0) to (3.05573,4); 
\draw (3.05573,0) node[below]{$\smath{\tfrac{2}{\varphi^2}}$};
\draw[line width=0.4mm,blue] (3.05573,0)  to (4.94427,4.94427);
\draw (4.94427,0) node[below]{$\smath{\tfrac{2}{\varphi^2-1}}$};
\draw (0,4.94427) node[left] {$\smath{\tfrac{2}{\varphi^2-1}}$};
\end{tikzpicture}
\caption{The extended transformation $T_{\varphi^2}$.}
\label{Fig : TbetaExt}
\end{figure}
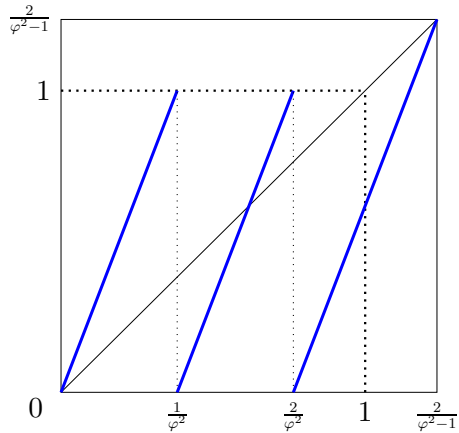
\end{example}

In the greedy algorithm, each digit is chosen as the largest possible among $0,1,\ldots,\ceil{\beta}-1$ at the considered position. At the other extreme, the \emph{lazy algorithm} picks the least possible digit at each step~\cite{Erdos&Joo&Komornik:1990}: if the first $N$ digits of the expansion of a real number $x\in(0,x_\beta]$ are given by $a_0,\ldots,a_{N-1}$, then the next digit $a_N$ is the least element in $[\![0,\ceil{\beta}-1]\!]$ such that 
\[
\sum_{n=0}^N\frac{a_n}{\beta^{n+1}}+\sum_{n=N+1}^{\infty}\frac{\ceil{\beta}-1}{\beta^{n+1}}\ge x,
\]
or equivalently,
\[
\sum_{n=0}^N\frac{a_n}{\beta^{n+1}}+\frac{x_\beta}{\beta^{N+1}}\ge x.
\]
The so-obtained $\beta$-representation is called the \emph{lazy $\beta$-expansion} of $x$. The \emph{lazy $\beta$-transfor\-mation} dynamically generating the lazy $\beta$-expansion is the transformation $L_{\beta}$ defined as follows~\cite{Dajani&Kraaikamp:2002}:
\[
	L_{\beta}\colon (0,x_\beta] \to (0,x_\beta],\ 
	x\mapsto 
	\begin{cases}
	\beta x &\text{if } x\in (0,x_\beta-1]\\
	\beta x -\ceil{\beta x -x_\beta} &\text{if }x\in (x_\beta-1,x_\beta].
\end{cases}
\]
Observe that for all $x\in
\big(\frac{x_\beta-1}{\beta},\frac{x_\beta}{\beta}\big]$, the two cases of the definition coincide since $\ceil{\beta x -x_\beta}=0$. Moreover, since $L_\beta\big((x_\beta-1,x_\beta]\big)=(x_\beta-1,x_\beta]$, the lazy transformation $L_\beta$ can be restricted to the length-one interval $(x_\beta-1,x_\beta]$. Also note that for all $x\in (0,x_\beta]$, there exists $N\in \N$ such that for all $n\ge N$, $L_\beta^n(x)\in(x_\beta-1,x_\beta]$. Furthermore, for all $x\in (x_\beta-1,x_\beta]$ and $n\in\N$, we have $a_n=\ceil{\beta L_{\beta}^n(x)-x_\beta}$.

\begin{example}
The lazy transformation $L_{\varphi^2}$ is depicted in Figure~\ref{Fig : Lbeta}.
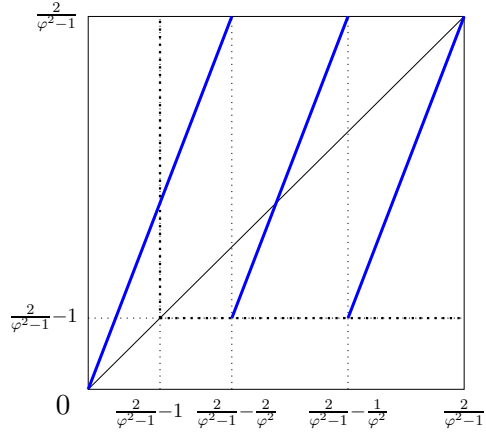
\begin{figure}[htb]
\centering
\begin{tikzpicture}[scale=1]
\draw[line width=0.0mm ] (0,0) rectangle (4.94427,4.94427); 
\draw[dotted, line width=0.3mm ]  (0.944272,0.944272) to (0.944272,4.94427); 
\draw[dotted, line width=0.3mm ]  (0.944272,0.944272)  to  (4.94427,0.944272); 
\draw[line width=0.0mm ] (0,0)  to (4.94427,4.94427); 
\draw[dotted]  (0.944272,0) to (0.944272,4.94427); 
\draw[dotted]  (0,0.944272)  to  (4.94427,0.944272); 
\draw (0,0.944272) node[left] {$\smath{\tfrac{2}{\varphi^2-1}{-}1}$};
\draw (0.8,0) node[below] {$\smath{\tfrac{2}{\varphi^2-1}{-}1}$};
\draw (-0.1,-0.2) node[left]{$0$};
\draw (4.94427,0) node[below]{$\smath{\tfrac{2}{\varphi^2-1}}$};
\draw (0,4.94427) node[left] {$\smath{\tfrac{2}{\varphi^2-1}}$};
\draw[line width=0.4mm,blue] (0,0)  to (1.88854,4.94427);
\draw[dotted] (1.88854,0) to (1.88854,4.94427); 
\draw (1.95,0) node[below]{$\smath{\tfrac{2}{\varphi^2-1}{-}\tfrac{2}{\varphi^2}}$};
\draw[line width=0.4mm,blue](1.88854,0.944272)  to (3.41641,4.94427);
\draw[dotted] (3.41641,0) to (3.41641,4.94427); 
\draw (3.41641,0) node[below]{$\smath{\tfrac{2}{\varphi^2-1}{-}\tfrac{1}{\varphi^2}}$};
\draw[line width=0.4mm,blue] (3.41641,0.944272)  to (4.94427,4.94427);
\end{tikzpicture}
\caption{The transformation $L_{\varphi^2}$.}
\label{Fig : Lbeta}
\end{figure}
\end{example}

It is proven in~\cite{Dajani&Kraaikamp:2002-2} that there is an isomorphism between the greedy 
and the lazy $\beta$-transformations. As a direct consequence of this property, an analogue of Theorem~\ref{Thm : uniqueMeasure} is obtained for the lazy transformation restricted to the interval $(x_\beta-1,x_\beta]$.

\section{Alternate base expansions}
\label{Section : AlternateBases}

Let $p$ be a positive integer and $\B=(\beta_0,\ldots,\beta_{p-1})$ be a $p$-tuple of real numbers greater than $1$. Such a $p$-tuple $\B$ is called an \emph{alternate base} and $p$ is called its \emph{length}. A \emph{$\B$-representation} of a nonnegative real number $x$ is an infinite sequence $a_0a_1a_2\cdots$ over $\N$ such that  
\begin{align}
\label{Eq : valueAlternatBase}
x\  =	
	&	\qquad\qquad\frac{a_0}{\beta_0} 
	&+	& \qquad\quad\frac{a_1}{\beta_1\beta_0} 
	&+	\quad \cdots \quad 
	&+	\quad\frac{a_{p-1}}{\beta_{p-1}\cdots\beta_0} \\
	&+	\quad\frac{a_p}{\beta_0(\beta_{p-1}\cdots\beta_0)}
    &+	&\quad \frac{a_{p+1}}{\beta_1\beta_0(\beta_{p-1}\cdots\beta_0)}
 	&+	\quad \cdots \quad 
 	&+	\;\ \frac{a_{2p-1}}{(\beta_{p-1}\cdots\beta_0)^2} \nonumber \\
 	&+	\qquad \cdots  \nonumber 
\end{align} 	
We use the convention that for all $n\in \Z$, $\beta_n=\beta_{n \bmod p}$ and $\B^{(n)}=(\beta_n,\ldots,\beta_{n+p-1})$. Therefore, the equality~\eqref{Eq : valueAlternatBase} can be rewritten
\[
	x=\sum_{n=0}^{+\infty}\frac{a_n}{\prod_{k=0}^n\beta_k}.
\]
The alternate bases are particular cases of Cantor real bases, which were introduced and studied in~\cite{Charlier&Cisternino:2020}.

In this paper, our aim is to study the dynamics behind some distinguished representation in alternate bases, namely the greedy and lazy $\B$-expansions. First, we recall the notion of greedy $\B$-expansions defined in~\cite{Charlier&Cisternino:2020} and we introduce the greedy $\B$-transformation dynamically generating the digits of the greedy $\B$-expansions. Second, we introduce the notion of lazy $\B$-expansions and the corresponding lazy $\B$-transformation.

\subsection{The greedy $\B$-expansion}
\label{Sec : Greedy}

For $x\in[0,1)$, a distinguished $\B$-representation, called the \emph{greedy $\B$-expansion} of $x$, is obtained from the \emph{greedy algorithm}. If the first $N$ digits of the greedy $\B$-expansion of $x$ are given by $a_0,\ldots,a_{N-1}$, then the next digit $a_N$ is the greatest integer in $[\![0,\ceil{\beta_N}-1]\!]$ such that 
\[
	\sum_{n=0}^N	\frac{a_n}{\prod_{k=0}^n\beta_k}\le x.
\]
The greedy $\B$-expansion can also be obtained by alternating the $\beta_i$-transformations: for all $x\in[0,1)$ and $n\in\N$, $a_n=\floor{\beta_n\big(T_{\beta_{n-1}}\circ \cdots \circ T_{\beta_0}(x)\big)}$. The greedy $\B$-expansion of $x$ is denoted $d_{\B}(x)$. In particular, if $p=1$ then it corresponds to the usual greedy $\beta$-expansion as defined in Section~\ref{Section : RealBases}. 

\begin{example}
\label{Ex : TbetaRacine13}
Consider the alternate base $\B=(\frac{1+\sqrt{13}}{2}, \frac{5+\sqrt{13}}{6})$ already studied in~\cite{Charlier&Cisternino:2020}. The greedy $\B$-expansions are obtained by alternating the transformations $T_{\frac{1+\sqrt{13}}{2}}$ and $T_{ \frac{5+\sqrt{13}}{6}}$, which are both depicted in Figure~\ref{Fig : TbetaRacine13}. Moreover, in Figure~\ref{Fig : First5DigitsTbetaRacine13} we see the computation of the first five digits of the greedy $\B$-expansion of $\frac{1+\sqrt{5}}{5}$. 
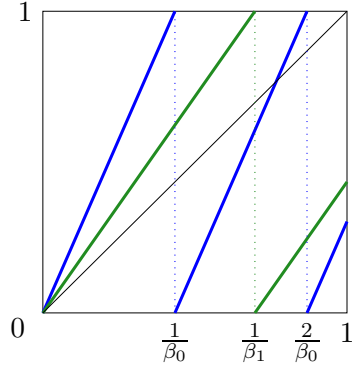
\begin{figure}[htb]
\begin{tikzpicture}
\draw[line width=0.0mm ] (0,0) rectangle (4,4); 
\draw (-0.1,-0.2) node[left]{$0$};
\draw (4,0) node[below]{$1$};
\draw (0,4) node[left]{$1$};
\draw (1.73703,0) node[below]{$\frac{1}{\beta_0}$};
\draw (3.47407,0) node[below]{$\frac{2}{\beta_0}$};
\draw[line width=0.4mm,blue] (0,0) to (1.73703,4); 
\draw[line width=0.4mm,blue] (1.73703,0)  to (3.47407,4); 
\draw[line width=0.4mm,blue] (3.47407,0)  to (4,1.2111); 
\draw (2.7889,0) node[below]{$\frac{1}{\beta_1}$};
\draw[line width=0.4mm,MyGreen] (0,0) to (2.7889,4); 
\draw[line width=0.4mm,MyGreen] (2.7889,0)  to (4,1.73703); 
\draw[line width=0.0mm ] (0,0)  to (4,4); 
\draw[blue,dotted] (1.73703,0)   to (1.73703,4) ; 
\draw[blue,dotted] (3.47407,0)   to (3.47407,4) ; 
\draw[MyGreen,dotted] (2.7889,0)   to (2.7889,4) ; 
\end{tikzpicture}
\caption{The transformations $T_{\frac{1+\sqrt{13}}{2}}$ (blue) and $T_{\frac{5+\sqrt{13}}{6}}$ (green).}
\label{Fig : TbetaRacine13}
\end{figure}
\begin{figure}[htb]
\begin{minipage}{.2\linewidth}
\begin{tikzpicture}[scale=0.6]
\draw[line width=0.0mm ] (0,0) rectangle (4,4); 
\draw[blue] (1.73703/2,4) node[above]{$0$};
\draw[blue] (2.60555,4) node[above]{$1$};
\draw[blue] (3.73704,4) node[above]{$2$};
\draw[line width=0.4mm,blue] (0,0) to (1.73703,4); 
\draw[line width=0.4mm,blue] (1.73703,0)  to (3.47407,4); 
\draw[line width=0.4mm,blue] (3.47407,0)  to (4,1.2111); 
\draw[line width=0.05mm,MyGreen] (0,0) to (2.7889,4); 
\draw[line width=0.05mm,MyGreen] (2.7889,0)  to (4,1.73703); 
\draw[line width=0.0mm ] (0,0)  to (4,4); 
\draw[line width=0.4mm,blue,dotted] (1.73703,0)   to (1.73703,4) ; 
\draw[line width=0.4mm,blue,dotted] (3.47407,0)   to (3.47407,4) ; 
\draw[MyGreen,dotted ] (2.7889,0)   to (2.7889,4) ; 
\filldraw[red] (2.58885,0) circle (3pt);
\draw[line width=0.4mm,red] (2.60555,4.42) circle [radius=0.3] node {$ $};
\draw[line width=0.4mm,blue] (2.60555,4) node[above]{$1$};
\draw[line width=0.4mm,red] (2.58885,0) to (2.58885,1.96155); 
\draw[line width=0.4mm,red] (1.96155,1.96155)to (2.58885,1.96155); 
\draw[line width=0.4mm,red] (1.96155,1.96155) to (1.96155,0);
\end{tikzpicture}
\end{minipage}%
\begin{minipage}{.2\linewidth}
\begin{tikzpicture}[scale=0.6]
\draw[line width=0.0mm ] (0,0) rectangle (4,4); 
\draw[MyGreen] (2.7889/2,4) node[above]{$0$};
\draw[MyGreen] (3.39445,4) node[above]{$1$};
\draw[line width=0.05mm,blue] (0,0) to (1.73703,4); 
\draw[line width=0.05mm,blue] (1.73703,0)  to (3.47407,4); 
\draw[line width=0.05mm,blue] (3.47407,0)  to (4,1.2111); 
\draw[line width=0.4mm,MyGreen] (0,0) to (2.7889,4); 
\draw[line width=0.4mm,MyGreen] (2.7889,0)  to (4,1.73703); 
\draw[line width=0.0mm ] (0,0)  to (4,4); 
\draw[blue,dotted] (1.73703,0)   to (1.73703,4) ; 
\draw[blue,dotted] (3.47407,0)   to (3.47407,4) ; 
\draw[line width=0.4mm,MyGreen,dotted ] (2.7889,0)   to (2.7889,4) ; 
\draw[line width=0.4mm,red] (2.7889/2,4.42) circle [radius=0.3] node {$ $};
\draw[MyGreen] (2.7889/2,4) node[above]{$0$};
\filldraw[red] (2.58885,0) circle (1.5pt);
\draw[line width=0.05mm,red] (2.58885,0) to (2.58885,1.96155); 
\draw[line width=0.05mm,red] (1.96155,1.96155)to (2.58885,1.96155); 
\draw[line width=0.05mm,red] (1.96155,1.96155) to (1.96155,0);
\draw[line width=0.4mm,red] (1.96155,2.81337) to (1.96155,0); 
\draw[line width=0.4mm,red] (1.96155,2.81337) to (2.81337,2.81337); 
\draw[line width=0.4mm,red] (2.81337,2.81337)to (2.81337,0); 
\filldraw[red] (1.96155,0)circle (3pt);
\end{tikzpicture}
\end{minipage}%
\begin{minipage}{.2\linewidth}
\begin{tikzpicture}[scale=0.6]
\draw[line width=0.0mm ] (0,0) rectangle (4,4); 
\draw[blue] (1.73703/2,4) node[above]{$0$};
\draw[blue] (2.60555,4) node[above]{$1$};
\draw[blue] (3.73704,4) node[above]{$2$};
\draw[line width=0.4mm,blue] (0,0) to (1.73703,4); 
\draw[line width=0.4mm,blue] (1.73703,0)  to (3.47407,4); 
\draw[line width=0.4mm,blue] (3.47407,0)  to (4,1.2111); 
\draw[line width=0.05mm,MyGreen] (0,0) to (2.7889,4); 
\draw[line width=0.05mm,MyGreen] (2.7889,0)  to (4,1.73703); 
\draw[line width=0.0mm ] (0,0)  to (4,4); 
\draw[line width=0.4mm,blue,dotted] (1.73703,0)   to (1.73703,4) ; 
\draw[line width=0.4mm,blue,dotted] (3.47407,0)   to (3.47407,4) ; 
\draw[MyGreen,dotted ] (2.7889,0)   to (2.7889,4) ; 
\filldraw[red] (2.58885,0) circle (1.5pt);
\draw[line width=0.4mm,red] (2.60555,4.42) circle [radius=0.3] node {$ $};
\draw[blue] (2.60555,4) node[above]{$1$};
\draw[line width=0.05mm,red] (2.58885,0) to (2.58885,1.96155); 
\draw[line width=0.05mm,red] (1.96155,1.96155)to (2.58885,1.96155); 
\draw[line width=0.05mm,red] (1.96155,1.96155) to (1.96155,0);
\draw[line width=0.05mm,red] (1.96155,2.81337) to (1.96155,0); 
\filldraw[red] (1.96155,0)circle (1.5pt);
\draw[line width=0.05mm,red] (1.96155,2.81337) to (2.81337,2.81337); 
\draw[line width=0.05mm,red] (2.81337,2.81337)to (2.81337,0); 
\draw[line width=0.4mm,red] (2.81337,2.47856)to (2.81337,0); 
\filldraw[red] (2.81337,0)circle (3pt);
\draw[line width=0.4mm,red] (2.81337,2.47856)to (2.47856,2.47856); 
\draw[line width=0.4mm,red] (2.47856,0)to (2.47856, 2.47856); 
\end{tikzpicture}
\end{minipage}%
\begin{minipage}{.2\linewidth}
\begin{tikzpicture}[scale=0.6]
\draw[line width=0.0mm ] (0,0) rectangle (4,4); 
\draw[MyGreen] (2.7889/2,4) node[above]{$0$};
\draw[MyGreen] (3.39445,4) node[above]{$1$};
\draw[line width=0.05mm,blue] (0,0) to (1.73703,4); 
\draw[line width=0.05mm,blue] (1.73703,0)  to (3.47407,4); 
\draw[line width=0.05mm,blue] (3.47407,0)  to (4,1.2111); 
\draw[line width=0.4mm,MyGreen] (0,0) to (2.7889,4); 
\draw[line width=0.4mm,MyGreen] (2.7889,0)  to (4,1.73703); 
\draw[line width=0.0mm ] (0,0)  to (4,4); 
\draw[blue,dotted] (1.73703,0)   to (1.73703,4) ; 
\draw[blue,dotted] (3.47407,0)   to (3.47407,4) ; 
\draw[line width=0.4mm,MyGreen,dotted ] (2.7889,0)   to (2.7889,4) ; 
\draw[line width=0.4mm,red] (2.7889/2,4.42) circle [radius=0.3] node {$ $};
\draw[MyGreen] (2.7889/2,4) node[above]{$0$};
\filldraw[red] (2.58885,0) circle (1.5pt);
\draw[line width=0.05mm,red] (2.58885,0) to (2.58885,1.96155); 
\draw[line width=0.05mm,red] (1.96155,1.96155)to (2.58885,1.96155); 
\draw[line width=0.05mm,red] (1.96155,1.96155) to (1.96155,0);
\filldraw[red] (1.96155,0)circle (1.5pt);
\draw[line width=0.05mm,red] (1.96155,2.81337) to (1.96155,0); 
\draw[line width=0.05mm,red] (1.96155,2.81337) to (2.81337,2.81337); 
\draw[line width=0.05mm,red] (2.81337,2.81337)to (2.81337,0); 
\draw[line width=0.05mm,red] (2.81337,2.47856)to (2.81337,0); 
\filldraw[red] (2.81337,0)circle (1.5pt);
\draw[line width=0.05mm,red] (2.81337,2.47856)to (2.47856,2.47856); 
\draw[line width=0.05mm,red] (2.47856,0)to (2.47856, 2.47856); 
\filldraw[red] (2.47856,0)circle (3pt);
\draw[line width=0.4mm,red] (2.47856,0)to (2.47856, 3.5549); 
\draw[line width=0.4mm,red] (2.47856, 3.5549) to(3.5549,3.5549) ; 
\draw[line width=0.4mm,red] (3.5549,3.5549) to (3.5549,0); 
\end{tikzpicture}
\end{minipage}%
\begin{minipage}{.2\linewidth}
\begin{tikzpicture}[scale=0.6]
\draw[line width=0.0mm ] (0,0) rectangle (4,4); 
\draw[blue] (1.73703/2,4) node[above]{$0$};
\draw[blue] (2.60555,4) node[above]{$1$};
\draw[blue] (3.73704,4) node[above]{$2$};
\draw[line width=0.4mm,blue] (0,0) to (1.73703,4); 
\draw[line width=0.4mm,blue] (1.73703,0)  to (3.47407,4); 
\draw[line width=0.4mm,blue] (3.47407,0)  to (4,1.2111); 
\draw[line width=0.05mm,MyGreen] (0,0) to (2.7889,4); 
\draw[line width=0.05mm,MyGreen] (2.7889,0)  to (4,1.73703); 
\draw[line width=0.0mm ] (0,0)  to (4,4); 
\draw[line width=0.4mm,blue,dotted] (1.73703,0)   to (1.73703,4) ; 
\draw[line width=0.4mm,blue,dotted] (3.47407,0)   to (3.47407,4) ; 
\draw[MyGreen,dotted ] (2.7889,0)   to (2.7889,4) ; 
\draw[line width=0.4mm,red] (3.73704,4.42) circle [radius=0.3] node {$ $};
\draw[blue] (3.73704,4) node[above]{$2$};
\filldraw[red] (2.58885,0) circle (1.5pt);
\draw[line width=0.05mm,red] (2.58885,0) to (2.58885,1.96155); 
\draw[line width=0.05mm,red] (1.96155,1.96155)to (2.58885,1.96155); 
\draw[line width=0.05mm,red] (1.96155,1.96155) to (1.96155,0);
\filldraw[red] (1.96155,0)circle (1.5pt);
\draw[line width=0.05mm,red] (1.96155,2.81337) to (1.96155,0); 
\draw[line width=0.05mm,red] (1.96155,2.81337) to (2.81337,2.81337); 
\draw[line width=0.05mm,red] (2.81337,2.81337)to (2.81337,0); 
\draw[line width=0.05mm,red] (2.81337,2.47856)to (2.81337,0); 
\filldraw[red] (2.81337,0)circle (1.5pt);
\draw[line width=0.05mm,red] (2.81337,2.47856)to (2.47856,2.47856); 
\draw[line width=0.05mm,red] (2.47856,0)to (2.47856, 2.47856); 
\filldraw[red] (2.47856,0)circle (1.5pt);
\draw[line width=0.05mm,red] (2.47856,0)to (2.47856, 3.5549); 
\draw[line width=0.05mm,red] (2.47856, 3.5549) to (3.5549,3.5549) ; 
\draw[line width=0.05mm,red] (3.5549,3.5549) to (3.5549,0); 
\filldraw[red] (3.5549,0) circle (3pt);
\draw[line width=0.4mm,red] (3.5549,0) to (3.5549,0.186135); 
\end{tikzpicture}
\end{minipage}
\caption{The first five digits of the greedy $\B$-expansion of $\frac{1+\sqrt{5}}{5}$ are $10102$ for $\B=(\frac{1+\sqrt{13}}{2}, \frac{5+\sqrt{13}}{6})$.}
\label{Fig : First5DigitsTbetaRacine13}
\end{figure}
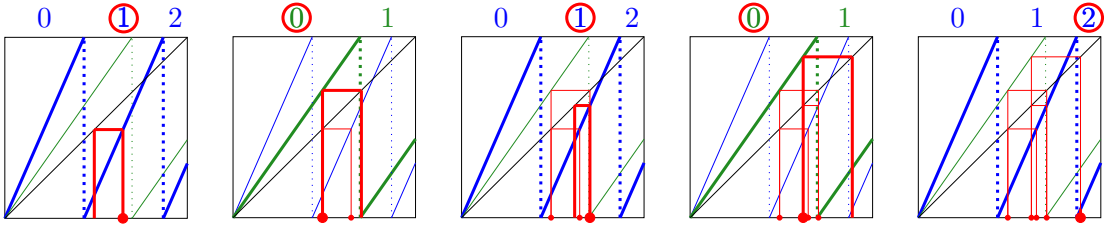
\end{example}

We now define the \emph{greedy $\B$-transformation} by
\begin{equation}
\label{Eq : TB}
	T_{\B}\colon 
	\Int\times [0,1) \to \Int\times [0,1),\ 
	(i,x) \mapsto \big((i+1)\bmod p, T_{\beta_i}(x)\big).
\end{equation}
The greedy $\B$-transformation generates the digits of the greedy $\B$-expansions as follows. For all $x\in[0,1)$ and $n\in \N$, the digit $a_n$ of $d_{\B}(x)$ is equal to $\floor{\beta_n\pi_2\big(T_{\B}^n(0,x)\big)}$ where 
\[
	\pi_2\colon\N\times \R\to \R,\ (n,x)\mapsto x. 
\]

As in Section~\ref{Section : RealBases}, the greedy $\B$-transformation can be extended to an interval of real numbers bigger than $[0,1)$. To do so, we define 
\begin{equation}
\label{Eq : x_B}
	x_{\B}=\sum_{n=0}^{\infty}\frac{\ceil{\beta_n}-1}{\prod_{k=0}^n\beta_k}.
\end{equation}
It can be easily seen that $1\le x_{\B}<\infty$. This value corresponds to the greatest real number that has a $\B$-representation $a_0a_1a_2\cdots$ such that each letter $a_n$ belongs to the alphabet $[\![0,\ceil{\beta_n}-1]\!]$. Moreover, for all $n\in\Z$, 
\begin{equation}
\label{Eq : EqualitiesXB}
	x_{\B^{(n)}}=\frac{x_{\B^{(n+1)}}+\ceil{\beta_n}-1}{\beta_n}.
\end{equation}
We define the \emph{extended greedy $\B$-transformation}, still denoted $T_{\B}$, by
\begin{align*}
T_{\B}\colon &\bigcup_{i=0}^{p-1} \big(\{i\} \times [0 , x_{\B^{(i)}} ) \big)\to 
\bigcup_{i=0}^{p-1} \big(\{i\} \times [0 , x_{\B^{(i)}}) \big),\\
 &(i,x) \mapsto 
 \begin{cases}
 \big((i+1)\bmod p,\beta_i x -\floor{\beta_i x}\big) 	
 & \text{if } x \in [0,1)\\
 \big((i+1) \bmod p, \beta_i x-(\ceil{\beta_i}-1)\big) 	
 & \text{if } x \in[1,x_{\B^{(i)}}).
 \end{cases}
\end{align*}
The greedy $\B$-expansion of $x\in [0,x_{\B})$ is obtained by alternating the $p$ maps 
\[
	{\pi_2 \circ T_{\B} \circ \delta_i}_{\big| [0,x_{\B^{(i)}})}
	\colon [0,x_{\B^{(i)}})\to [0,x_{\B^{(i+1)}})
\] 
for $i \in \Int$, where 
\[
	\delta_i\colon
	\R\to\{i\}\times\R,\ 
	x\mapsto (i,x).
\]

\begin{proposition}
\label{Pro : LexMax}
For all $x\in[0,x_{\B})$ and $n\in \N$, we have
\[
	\pi_2 \circ T_{\B}^n \circ \delta_0 (x)
	=\beta_{n-1}\cdots\beta_0 x
	-\sum_{k=0}^{n-1} \beta_{n-1}\cdots \beta_{k+1}c_k	
\]
where $(c_0,\ldots,c_{n-1})$ is the lexicographically greatest $n$-tuple in $\prod_{k=0}^{n-1}\ [\![0,\ceil{\beta_k}-1]\!]$ such that $\frac{\sum_{k=0}^{n-1} \beta_{n-1}\cdots \beta_{k+1}c_k}{\beta_{n-1}\cdots\beta_0} \le x$. 
\end{proposition}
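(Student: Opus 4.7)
\medskip
\noindent\emph{Proof plan.} I would argue by induction on $n$, the base case $n=0$ being immediate since $\pi_2\circ\delta_0(x)=x$ and the empty product/sum conventions make both sides equal $x$, while the unique empty tuple is trivially lex greatest.

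For the inductive step, assume the statement for $n$, and let $y=\pi_2\circ T_{\B}^n\circ\delta_0(x)=\beta_{n-1}\cdots\beta_0 x-\sum_{k=0}^{n-1}\beta_{n-1}\cdots\beta_{k+1}c_k$. Since $T_{\B}^n\circ\delta_0(x)=(n\bmod p,y)$ with $y\in[0,x_{\B^{(n)}})$, the definition of the extended greedy $\B$-transformation yields $\pi_2\circ T_{\B}^{n+1}\circ\delta_0(x)=\beta_n y-c_n$, where $c_n=\floor{\beta_n y}$ if $y\in[0,1)$ and $c_n=\ceil{\beta_n}-1$ if $y\in[1,x_{\B^{(n)}})$. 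In either case a quick check (using $\beta_n y<\beta_n$ in the first case and the definition of the alphabet in the second) shows $c_n\in[\![0,\ceil{\beta_n}-1]\!]$. Substituting the expression for $y$ and multiplying through by $\beta_n$ gives the desired algebraic identity
\[
\pi_2\circ T_{\B}^{n+1}\circ\delta_0(x)=\beta_n\cdots\beta_0 x-\sum_{k=0}^{n}\beta_n\cdots\beta_{k+1}c_k,
\]
with the convention that the product $\beta_n\cdots\beta_{n+1}$ is empty, i.e.\ equal to $1$.

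It remains to verify lexicographic maximality of $(c_0,\ldots,c_n)$. Suppose $(d_0,\ldots,d_n)\in\prod_{k=0}^n[\![0,\ceil{\beta_k}-1]\!]$ is lexicographically strictly greater than $(c_0,\ldots,c_n)$ and satisfies $\sum_{k=0}^n d_k\beta_n\cdots\beta_{k+1}\le\beta_n\cdots\beta_0 x$. I would split into two cases. If $(d_0,\ldots,d_{n-1})>_{\lex}(c_0,\ldots,c_{n-1})$, then by the induction hypothesis $\sum_{k=0}^{n-1}d_k\beta_{n-1}\cdots\beta_{k+1}>\beta_{n-1}\cdots\beta_0 x$, and multiplying by $\beta_n$ and adding $d_n\ge 0$ contradicts the assumed inequality. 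If instead $d_k=c_k$ for $k<n$ and $d_n>c_n$, then the assumed inequality reduces to $d_n\le\beta_n y$; but in the case $y\in[0,1)$ we have $d_n\ge c_n+1=\floor{\beta_n y}+1>\beta_n y$, and in the case $y\in[1,x_{\B^{(n)}})$ we already have $d_n\le\ceil{\beta_n}-1=c_n$, both contradictions.

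The main obstacle is the lex-maximality argument in the first case: one has to exploit the induction hypothesis in its contrapositive form, namely that any lexicographically greater $n$-tuple strictly exceeds $\beta_{n-1}\cdots\beta_0 x$ (not merely fails to be produced by the algorithm), which is why stating the induction hypothesis with the correct inequality from the start is essential.
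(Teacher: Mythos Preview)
Your proof is correct and follows essentially the same inductive route as the paper's: define $c_n$ via the extended transformation, derive the algebraic identity, then argue lex-maximality by contradiction with the same two-case split. The only variation is that in the first case you invoke the level-$n$ induction hypothesis directly in contrapositive form, whereas the paper first records that every prefix $(c_0,\ldots,c_m)$ is itself lex-maximal at level $m{+}1$ and uses that prefix property; your version is slightly more direct.
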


\begin{proof}
We proceed by induction on $n$. The base case $n=0$ is immediate. Now, suppose that the result is satisfied for some $n\in\N$. Let $x\in[0,x_{\B})$. Let $(c_0,\ldots,c_{n-1})$ is the lexicographically greatest $n$-tuple in $\prod_{k=0}^{n-1}\ [\![0,\ceil{\beta_k}-1]\!]$  such that $\frac{\sum_{k=0}^{n-1} \beta_{n-1}\cdots \beta_{k+1}c_k}{\beta_{n-1}\cdots\beta_0} \le x$. Then it is easily seen that for all $m< n$, $(c_0,\ldots,c_m)$ is the lexicographically greatest $(m+1)$-tuple in $\prod_{k=0}^m\ [\![0,\ceil{\beta_k}-1]\!]$ such that $\frac{\sum_{k=0}^m \beta_m\cdots \beta_{k+1}c_k}{\beta_m\cdots\beta_0} \le x$. Now, set $y=\pi_2\circ T_{\B}^n \circ \delta_0 (x)$. Then $y\in[0,x_{\B^{(n)}})$ and by induction hypothesis, we obtain that $y=\beta_{n-1}\cdots\beta_0 x-\sum_{k=0}^{n-1} \beta_{n-1}\cdots \beta_{k+1}c_k$. Then, by setting  
\[
	c_n=
	\begin{cases}
	\floor{\beta_n y} &\text{if } y\in[0,1)\\
	\ceil{\beta_n}-1 &\text{if } y\in[1,x_{\B^{(n)}})
	\end{cases}
\]
we obtain that $\pi_2\circ T_{\B}^{n+1} \circ \delta_0 (x)
	=\beta_n\cdots \beta_0 x-\sum_{k=0}^n\beta_n\cdots \beta_{k +1}c_k$. In order to conclude, we have to show that 
\begin{enumerate}[label=\alph*)]
\item \label{Eq : a} $\frac{\sum_{k=0}^n\beta_n\cdots \beta_{k +1}c_k}{\beta_n\cdots \beta_0}\le x$
\item \label{Eq : b} $(c_0,\ldots,c_n)$ is the lexicographically greatest $(n+1)$-tuple in $\prod_{k=0}^n\ [\![0,\ceil{\beta_k}-1]\!]$ such that~\ref{Eq : a} holds.
\end{enumerate}

By definition of $c_n$, we have $c_n\le \beta_n y$. Therefore,
\[
	\sum_{k=0}^n\beta_n\cdots \beta_{k +1}c_k
	=\beta_n\sum_{k=0}^{n-1}\beta_{n-1}\cdots \beta_{k+1}c_k+c_n\\
	= \beta_n(\beta_{n-1}\cdots\beta_0x-y)+c_n\\
	\le\beta_n\cdots \beta_0x.
\]
This shows that~\ref{Eq : a} holds.

Let us show ~\ref{Eq : b} by contradiction. Suppose that there exists $(c_0',\ldots,c_n')\in \prod_{k=0}^n\ [\![0,\ceil{\beta_k}-1]\!]$ such that $ (c_0',\ldots,c_n')>_\lex(c_0,\ldots,c_n)$ and $\frac{\sum_{k=0}^n\beta_n\cdots \beta_{k +1}c_k'}{\beta_n\cdots \beta_0}\le  x$. Then there exists $m\le n$ such that $c_0'=c_0,\ldots,c_{m-1}'=c_{m-1}$ and $c_m'\ge c_m+1$. We again consider two cases. First, suppose that $m<n$. Since $(c_0',\ldots,c_m')>_\lex(c_0,\ldots,c_m)$, we get $\frac{\sum_{k=0}^m\beta_m\cdots \beta_{k+1}c_k'}{\beta_m\cdots \beta_0}> x$. But then 
\[
	\sum_{k=0}^n\beta_n\cdots \beta_{k+1}c_k'
\ge \beta_n\cdots \beta_{m+1}\sum_{k=0}^m\beta_m\cdots \beta_{k+1}c_k'
>\beta_n\cdots \beta_0 x,
\] 
a contradiction. Second, suppose that $m=n$. Then 
\[	
	\beta_n\cdots \beta_0 x
	\ge \sum_{k=0}^n\beta_n\cdots \beta_{k+1}c_k'\\
	\ge \sum_{k=0}^{n-1}\beta_n\cdots \beta_{k+1}c_k+c_n+1,
\]
hence $\beta_n y\ge c_n+1$. If $y\in[0,1)$ then $c_n+1=\floor{\beta_ny}+1>\beta_ny$, a contradiction. Otherwise, $y\in[1,x_{\B^{(n)}})$ and $c_n+1=\ceil{\beta_n}$. But then $c_n'\ge \ceil{\beta_n}$, which is impossible since $c_n'\in[\![0,\ceil{\beta_n}-1]\!]$. This shows~\ref{Eq : b} and ends the proof.
\end{proof}

The restriction of the extended greedy $\B$-transformation to the domain $\Int \times[0,1)$ gives back the greedy $\B$-transformation initially defined in~\ref{Eq : TB}. Moreover, for all $(i,x)\in\bigcup_{i=0}^{p-1}\big(\{i\}\times[0,x_{\B^{(i)}})\big)$, there exists $N \in \N$ such that for all $n\ge N$, $T_{\B}^n(i,x)\in \Int \times [0,1)$. 

\begin{example}
\label{Ex : TbetaRacine13SecondPart}
Let $\B=(\frac{1+\sqrt{13}}{2}, \frac{5+\sqrt{13}}{6})$ be the alternate base of Example~\ref{Ex : TbetaRacine13}. The maps ${\pi_2 \circ T_{\B} \circ \delta_0}_{\big| [0,x_{\B})}\colon  [0,x_{\B})\to  [0,x_{\B^{(1)}})$ and ${\pi_2 \circ T_{\B} \circ \delta_1}_{\big| [0,x_{\B^{(1)}})}\colon [0,x_{\B^{(1)}})\to  [0,x_{\B})$ are depicted in Figure~\ref{Fig : TbetaRacine13Extended}.
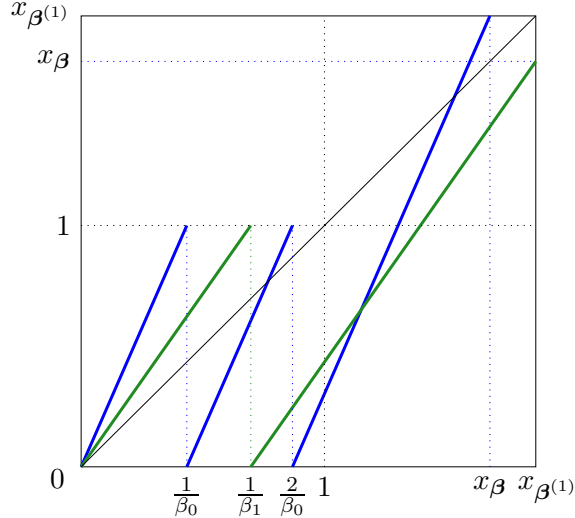
\begin{figure}[htb]
\begin{tikzpicture}[scale=0.8]
\draw[line width=0.0mm ] (0,0) rectangle (7.47408,7.47408); 
\draw (-0.1,-0.2) node[left]{$0$};
\draw (4,0) node[below]{$1$};
\draw (0,4) node[left]{$1$};
\draw (7.47408+0.2,0) node[below]{$x_{\B^{(1)}}$};
\draw (0,7.47408) node[left]{$x_{\B^{(1)}}$};
\draw (6.71976,0) node[below]{$x_{\B}$};
\draw (0,6.71976) node[left]{$x_{\B}$};
\draw (1.73703,0) node[below]{$\frac{1}{\beta_0}$};
\draw (3.47407,0) node[below]{$\frac{2}{\beta_0}$};
\draw[line width=0.4mm,blue] (0,0) to (1.73703,4); 
\draw[line width=0.4mm,blue] (1.73703,0)  to (3.47407,4); 
\draw[line width=0.4mm,blue] (3.47407,0)  to (6.71976,7.47408); 
\draw (2.7889,0) node[below]{$\frac{1}{\beta_1}$};
\draw[line width=0.4mm,MyGreen] (0,0) to (2.7889,4); 
\draw[line width=0.4mm,MyGreen] (2.7889,0)  to (7.47408,6.71976); 
\draw[blue,dotted] (6.71976,0)   to (6.71976,7.47408) ; 
\draw[blue,dotted] (0,6.71976)   to (7.47408,6.71976) ; 
\draw[line width=0.0mm ] (0,0)  to  (7.47408,7.47408); 
\draw[blue,dotted] (1.73703,0)   to (1.73703,4) ; 
\draw[blue,dotted] (3.47407,0)   to (3.47407,4) ; 
\draw[MyGreen,dotted] (2.7889,0)   to (2.7889,4) ; 
\draw[dotted] (0,4)   to (7.47408,4) ; 
\draw[dotted] (4,0)   to (4,7.47408) ; 
\end{tikzpicture}
\caption{
The maps ${\pi_2 \circ T_{\B} \circ \delta_0}_{\big| [0,x_{\B})}$ (blue) and ${\pi_2 \circ T_{\B} \circ \delta_1}_{\big| [0,x_{\B^{(1)}})}$ (green) with $\B=(\frac{1+\sqrt{13}}{2}, \frac{5+\sqrt{13}}{6})$.}
\label{Fig : TbetaRacine13Extended}
\end{figure}
\end{example}

\subsection{The lazy $\B$-expansion}
\label{Sec : Lazy}

As in the real base case, in the greedy $\B$-expansion, each digit is chosen as the largest possible at the considered position. Here, we define and study the other extreme $\B$-representation, called the \emph{lazy $\B$-expansion}, taking the least possible digit at each step. For $x\in[0,x_{\B})$, if the first $N$ digits of the lazy $\B$-expansion of $x$ are given by $a_0,\ldots,a_{N-1}$, then the next digit $a_N$ is the least element in $[\![0,\ceil{\beta_N}-1]\!]$ such that 
\[
	\sum_{n=0}^N\frac{a_n}{\prod_{k=0}^n\beta_k}
	+\sum_{n=N+1}^{\infty}\frac{\ceil{\beta_n}-1}{\prod_{k=0}^n\beta_k}
	\ge x,
\]
or equivalently,
\[
	\sum_{n=0}^N\frac{a_n}{\prod_{k=0}^n\beta_k}
	+\frac{x_{\B^{(N)}}}{\prod_{k=0}^N\beta_k}
	\ge x.
\]
This algorithm is called the \emph{lazy algorithm}. For all $N\in\N$, we have
\[
	\sum_{n=0}^N \frac{a_n}{\prod_{k=0}^n\beta_k}\le x,
\] 
which implies that the lazy algorithm converges, that is, 
\[
	x=\sum_{n=0}^\infty \frac{a_n}{\prod_{k=0}^n\beta_k}.
\] 

We now define the \emph{lazy $\B$-transformation} by
\begin{align*}
	L_{\B}\colon
	&\bigcup_{i=0}^{p-1} \big(\{i\} \times (0,x_{\B^{(i)}}] \big)
	\to \bigcup_{i=0}^{p-1} \big(\{i\} \times (0,x_{\B^{(i)}}] \big),\\
 	&(i,x) \mapsto 
	\begin{cases}
	\big((i+1)\bmod p, \beta_i x\big) 
	& \text{if } x \in (0,x_{\B^{(i)}}-1]\\
	\big((i+1)\bmod p, \beta_i x-\ceil{\beta_i x-x_{\B^{(i+1)}}}\big)
	& \text{if } x \in (x_{\B^{(i)}}-1,x_{\B^{(i)}}].
	\end{cases}
\end{align*}
The lazy $\B$-expansion of $x\in (0,x_{\B}]$ is obtained by alternating the $p$ maps 
\[
	{\pi_2 \circ L_{\B} \circ \delta_i}_{\big| (0,x_{\B^{(i)}}]}
	\colon 
	(0,x_{\B^{(i)}}]\to (0,x_{\B^{(i+1)}}]
\]
for $i\in\Int$.
The following proposition is the analogue of Proposition~\ref{Pro : LexMax} for the lazy $\B$-transformation, which can be proved in a similar fashion.

\begin{proposition}
\label{Pro : LexMin}
For all $x\in(0,x_{\B}]$ and $n\in \N$, we have
\[
	\pi_2 \circ L_{\B}^n \circ \delta_0 (x)
	=\beta_{n-1}\cdots\beta_0 x-\sum_{i=0}^{n-1} \beta_{n-1}\cdots \beta_{i+1}c_i	
\]
where $(c_0,\ldots,c_{n-1})$ is the lexicographically least $n$-tuple in $\prod_{k=0}^{n-1}\ [\![0,\ceil{\beta_k}-1]\!]$ such that $\frac{\sum_{i=0}^{n-1} \beta_{n-1}\cdots \beta_{i+1}c_i}{\beta_{n-1}\cdots\beta_0} + \sum_{m=n}^{\infty}\frac{\ceil{\beta_m}-1}{\prod_{k=0}^m \beta_k} \ge x$. 
\end{proposition}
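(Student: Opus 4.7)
The plan is to prove this by induction on $n$, paralleling the proof of Proposition~\ref{Pro : LexMax} with every extremal choice reversed from largest to smallest and the greedy inequality replaced by its lazy dual. The base case $n=0$ is immediate since $x\le x_{\B}$ by hypothesis, so the empty tuple trivially satisfies the requirement. For the inductive step, assume the statement at level $n$ and set $y=\pi_2\circ L_{\B}^n\circ \delta_0(x)\in(0,x_{\B^{(n)}}]$. Reading off the two-case definition of $L_{\B}$, I would set
\[
c_n=\begin{cases}0 &\text{if }y\in(0,x_{\B^{(n)}}-1],\\ \ceil{\beta_n y-x_{\B^{(n+1)}}} &\text{if }y\in(x_{\B^{(n)}}-1,x_{\B^{(n)}}].\end{cases}
\]
A direct check using~\eqref{Eq : EqualitiesXB} shows that $c_n\in[\![0,\ceil{\beta_n}-1]\!]$ and that $\pi_2\circ L_{\B}^{n+1}\circ \delta_0(x)=\beta_n y-c_n$, which by the induction hypothesis equals $\beta_n\cdots\beta_0\, x-\sum_{i=0}^n \beta_n\cdots\beta_{i+1}c_i$, the desired identity.

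Next, I would verify that $(c_0,\ldots,c_n)$ meets the lazy inequality at level $n+1$. Telescoping~\eqref{Eq : EqualitiesXB} gives $\sum_{m=n+1}^\infty\frac{\ceil{\beta_m}-1}{\prod_{k=0}^m\beta_k}=\frac{x_{\B^{(n+1)}}}{\beta_n\cdots\beta_0}$, so the inequality reduces to $c_n+x_{\B^{(n+1)}}\ge \beta_n y$. This holds by definition of the ceiling when $y\in(x_{\B^{(n)}}-1,x_{\B^{(n)}}]$, and in the other case follows from $\beta_n y\le \beta_n(x_{\B^{(n)}}-1)=x_{\B^{(n+1)}}+\ceil{\beta_n}-1-\beta_n\le x_{\B^{(n+1)}}$.

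For the lexicographic minimality, I would argue by contradiction: suppose $(c_0',\ldots,c_n')<_{\lex}(c_0,\ldots,c_n)$ also satisfies the lazy inequality, and let $m$ be the first coordinate at which they differ. If $m<n$, replacing $c_{m+1}',\ldots,c_n'$ by the maximal values $\ceil{\beta_i}-1$ only strengthens the inequality, and the identity $\sum_{i=m+1}^n\beta_n\cdots\beta_{i+1}(\ceil{\beta_i}-1)+x_{\B^{(n+1)}}=\beta_n\cdots\beta_{m+1}\,x_{\B^{(m+1)}}$ (another telescoping of~\eqref{Eq : EqualitiesXB}) then forces $(c_0',\ldots,c_m')$ to satisfy the level-$(m+1)$ lazy inequality, contradicting the lex-minimality of $(c_0,\ldots,c_m)$---which follows from the induction hypothesis by the same tail-completion trick. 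If $m=n$, then $c_n'\le c_n-1$ forces $c_n-1+x_{\B^{(n+1)}}\ge \beta_n y$, which is impossible when $c_n=0$ and contradicts $\ceil{\beta_n y-x_{\B^{(n+1)}}}-1<\beta_n y-x_{\B^{(n+1)}}$ in the other case. The main obstacle is bookkeeping the two telescoping identities via~\eqref{Eq : EqualitiesXB} and isolating the prefix-minimality lemma; once these are in hand, the argument runs in clean parallel with the greedy proof.
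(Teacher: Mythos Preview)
Your proposal is correct and follows exactly the scheme the paper intends: the paper does not spell out a proof of Proposition~\ref{Pro : LexMin} but merely says it ``can be proved in a similar fashion'' to Proposition~\ref{Pro : LexMax}, and your argument is precisely that lazy dual---induction on $n$, definition of $c_n$ from the two branches of $L_{\B}$, verification of the lazy inequality via the telescoping of~\eqref{Eq : EqualitiesXB}, and the lexicographic-minimality contradiction split into the cases $m<n$ and $m=n$. The only cosmetic difference is that you isolate the prefix-minimality as a separate observation rather than stating it upfront as in the greedy proof, but the substance is identical.
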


Note that for each $i\in\Int$, 
\[
	L_{\B}\big(\{i\} \times(x_{\B^{(i)}}-1,x_{\B^{(i)}}] \big)
\subseteq \{(i+1)\bmod p\} \times(x_{\B^{(i+1)}}-1,x_{\B^{(i+1)}}].
\]
Therefore, the lazy $\B$-transformation can be restricted to the domain $\bigcup_{i=0}^{p-1} \big(\{i\} \times(x_{\B^{(i)}}-1,x_{\B^{(i)}}]$. 
The (restricted) lazy $\B$-transformation generates the digits of the lazy $\B$-expan\-sions of real numbers in the interval $(x_{\B}-1,x_{\B}]$ as follows. For all $x\in(x_{\B}-1,x_{\B}]$ and $n\in \N$, the digit $a_n$ in the lazy $\B$-expansion of $x$ is equal to $\ceil{\beta_n\pi_2\big(L_{\B}^n(0,x)\big)-x_{\B^{(n+1)}}}$. Finally, observe that for all $(i,x)\in\bigcup_{i=0}^{p-1} (\{i\} \times (0,x_{\B^{(i)}}])$, there exists $N \in \N$ such that for all $n\ge N$, $L_{\B}^n(i,x)\in \bigcup_{i=0}^{p-1} \big(\{i\}\times(x_{\B^{(i)}}-1,x_{\B^{(i)}}]$. 

\begin{example}
\label{Ex : LbetaRacine13}
Consider again the length-$2$ alternate base $\B=(\frac{1+\sqrt{13}}{2}, \frac{5+\sqrt{13}}{6})$ from Examples~\ref{Ex : TbetaRacine13} and~\ref{Ex : TbetaRacine13SecondPart}. We have $x_{\B}=\frac{5+7\sqrt{13}}{18}\simeq 1.67$ and $x_{\B^{(1)}}=\frac{2 + \sqrt{13}}{3}\simeq 1.86$. The maps ${\pi_2 \circ L_{\B} \circ \delta_0}_{\big| (0,x_{\B}]}\colon (0,x_{\B}]\to(0,x_{\B^{(1)}}]$ and ${\pi_2 \circ L_{\B} \circ \delta_1}_{\big| (0,x_{\B^{(1)}}]}\colon (0,x_{\B^{(1)}}]\to(0,x_{\B}]$ are depicted in Figure~\ref{Fig : LbetaRacine13}. 
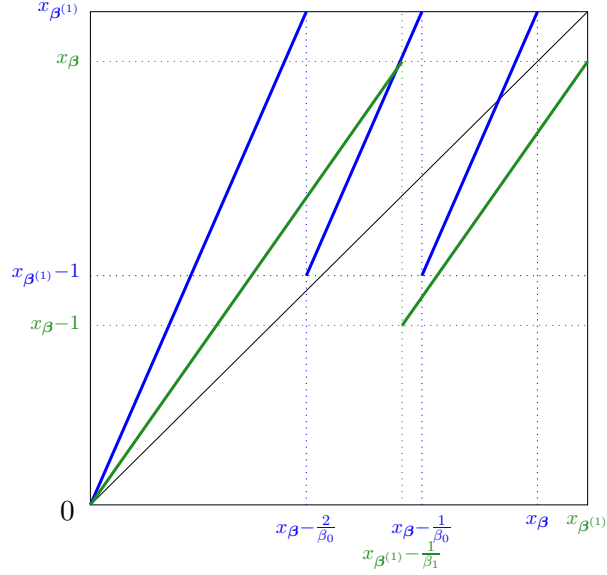
\begin{figure}[htb]
\begin{tikzpicture}[scale=3.5]
\draw[line width=0.0mm ] (0,0) rectangle (1.86852,1.86852); 
\draw[line width=0.0mm ] (0,0)  to (1.86852,1.86852); 
\draw[MyGreen,dotted] (0,1.67994)   to (1.86852,1.67994) ; 
\draw[blue,dotted] (1.67994,0)   to (1.67994,1.86852) ; 
\draw (-0.02,-0.02) node[left]{$0$};
\draw[MyGreen] (1.86852,0) node[below]{$\smath{x_{\B^{(1)}} }$};
\draw[blue] (1.67994,0) node[below]{$\smath{x_{\B}}$};
\draw[MyGreen] (1.17129,-0.1) node[below]{$\smath{x_{\B^{(1)}}{-}\frac{1}{\beta_1}}$};
\draw[blue] (0.81142,0) node[below]{$\smath{x_{\B}{-}\frac{2}{\beta_0}}$};
\draw[blue] (1.24568,0) node[below]{$\smath{x_{\B}{-}\frac{1}{\beta_0}}$};
\draw[blue] (0,1.86852) node[left]{$\smath{x_{\B^{(1)}}}$};
\draw[MyGreen] (0,1.67994) node[left]{$\smath{x_{\B}}$};
\draw[blue] (0,0.86852) node[left]{$\smath{x_{\B^{(1)}}{-}1}$};
\draw[MyGreen] (0,0.67994) node[left]{$\smath{x_{\B}{-}1}$};
\draw[line width=0.4mm,blue] (0,0) to (0.81142,1.86852); 
\draw[line width=0.4mm,blue] (0.81142,0.86852) to (1.24568,1.86852); 
\draw[blue,dotted] (0.81142,0)   to (0.81142,1.86852) ; 
\draw[line width=0.4mm,blue] (1.24568,0.86852) to (1.67994,1.86852); 
\draw[blue,dotted] (1.24568,0)   to (1.24568,1.86852) ;
 \draw[blue,dotted] (0,0.86852)   to (1.86852,0.86852) ;
 \draw[line width=0.4mm,MyGreen] (0,0) to (1.17129,1.67994); 
 \draw[MyGreen,dotted] (1.17129,0)   to (1.17129,1.86852) ; 
  \draw[line width=0.4mm,MyGreen] (1.17129,0.67994) to (1.86852,1.67994); 
  \draw[MyGreen,dotted] (0,0.67994)   to ( 1.86852,0.67994) ;
\end{tikzpicture}
\caption{
The maps ${\pi_2 \circ L_{\B} \circ \delta_0}_{\big| (0,x_{\B}]}$ (blue) and ${\pi_2 \circ L_{\B} \circ \delta_1}_{\big| (0,x_{\B^{(1)}}]}$ (green) with $\B=(\frac{1+\sqrt{13}}{2}, \frac{5+\sqrt{13}}{6})$.}
\label{Fig : LbetaRacine13}
\end{figure}
In Figure~\ref{Fig : First5DigitsLbetaRacine13} we see the computation of the first five digits of the lazy $\B$-expansion of $\frac{1+\sqrt{5}}{5}$. 
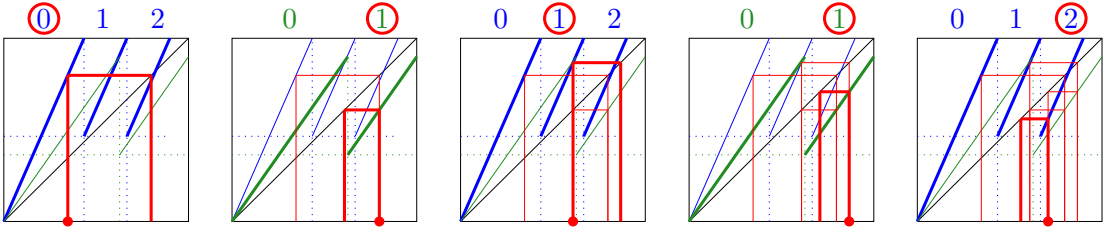
\begin{figure}[htb]
\begin{minipage}{.2\linewidth}
\begin{tikzpicture}[scale=1.3]
\draw[line width=0.0mm ] (0,0) rectangle (1.86852,1.86852); 
\draw[line width=0.0mm ] (0,0)  to (1.86852,1.86852); 
\draw[line width=0.4mm,blue] (0,0) to (0.81142,1.86852); 
\draw[line width=0.4mm,blue] (0.81142,0.86852) to (1.24568,1.86852); 
\draw[blue,dotted] (0.81142,0)   to (0.81142,1.86852) ; 
\draw[line width=0.4mm,blue] (1.24568,0.86852) to (1.67994,1.86852); 
\draw[blue,dotted] (1.24568,0)   to (1.24568,1.86852) ;
 \draw[blue,dotted] (0,0.86852)   to (1.67994,0.86852) ;
 \draw[line width=0.4mm,red] (0.81142/2,2.07) circle [radius=0.15] node {$ $};
 \draw[blue] (0.81142/2,1.86852) node[above]{$0$};
\draw[blue] (1,1.86852) node[above]{$1$};
\draw[blue] (1.5571,1.86852 ) node[above]{$2$};
 \draw[line width=0.05mm,MyGreen] (0,0) to (1.17129,1.67994); 
 \draw[MyGreen,dotted] (1.17129,0)   to (1.17129,1.67994) ; 
  \draw[line width=0.05mm,MyGreen] (1.17129,0.67994) to (1.86852,1.67994); 
  \draw[MyGreen,dotted] (0,0.67994)   to ( 1.86852,0.67994) ;
\filldraw[red] (0.647214,0) circle (1.3pt);
\draw[line width=0.4mm,red] (0.647214,0) to (0.647214,1.49039); 
\draw[line width=0.4mm,red] (0.647214,1.49039) to (1.49039 ,1.49039); 
\draw[line width=0.4mm,red] (1.49039 ,1.49039) to (1.49039 ,0);
\end{tikzpicture}
\end{minipage}%
\begin{minipage}{.2\linewidth}
\begin{tikzpicture}[scale=1.3]
\draw[line width=0.0mm ] (0,0) rectangle (1.86852,1.86852); 
\draw[line width=0.0mm ] (0,0)  to (1.86852,1.86852); 
\draw[line width=0.05mm,blue] (0,0) to (0.81142,1.86852); 
\draw[line width=0.05mm,blue] (0.81142,0.86852) to (1.24568,1.86852); 
\draw[line width=0.05mm,blue] (1.24568,0.86852) to (1.67994,1.86852); 
\draw[blue,dotted] (0.81142,0)   to (0.81142,1.86852) ; 
\draw[blue,dotted] (1.24568,0)   to (1.24568,1.86852) ;
 \draw[blue,dotted] (0,0.86852)   to (1.67994,0.86852) ;
 \draw[line width=0.4mm,MyGreen] (0,0) to (1.17129,1.67994); 
 \draw[MyGreen,dotted] (1.17129,0)   to (1.17129,1.67994) ; 
  \draw[line width=0.4mm,MyGreen] (1.17129,0.67994) to (1.86852,1.67994); 
  \draw[MyGreen,dotted] (0,0.67994)   to ( 1.86852,0.67994) ;
  \draw[MyGreen] (1.17129/2,1.86852) node[above]{$0$};
\draw[MyGreen] (1.51991,1.86852) node[above]{$1$}; 
\draw[line width=0.4mm,red] (1.51991,2.07) circle [radius=0.15] node {$ $};
\draw[line width=0.05mm,red] (0.647214,0) to (0.647214,1.49039); 
\draw[line width=0.05mm,red] (0.647214,1.49039) to (1.49039 ,1.49039); 
\draw[line width=0.05mm,red] (1.49039 ,1.49039) to (1.49039 ,0);
\filldraw[red] (1.49039,0) circle (1.3pt);
\draw[line width=0.4mm,red] (1.49039,0) to (1.49039,1.1376);
\draw[line width=0.4mm,red] (1.49039,1.1376) to (1.1376,1.1376); 
\draw[line width=0.4mm,red] (1.1376,1.1376) to (1.1376 ,0);
\end{tikzpicture}
\end{minipage}%
\begin{minipage}{.2\linewidth}
\begin{tikzpicture}[scale=1.3]
\draw[line width=0.0mm ] (0,0) rectangle (1.86852,1.86852); 
\draw[line width=0.0mm ] (0,0)  to (1.86852,1.86852); 
\draw[line width=0.4mm,blue] (0,0) to (0.81142,1.86852); 
\draw[line width=0.4mm,blue] (0.81142,0.86852) to (1.24568,1.86852); 
\draw[blue,dotted] (0.81142,0)   to (0.81142,1.86852) ; 
\draw[line width=0.4mm,blue] (1.24568,0.86852) to (1.67994,1.86852); 
\draw[blue,dotted] (1.24568,0)   to (1.24568,1.86852) ;
 \draw[blue,dotted] (0,0.86852)   to (1.67994,0.86852) ;
 \draw[blue] (0.81142/2,1.86852) node[above]{$0$};
 \draw[line width=0.4mm,red] (1,2.07) circle [radius=0.15] node {$ $};
\draw[blue] (1,1.86852) node[above]{$1$};
\draw[blue] (1.5571,1.86852 ) node[above]{$2$};
 \draw[line width=0.05mm,MyGreen] (0,0) to (1.17129,1.67994); 
 \draw[MyGreen,dotted] (1.17129,0)   to (1.17129,1.67994) ; 
  \draw[line width=0.05mm,MyGreen] (1.17129,0.67994) to (1.86852,1.67994); 
  \draw[MyGreen,dotted] (0,0.67994)   to ( 1.86852,0.67994) ;
\draw[line width=0.05mm,red] (0.647214,0) to (0.647214,1.49039); 
\draw[line width=0.05mm,red] (0.647214,1.49039) to (1.49039 ,1.49039); 
\draw[line width=0.05mm,red] (1.49039 ,1.49039) to (1.49039 ,0);
\draw[line width=0.05mm,red] (1.49039,0) to (1.49039,1.1376);
\draw[line width=0.05mm,red] (1.49039,1.1376) to (1.1376,1.1376); 
\draw[line width=0.05mm,red] (1.1376,1.1376) to (1.1376 ,0);
\filldraw[red] (1.1376 ,0) circle (1.3pt);
\draw[line width=0.4mm,red] (1.1376,0) to (1.1376,1.61964);
\draw[line width=0.4mm,red] (1.1376,1.61964) to (1.61964,1.61964); 
\draw[line width=0.4mm,red] (1.61964,1.61964)to (1.61964 ,0);
\end{tikzpicture}
\end{minipage}%
\begin{minipage}{.2\linewidth}
\begin{tikzpicture}[scale=1.3]
\draw[line width=0.0mm ] (0,0) rectangle (1.86852,1.86852); 
\draw[line width=0.0mm ] (0,0)  to (1.86852,1.86852); 
\draw[line width=0.05mm,blue] (0,0) to (0.81142,1.86852); 
\draw[line width=0.05mm,blue] (0.81142,0.86852) to (1.24568,1.86852); 
\draw[line width=0.05mm,blue] (1.24568,0.86852) to (1.67994,1.86852); 
\draw[blue,dotted] (0.81142,0)   to (0.81142,1.86852) ; 
\draw[blue,dotted] (1.24568,0)   to (1.24568,1.86852) ;
 \draw[blue,dotted] (0,0.86852)   to (1.67994,0.86852) ;
 \draw[line width=0.4mm,MyGreen] (0,0) to (1.17129,1.67994); 
 \draw[MyGreen,dotted] (1.17129,0)   to (1.17129,1.67994) ; 
  \draw[line width=0.4mm,MyGreen] (1.17129,0.67994) to (1.86852,1.67994); 
  \draw[MyGreen,dotted] (0,0.67994)   to ( 1.86852,0.67994) ;
  \draw[MyGreen] (1.17129/2,1.86852) node[above]{$0$};
   \draw[line width=0.4mm,red] (1.51991,2.07) circle [radius=0.15] node {$ $};
\draw[MyGreen] (1.51991,1.86852) node[above]{$1$}; 
\draw[line width=0.05mm,red] (0.647214,0) to (0.647214,1.49039); 
\draw[line width=0.05mm,red] (0.647214,1.49039) to (1.49039 ,1.49039); 
\draw[line width=0.05mm,red] (1.49039 ,1.49039) to (1.49039 ,0);
\draw[line width=0.05mm,red] (1.49039,0) to (1.49039,1.1376);
\draw[line width=0.05mm,red] (1.49039,1.1376) to (1.1376,1.1376); 
\draw[line width=0.05mm,red] (1.1376,1.1376) to (1.1376 ,0);
\draw[line width=0.05mm,red] (1.1376,0) to (1.1376,1.61964);
\draw[line width=0.05mm,red] (1.1376,1.61964) to (1.61964,1.61964); 
\draw[line width=0.05mm,red] (1.61964,1.61964)to (1.61964 ,0);
\filldraw[red] (1.61964 ,0) circle (1.3pt);
\draw[line width=0.4mm,red] (1.61964,0) to (1.61964,1.32298);
\draw[line width=0.4mm,red] (1.61964,1.32298) to (1.32298,1.32298); 
\draw[line width=0.4mm,red] (1.32298,1.32298) to (1.32298,0);
\end{tikzpicture}
\end{minipage}%
\begin{minipage}{.2\linewidth}
\begin{tikzpicture}[scale=1.3]
\draw[line width=0.0mm ] (0,0) rectangle (1.86852,1.86852); 
\draw[line width=0.0mm ] (0,0)  to (1.86852,1.86852); 
\draw[line width=0.4mm,blue] (0,0) to (0.81142,1.86852); 
\draw[line width=0.4mm,blue] (0.81142,0.86852) to (1.24568,1.86852); 
\draw[blue,dotted] (0.81142,0)   to (0.81142,1.86852) ; 
\draw[line width=0.4mm,blue] (1.24568,0.86852) to (1.67994,1.86852); 
\draw[blue,dotted] (1.24568,0)   to (1.24568,1.86852) ;
 \draw[blue,dotted] (0,0.86852)   to (1.67994,0.86852) ;
 \draw[blue] (0.81142/2,1.86852) node[above]{$0$};
\draw[blue] (1,1.86852) node[above]{$1$};
   \draw[line width=0.4mm,red] (1.5571,2.07) circle [radius=0.15] node {$ $};
\draw[blue] (1.5571,1.86852 ) node[above]{$2$};
 \draw[line width=0.05mm,MyGreen] (0,0) to (1.17129,1.67994); 
 \draw[MyGreen,dotted] (1.17129,0)   to (1.17129,1.67994) ; 
  \draw[line width=0.05mm,MyGreen] (1.17129,0.67994) to (1.86852,1.67994); 
  \draw[MyGreen,dotted] (0,0.67994)   to ( 1.86852,0.67994) ;
\draw[line width=0.05mm,red] (0.647214,0) to (0.647214,1.49039); 
\draw[line width=0.05mm,red] (0.647214,1.49039) to (1.49039 ,1.49039); 
\draw[line width=0.05mm,red] (1.49039 ,1.49039) to (1.49039 ,0);
\draw[line width=0.05mm,red] (1.49039,0) to (1.49039,1.1376);
\draw[line width=0.05mm,red] (1.49039,1.1376) to (1.1376,1.1376); 
\draw[line width=0.05mm,red] (1.1376,1.1376) to (1.1376 ,0);
\draw[line width=0.05mm,red] (1.1376,0) to (1.1376,1.61964);
\draw[line width=0.05mm,red] (1.1376,1.61964) to (1.61964,1.61964); 
\draw[line width=0.05mm,red] (1.61964,1.61964)to (1.61964 ,0);
\draw[line width=0.05mm,red] (1.61964,0) to (1.61964,1.32298);
\draw[line width=0.05mm,red] (1.61964,1.32298) to (1.32298,1.32298); 
\draw[line width=0.05mm,red] (1.32298,1.32298) to (1.32298,0);
\filldraw[red] (1.32298,0) circle (1.3pt);
\draw[line width=0.4mm,red] (1.32298,0) to (1.32298,1.04653);
\draw[line width=0.4mm,red] (1.32298,1.04653) to (1.04653,1.04653); 
\draw[line width=0.4mm,red] (1.04653,1.04653) to (1.04653,0);
\end{tikzpicture}
\end{minipage}%
\caption{The first five digits of the lazy $\B$-expansion of $\frac{1+\sqrt{5}}{5}$ are $01112$ for $\B=(\frac{1+\sqrt{13}}{2}, \frac{5+\sqrt{13}}{6})$.}
\label{Fig : First5DigitsLbetaRacine13}
\end{figure}
\end{example}

\subsection{A note on Cantor bases}
\label{Sec : Cantor}

The greedy algorithm described in Sections~\ref{Sec : Greedy} and~\ref{Sec : Lazy} is well defined in the extended context of Cantor bases, i.e., sequences of real numbers $\B=(\beta_n)_{n\in\N}$ greater than $1$ such that the product $\prod_{n=0}^{\infty}\beta_n$ is infinite~\cite{Charlier&Cisternino:2020}. In this case, the greedy algorithm converge on $[0,1)$: for all $x\in[0,1)$, the computed digits $a_n$ are such that $\sum_{n=0}^{\infty} \frac{a_n}{\prod_{k=0}^n \beta_k}=x$. Therefore, the value $x_{\B}$ defined as in~\eqref{Eq : x_B} is greater than or equal to $1$. However, it might be that $x_{\B}=\infty$. For example, it is the case for the Cantor base given by $\beta_n=1+\frac{1}{n+1}$ for all $n\in\N$.

Note that the restriction of the transformation $\pi_2 \circ T_{\B}^n \circ \delta_0$ to the unit interval $[0,1)$ coincide with the composition $T_{\beta_{n-1}}\circ \cdots \circ T_{\beta_0}$. Thus, when restricted to $[0,1)$, Proposition~\ref{Pro : LexMax} can be reformulated as follows.

\begin{proposition}
\label{Pro : CompositionGreedyCantor}
For all $x\in[0,1)$ and $n\in \N$, we have
\[
	T_{\beta_{n-1}}\circ \cdots \circ T_{\beta_0}(x)
	=\beta_{n-1}\cdots\beta_0 x-\sum_{k=0}^{n-1} \beta_{n-1}\cdots \beta_{k+1}c_k
\]
where $(c_0,\ldots,c_{n-1})$ is the lexicographically greatest $n$-tuple in $\prod_{k=0}^{n-1}\ [\![0,\ceil{\beta_k}-1]\!]$ such that $\frac{\sum_{k=0}^{n-1} \beta_{n-1}\cdots \beta_{k+1}c_k}{\beta_{n-1}\cdots\beta_0} \le x$. 
\end{proposition}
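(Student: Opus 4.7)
The proposition is the restriction of Proposition~\ref{Pro : LexMax} to starting points $x \in [0,1)$, rewritten in terms of compositions of the classical greedy $\beta$-transformations rather than iterates of the augmented alternate-base map $T_{\B}$. My plan is to reduce to that earlier result in two short steps.

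First, I would establish the identity $\pi_2 \circ T_{\B}^n \circ \delta_0(x) = T_{\beta_{n-1}} \circ \cdots \circ T_{\beta_0}(x)$ for every $x \in [0,1)$, which is the fact flagged in the paragraph preceding the proposition. This goes by induction on $n$: the base case is $\pi_2(\delta_0(x)) = x$, and for the inductive step one observes that the classical $T_{\beta_i}$ sends $[0,1)$ into itself, so the $T_{\B}$-orbit of $(0,x)$ stays in $\Int \times [0,1)$. Consequently, only the first branch of the definition of $T_{\B}$ is ever triggered along this orbit, and on that branch $T_{\B}$ acts on the second coordinate precisely as the classical $T_{\beta_i}$.

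Second, I would apply Proposition~\ref{Pro : LexMax} directly. In the Cantor-base generality of Section~\ref{Sec : Cantor} one has to note that its proof carries over verbatim—the inductive argument there involves only the finitely many bases $\beta_0, \ldots, \beta_{n-1}$ and never uses the periodicity $\beta_{k+p} = \beta_k$—so the same conclusion holds when $\B = (\beta_n)_{n \in \N}$ is an arbitrary Cantor base, provided one interprets the state space of $T_{\B}$ as $\N \times [0,1)$. Substituting the identity from the first step into the formula and lexicographic characterization provided by Proposition~\ref{Pro : LexMax} yields the statement.

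I do not anticipate a genuine obstacle: the proposition is essentially a restatement of earlier material, and the only minor point to check is the orbit containment $\pi_2(T_{\B}^k(0,x)) \in [0,1)$ for $0 \le k \le n$, which is an immediate consequence of the fact that $T_{\beta_i}([0,1)) \subseteq [0,1)$.
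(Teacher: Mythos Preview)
Your proposal is correct and matches the paper's approach exactly: the paper does not give a separate proof of this proposition, but simply observes (in the sentence preceding it) that $\pi_2\circ T_{\B}^n\circ\delta_0$ restricted to $[0,1)$ coincides with $T_{\beta_{n-1}}\circ\cdots\circ T_{\beta_0}$ and then states the result as a reformulation of Proposition~\ref{Pro : LexMax}. Your remark that the inductive proof of Proposition~\ref{Pro : LexMax} uses only finitely many bases and hence extends to arbitrary Cantor bases is also in line with the paper's comment in Section~\ref{Sec : Cantor}.
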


For all $k\in [\![0,n-1]\!]$, the transformation $L_{\beta_k}$ is defined on $(0,x_{\beta_k}]$ and can be restricted to $(x_{\beta_k}-1,x_{\beta_k}]$. So, the restricted transformations $L_{\beta_0},\ldots,L_{\beta_{n-1}}$ cannot be composed to one another in general. Therefore, even if the lazy algorithm can be defined for Cantor bases, provided that $x_{\B}<\infty$, we cannot state an analogue of Proposition~\ref{Pro : CompositionGreedyCantor} in terms of the lazy transformations for Cantor bases.

Even though this paper is mostly concerned with alternate bases, let us emphasize that some results are indeed valid for any sequence $(\beta_n)_{n\in\N}\in (\R_{>1})^{\N}$, and hence for any Cantor base. This is the case of Proposition~\ref{Pro : CompositionGreedyCantor}, Theorem~\ref{Thm : mu}, Corollary~\ref{Cor : lambda} and Proposition~\ref{Pro : Density-Mu-i}.

\section{Dynamical properties of $T_{\B}$}
\label{Section : InvariantMeasure}

In this section, we study the dynamics of the greedy $\B$-transformation. First, we generalize Theorem~\ref{Thm : uniqueMeasure} to the transformation $T_{\B}$ on $[\![0,p-1]\!]\times [0,1)$. Second, we extend the obtained result to the extended transformation $T_{\B}$. Third, we provide a formula for the densities of the measures found in the first two parts. Finally, we compute the frequencies of the digits in the greedy $\B$-expansions.

\subsection{Unique absolutely continuous $T_{\B}$-invariant measure}

In order to generalize Theorem~\ref{Thm : uniqueMeasure} to alternate bases, we start by recalling a result of Lasota and Yorke.

\begin{theorem}
\cite[Theorem 4]{Lasota&Yorke:1982}
\label{Thm : Lasota&Yorke}
Let $T\colon [0,1)\to [0,1)$ be a transformation for which there exists a partition $[a_0,a_1),\ldots,[a_{K-1},a_K)$ of the interval $[0,1)$ with $a_0<\cdots<a_K$ such that for each $k \in [\![0,K-1]\!]$, $T_{\big|[a_k,a_{k+1})}$ is convex, $T(a_k)=0$, $T'(a_k)>0$ and $T'(0)>1$. Then there exists a unique $T$-invariant absolutely continuous probability measure. Furthermore, its density is bounded and decreasing, and the corresponding dynamical system is exact. 
\end{theorem}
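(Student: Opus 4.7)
The plan is to study $T$ via its Perron--Frobenius transfer operator $P\colon L^1([0,1))\to L^1([0,1))$, defined by $(Pf)(x)=\sum_{k=0}^{K-1}f(\psi_k(x))\psi_k'(x)$, where $\psi_k\colon[0,1)\to[a_k,a_{k+1})$ is the inverse of the $k$-th branch (well defined and onto because each branch is strictly increasing from $T(a_k)=0$ with $T'(a_k)>0$). The defining duality $\int(Pf)\,g\,d\lambda=\int f\,(g\circ T)\,d\lambda$ for all bounded measurable $g$ identifies $T$-invariant absolutely continuous probability measures with fixed probability densities of $P$, so the task reduces to exhibiting and analyzing a fixed point of $P$ within a suitable class.

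The central structural input is that the cone $\mathcal{D}$ of bounded nonnegative decreasing functions on $[0,1)$ satisfies $P(\mathcal{D})\subseteq\mathcal{D}$. Convexity of each branch together with $T(a_k)=0$ forces $\psi_k$ to be concave with $\psi_k(0)=a_k$, so $\psi_k'=1/(T'\circ\psi_k)$ is nonnegative and decreasing on $[0,1)$. For $f\in\mathcal{D}$, each $f\circ\psi_k$ is decreasing (composition of a decreasing function with an increasing one), so each summand $f(\psi_k(x))\psi_k'(x)$ is a product of two nonnegative decreasing functions; hence $Pf$ is decreasing with $\|Pf\|_\infty=(Pf)(0)$. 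Existence of an invariant density then follows from a Kakutani--Markov averaging argument: start from $f_0=\mathbf{1}_{[0,1)}\in\mathcal{D}$, form the Ces\`aro averages $F_n=\tfrac{1}{n}\sum_{j=0}^{n-1}P^jf_0$, obtain a uniform $L^\infty$-bound on $(F_n)$ via a Lasota--Yorke type inequality of the form $\|Pf\|_\infty\le\theta\|f\|_\infty+C\|f\|_1$ with $\theta<1$ produced by $T'(0)>1$, and apply Helly's selection theorem to extract an $L^1$-convergent subsequence. Its limit $f_*\in\mathcal{D}$ is a bounded decreasing $T$-invariant absolutely continuous probability density.

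The hard part will be uniqueness of $f_*$ together with exactness. I would equip $\mathcal{D}$ with Hilbert's projective (Birkhoff) metric and show that some iterate $P^N$ has finite projective diameter on $\mathcal{D}$, so that $P^N$ acts as a strict contraction. Uniqueness of the fixed density is then immediate, and the resulting convergence $\|P^nf-f_*\|_1\to 0$ for every density $f$ gives exactness through the standard correspondence between asymptotic stability of the transfer operator and triviality of the tail $\sigma$-algebra $\bigcap_{n\ge 0}T^{-n}\mathcal{B}([0,1))$. The core analytic estimate, and the step I expect to be the main obstacle, is precisely the finite projective diameter of $P^N(\mathcal{D})$: it requires combining the expansion $T'(0)>1$ with the fullness of the branches to show that after finitely many iterations every $P^Nf$ with $f\in\mathcal{D}$ is bounded below by a constant multiple of its $L^1$-norm on all of $[0,1)$, so that ratios of two such iterates remain uniformly bounded.
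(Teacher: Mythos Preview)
The paper does not prove this theorem at all: it is quoted verbatim from \cite[Theorem~4]{Lasota&Yorke:1982} and used as a black box (see the proof of Theorem~\ref{Thm : mu}, which simply invokes it together with Lemma~\ref{Lem : CompositionPiecewiseLinear}). So there is no ``paper's own proof'' to compare your attempt against.

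That said, your outline is close in spirit to Lasota and Yorke's original argument. The invariance of the cone $\mathcal{D}$ of bounded nonnegative decreasing functions under the transfer operator $P$, and the existence of a fixed density via Ces\`aro averages plus a Helly-type compactness, are exactly the ingredients they use. One correction: the inverse branches $\psi_k$ are not defined on all of $[0,1)$ unless the $k$-th branch is onto; the correct formula is $(Pf)(x)=\sum_k \mathbf{1}_{[0,T(a_{k+1}^-))}(x)\,f(\psi_k(x))\psi_k'(x)$. This does not spoil the cone invariance (the drop to zero at the right endpoint of a short branch is a downward jump), but your statement as written is inaccurate.

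Where your proposal diverges, and where it is weakest, is uniqueness and exactness. You aim for a Birkhoff--Hilbert projective contraction, and you correctly flag the uniform lower bound on $P^N f$ as the obstacle; that lower bound is not easy to obtain from your hypotheses alone, and this route is not the one Lasota and Yorke take. Their argument is more direct: they show that any invariant density $f_*\in\mathcal{D}$ is strictly positive on all of $[0,1)$ (the first branch is full and expanding at $0$, so $Pf_*(x)\ge f_*(\psi_0(x))\psi_0'(x)>0$ propagates positivity), hence two distinct ergodic a.c.\ invariant measures cannot have disjoint supports, forcing uniqueness. Exactness is then obtained by proving $L^1$-convergence $P^n f\to f_*$ for every probability density $f$, without passing through a cone contraction. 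Your Lasota--Yorke-type sup-norm inequality is also not quite right as stated: only $T'(0)>1$ is assumed, not $T'(a_k)>1$ for all $k$, so the contraction factor $\theta$ has to come specifically from the $k=0$ term, with the remaining terms controlled via the decreasing property of $f$ and absorbed into the $\|f\|_1$ part.
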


We then prove a stability lemma. 

\begin{lemma}
\label{Lem : CompositionPiecewiseLinear}
Let $\mathcal{I}$ be the family of transformations $T\colon [0,1)\to [0,1)$ for which there exist a partition $[a_0,a_1),\ldots,[a_{K-1},a_K)$ of the interval $[0,1)$ with $a_0<\cdots<a_K$ and a slope $s>1$ such that for all $k\in[\![0,K-1]\!]$, $a_{k+1}-a_k\le \frac{1}{s}$ and for all $x\in[a_k,a_{k+1})$, $T(x)=s(x-a_k)$. Then $\mathcal{I}$ is closed under composition.
\end{lemma}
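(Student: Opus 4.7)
The plan is to take two maps $T_1, T_2 \in \mathcal{I}$, with partitions $\{a_k\}_{k=0}^{K}$ and $\{b_j\}_{j=0}^{J}$ and slopes $s_1, s_2 > 1$ respectively, and exhibit a partition of $[0,1)$ that witnesses $T_2 \circ T_1 \in \mathcal{I}$ with slope $s := s_1 s_2$. Since $s_1, s_2 > 1$, certainly $s > 1$. The natural candidate for the witnessing partition is the common refinement obtained by pulling back the breakpoints of $T_2$ through each linear branch of $T_1$: on each piece $[a_k, a_{k+1})$, insert the points $a_k + b_j/s_1$ for those $j$ such that $b_j < s_1(a_{k+1}-a_k)$.

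First I would check the shape of $T_2 \circ T_1$ on one of these refined sub-intervals. If $x \in [a_k + b_j/s_1, a_k + b_{j+1}/s_1)$, then $T_1(x) = s_1(x - a_k) \in [b_j, b_{j+1})$, so $T_2(T_1(x)) = s_2(s_1(x-a_k) - b_j) = s_1 s_2 \bigl(x - (a_k + b_j/s_1)\bigr)$. Thus on each sub-interval the composition is linear with slope $s$, and it vanishes at the left endpoint. The same holds for the last sub-interval $[a_k + b_{m_k-1}/s_1, a_{k+1})$ of each piece, where $m_k$ is the smallest index with $b_{m_k} \ge s_1(a_{k+1}-a_k)$.

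The main obstacle is checking the length condition on these sub-intervals, and the only non-routine case is the rightmost sub-interval $[a_k + b_{m_k-1}/s_1, a_{k+1})$ inside a given $[a_k, a_{k+1})$: its length is bounded only by $a_{k+1}-a_k \le 1/s_1$, not a priori by $1/s$. Here I would use the defining property of the partition of $T_2$: by the minimality of $m_k$ we have $s_1(a_{k+1}-a_k) \le b_{m_k}$, hence
\[
a_{k+1} - a_k - \frac{b_{m_k-1}}{s_1} \le \frac{b_{m_k} - b_{m_k-1}}{s_1} \le \frac{1}{s_1 s_2},
\]
since $b_{m_k} - b_{m_k-1} \le 1/s_2$ by the hypothesis on $T_2$. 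For all other sub-intervals the bound $(b_{j+1}-b_j)/s_1 \le 1/(s_1 s_2)$ is immediate.

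Finally I would note that the union of these refined sub-intervals, taken over all $k \in [\![0, K-1]\!]$, forms a partition of $[0,1)$ ordered by left endpoints, and that by the above $T_2 \circ T_1$ is linear with slope $s$ and vanishes at the left endpoint of each piece. This exhibits $T_2 \circ T_1 \in \mathcal{I}$ with slope $s = s_1 s_2$, proving closure under composition.
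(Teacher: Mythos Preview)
Your proof is correct and follows essentially the same approach as the paper: both construct the refined partition by pulling back the breakpoints $b_j$ of the second map through each linear branch of the first, and both identify the rightmost sub-interval of each piece as the only nontrivial case for the length bound, handling it via $s_1(a_{k+1}-a_k)\le b_{m_k}$ (in the paper's notation, $a_{k+1}-a_k\le b_{L_k+1}/s$). The arguments are the same up to notation.
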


\begin{proof}
Let $S,T\in\mathcal{I}$. Let $[a_0,a_1),\ldots,[a_{K-1},a_K)$ and $[b_0,b_1),\ldots,[b_{L-1},b_L)$ be partitions of the interval $[0,1)$ with $a_0<\cdots<a_K$, $b_0<\cdots<b_L$, and let $s,t>1$ such that for all $k\in[\![0,K-1]\!]$, $a_{k+1}-a_k\le \frac{1}{s}$, for all $\ell\in[\![0,L-1]\!]$, $b_{\ell+1}-b_\ell\le \frac{1}{t}$ 
and for all $x\in[0,1)$, $S(x)=s(x-a_k)$ if $x\in[a_k,a_{k+1})$ and $T(x)=t(x-b_\ell)$ if $x\in[b_\ell,b_{\ell+1})$. 
For each $k\in[\![0,K-1]\!]$, define $L_k$ to be the greatest $\ell\in[\![0,L-1]\!]$ such that $a_k+\frac{b_\ell}{s}< a_{k+1}$. Consider the partition
\begin{align*}
&\Big[a_0+\frac{b_0}{s},a_0+\frac{b_1}{s}\Big),\ldots,
\Big[a_0+\frac{b_{L_0-1}}{s},a_0+\frac{b_{L_0}}{s}\Big),
\Big[a_0+\frac{b_{L_0}}{s},a_1\Big)\\
&\vdots \\
&\Big[a_{K-1}+\frac{b_0}{s},a_{K-1}+\frac{b_1}{s}\Big),\ldots,
\Big[a_{K-1}+\frac{b_{L_{K-1}-1}}{s},a_{K-1}+\frac{b_{L_{K-1}}}{s}\Big),
\Big[a_{K-1}+\frac{b_{L_{K-1}}}{s},a_K\Big)
\end{align*} 
of the interval $[0,1)$. For each $k\in[\![0,K-1]\!]$ and $\ell\in[\![0,L_k-1]\!]$, $a_k+\frac{b_{\ell+1}}{s}-a_k-\frac{b_\ell}{s}\le \frac{1}{ts}$ and $a_{k+1}-a_k-\frac{b_{L_k}}{s}=(a_{k+1}-a_k-\frac{b_{L_k+1}}{s})+\frac{b_{L_k+1}-b_{L_k}}{s}\le \frac{1}{ts}$. Now, let $x\in[0,1)$ and $k\in [\![0,K-1]\!]$ be such that $x\in[a_k,a_{k+1})$. Then $S(x)=s(x-a_k)\in[0,1)$. We distinguish two cases: either there exists $\ell\in [\![0,L_k-1]\!]$ such that $x\in[a_k+\frac{b_\ell}{s},a_k+\frac{b_{\ell+1}}{s})$, or $x\in[a_k+\frac{b_{L_k}}{s},a_{k+1})$. In the former case, $S(x)\in[b_\ell,b_{\ell+1})$ and $T\circ S(x)=t(S(x)-b_\ell)=ts(x-(a_k+\frac{b_\ell}{s}))$. In the latter case, since $a_{k+1}-a_k\le \frac{b_{L_k+1}}{s}$, we get that $S(x)\in[b_{L_k},b_{L_k+1})$ and hence that $T\circ S(x)=t(S(x)-b_{L_k})=ts(x-(a_k+\frac{b_{L_k}}{s}))$. This shows that the composition $T\circ S$ belongs to $\mathcal{I}$.
\end{proof}

The following theorem provides us with the main tool for the construction of a $T_{\B}$-invariant measure.

\begin{theorem}
\label{Thm : mu}
For all $n\in\N_{\ge 1}$ and all $\beta_0,\ldots,\beta_{n-1}>1$, there exists a unique $(T_{\beta_{n-1}}\circ \cdots \circ T_{\beta_0})$-invariant absolutely continuous probability measure $\mu$ on $\mathcal{B}([0,1))$.  Furthermore, the measure $\mu$ is equivalent to the Lebesgue measure on $\mathcal{B}([0,1))$, its density is bounded and decreasing, and the dynamical system $([0,1),\mathcal{B}([0,1)),\mu,T_{\beta_{n-1}}\circ \cdots \circ T_{\beta_0})$ is exact and has entropy $\log(\beta_{n-1}\cdots\beta_0)$.
\end{theorem}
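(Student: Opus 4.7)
The plan is to deduce Theorem~\ref{Thm : mu} from the Lasota--Yorke criterion (Theorem~\ref{Thm : Lasota&Yorke}) via Lemma~\ref{Lem : CompositionPiecewiseLinear}, and to compute the entropy through Rokhlin's formula. I would first observe that each $T_{\beta_i}$ lies in the family $\mathcal{I}$ of Lemma~\ref{Lem : CompositionPiecewiseLinear}: on the partition of $[0,1)$ into the intervals $[k/\beta_i,(k+1)/\beta_i)$ for $k \in [\![0,\lceil \beta_i\rceil-1]\!]$ (with the last piece truncated at $1$), the map $T_{\beta_i}$ is affine of slope $\beta_i>1$ and each piece has length at most $1/\beta_i$. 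Applying Lemma~\ref{Lem : CompositionPiecewiseLinear} inductively, the composition $T:=T_{\beta_{n-1}}\circ \cdots \circ T_{\beta_0}$ also belongs to $\mathcal{I}$ with uniform slope $s:=\beta_{n-1}\cdots\beta_0>1$; concretely, there is a partition $[a_0,a_1),\ldots,[a_{K-1},a_K)$ of $[0,1)$ on each piece of which $T(x)=s(x-a_k)$. Each branch is affine (hence convex), satisfies $T(a_k)=0$ and $T'(a_k)=s>0$, and $T'(0)=s>1$, so the hypotheses of Theorem~\ref{Thm : Lasota&Yorke} are met. This yields a unique $T$-invariant absolutely continuous probability measure $\mu$ on $\mathcal{B}([0,1))$, with bounded decreasing density, and shows that the system $([0,1),\mathcal{B}([0,1)),\mu,T)$ is exact (hence ergodic).

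To establish the equivalence of $\mu$ with $\lambda$, only $\lambda \ll \mu$ needs an argument. Writing $h=d\mu/d\lambda$, which is decreasing and not identically zero, the set $\{h>0\}$ agrees $\lambda$-a.e.\ with some interval $[0,c)$ for $c\in(0,1]$. The Perron--Frobenius identity
\[
    h(x)=\frac{1}{s}\sum_{y\in T^{-1}(x)} h(y)
\]
implies that $\{h=0\}$ is backward invariant under $T$, equivalently that $\{h>0\}$ is forward invariant. A small induction using the partition construction in the proof of Lemma~\ref{Lem : CompositionPiecewiseLinear} shows that the leftmost branch of $T$ has maximal length $1/s$, so $T([0,1/s))=[0,1)$. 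If $c\le 1/s$, then for $x\in[c,1)$ the first-branch preimage $y=x/s$ lies in $[c/s,1/s)$ and satisfies $h(y)=0$ by backward invariance; but $[c/s,c)\subseteq[c/s,1/s)$, so $h$ vanishes on $[c/s,c)\subseteq\{h>0\}$, a contradiction. Hence $c>1/s$, so $[0,1/s)\subseteq\{h>0\}$, and forward invariance then forces $[0,1)=T([0,1/s))\subseteq\{h>0\}$, i.e.\ $c=1$. Thus $h>0$ $\lambda$-a.e., which gives $\lambda\ll\mu$.

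For the entropy, I would invoke Rokhlin's formula for piecewise monotone expanding maps preserving an absolutely continuous probability measure. Since $T$ is piecewise affine with constant derivative $s$ away from the finite set of breakpoints, this yields
\[
    h_\mu(T)=\int_{[0,1)} \log|T'(x)|\,d\mu(x)=\log s=\log(\beta_{n-1}\cdots\beta_0).
\]
The step I expect to require the most care is the equivalence of $\mu$ with $\lambda$, as it genuinely exploits the affine structure of $T\in\mathcal{I}$ and the fullness of its leftmost branch; existence and uniqueness of $\mu$, the shape of the density, and exactness are direct consequences of Lasota--Yorke, and the entropy computation reduces to observing the constancy of $|T'|$ and invoking Rokhlin's formula.
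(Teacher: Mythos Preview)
Your proof is correct and follows essentially the same route as the paper: both apply Lemma~\ref{Lem : CompositionPiecewiseLinear} to place the composition in the class $\mathcal{I}$, invoke Theorem~\ref{Thm : Lasota&Yorke} for existence, uniqueness, the density shape, and exactness, and then compute the entropy from the constant slope via Rokhlin's formula (the paper cites \cite{Dajani&Kalle:2021,Rohlin:1961} for this). The one place where you go further is the equivalence $\lambda\ll\mu$: the paper simply refers to an argument ``as in~\cite{Dajani&Kalle:2010}'', whereas you supply a self-contained proof exploiting that the leftmost branch of $T$ is full (length $1/s$), the monotonicity of $h$, and the Perron--Frobenius relation---this is a nice, elementary substitute for the citation.
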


\begin{proof}
The existence of a unique  $(T_{\beta_{n-1}}\circ \cdots \circ T_{\beta_0})$-invariant absolutely continuous probability measure $\mu$ on $\mathcal{B}([0,1))$, the fact that its density is bounded and decreasing, and the exactness of the corresponding dynamical system follow from Theorem~\ref{Thm : Lasota&Yorke} and Lemma~\ref{Lem : CompositionPiecewiseLinear}. With a similar argument as in~\cite{Dajani&Kalle:2010}, we can conclude that $\frac{d\mu}{d\lambda}>0$ $\lambda$-a.e.\ on $[0,1)$. It follows that $\mu$ is equivalent to the Lebesgue measure on $\mathcal{B}([0,1))$. Moreover, the entropy equals $\log(\beta_{n-1}\cdots\beta_0)$ since $T_{\beta_{n-1}}\circ \cdots \circ T_{\beta_0}$ is a piecewise linear transformation of constant slope $\beta_{n-1}\cdots\beta_0$ \cite{Dajani&Kalle:2021,Rohlin:1961}.
\end{proof}

The following consequence of Theorem~\ref{Thm : mu} will be useful for proving our generalization of Theorem~\ref{Thm : uniqueMeasure}.

\begin{corollary}
\label{Cor : lambda}
Let $n\in\N_{\ge 1}$ and $\beta_0,\ldots,\beta_{n-1}>1$. Then for all $B\in\mathcal{B}([0,1))$ such that $(T_{\beta_{n-1}}\circ \cdots \circ T_{\beta_0})^{-1}(B)=B$, we have $\lambda(B)\in\{0,1\}$.
\end{corollary}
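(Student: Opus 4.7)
The plan is to invoke Theorem~\ref{Thm : mu} directly and then transfer the conclusion from the invariant measure $\mu$ to Lebesgue measure $\lambda$ using the equivalence of the two measures.

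First, let $B\in\mathcal{B}([0,1))$ satisfy $(T_{\beta_{n-1}}\circ\cdots\circ T_{\beta_0})^{-1}(B)=B$. By Theorem~\ref{Thm : mu}, there is a unique absolutely continuous $(T_{\beta_{n-1}}\circ\cdots\circ T_{\beta_0})$-invariant probability measure $\mu$, and the associated dynamical system is exact. Since exactness implies ergodicity, the invariance condition on $B$ forces $\mu(B)\in\{0,1\}$.

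Next, I would use the equivalence between $\mu$ and $\lambda$ (which is also part of the conclusion of Theorem~\ref{Thm : mu}) to lift this dichotomy back to Lebesgue measure. If $\mu(B)=0$, then $\lambda(B)=0$ by absolute continuity of $\lambda$ with respect to $\mu$. If $\mu(B)=1$, then $\mu([0,1)\setminus B)=0$, so by absolute continuity of $\lambda$ with respect to $\mu$ again we obtain $\lambda([0,1)\setminus B)=0$, and therefore $\lambda(B)=\lambda([0,1))=1$. In either case $\lambda(B)\in\{0,1\}$.

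Since both steps are immediate consequences of results already stated, there is no real obstacle: the corollary is a repackaging of ergodicity of $\mu$ plus equivalence $\mu\sim\lambda$. The only minor point to be careful about is not confusing absolute continuity of $\mu$ with respect to $\lambda$ (which is what Theorem~\ref{Thm : mu} gives directly) with its converse; the converse is needed for the case $\mu(B)=1$ and is supplied by the equivalence statement in Theorem~\ref{Thm : mu}.
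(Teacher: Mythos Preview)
Your proof is correct and follows exactly the approach the paper intends: the paper states this corollary as an immediate consequence of Theorem~\ref{Thm : mu} without writing out a proof, and your argument---ergodicity of $\mu$ gives $\mu(B)\in\{0,1\}$, then equivalence $\mu\sim\lambda$ transfers this to $\lambda(B)\in\{0,1\}$---is precisely the intended unpacking. Your caution about needing the full equivalence (not just $\mu\ll\lambda$) is well placed.
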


For each $i\in\Int$, we let $\mu_{\B,i}$ denote the unique $(T_{\beta_{i-1}}\circ\cdots \circ T_{\beta_{i-p}})$-invariant absolutely continuous probability measure given by Theorem~\ref{Thm : mu}. We use the convention that for all $n\in \Z$, $\mu_{\B,n}=\mu_{\B,n \bmod p}$.  Let us define a probability measure $\mu_{\B}$ on the $\sigma$-algebra 
\[
	\mathcal{T}_p
	=\Bigg\{
	\bigcup_{i=0}^{p-1} (\{i\}\times B_i)				\colon \forall i\in\Int,\ B_i\in\mathcal{B}([0,1))
	\Bigg\}
\] 
over $\Int\times [0,1)$ as follows. 
For all $B_0,\ldots,B_{p-1}\in\mathcal{B}([0,1))$, we set
\[
	\mu_{\B}\Bigg(\bigcup_{i=0}^{p-1}(\{i\}\times B_i)\Bigg)
	=\frac{1}{p}\sum_{i=0}^{p-1}\mu_{\B,i}(B_i).
\]

We now study the properties of the probability measure $\mu_{\B}$.

\begin{lemma}
\label{Lem : Julien}
For $i\in\Int$, we have $\mu_{\B,i}=\mu_{\B,i-1}\circ T_{\beta_{i-1}}^{-1}$.
\end{lemma}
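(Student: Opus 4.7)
The plan is to apply the uniqueness part of Theorem~\ref{Thm : mu} to the pushforward measure $\nu := \mu_{\B,i-1}\circ T_{\beta_{i-1}}^{-1}$. So I would verify three things: (a) $\nu$ is a probability measure on $\mathcal{B}([0,1))$, (b) $\nu$ is absolutely continuous with respect to the Lebesgue measure, and (c) $\nu$ is invariant under the composition $T_{\beta_{i-1}}\circ T_{\beta_{i-2}}\circ \cdots \circ T_{\beta_{i-p}}$. Once (a)--(c) are established, uniqueness in Theorem~\ref{Thm : mu} forces $\nu = \mu_{\B,i}$.

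Point (a) is immediate since $T_{\beta_{i-1}}^{-1}([0,1))=[0,1)$. For (b), given any $B\in\mathcal{B}([0,1))$ with $\lambda(B)=0$, non-singularity of the single real-base transformation $T_{\beta_{i-1}}$ (see Remark~\ref{Rem : NonSing}) yields $\lambda(T_{\beta_{i-1}}^{-1}(B))=0$, and absolute continuity of $\mu_{\B,i-1}$ then gives $\mu_{\B,i-1}(T_{\beta_{i-1}}^{-1}(B))=0$, i.e., $\nu(B)=0$.

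For (c), which is the conceptual heart of the argument, I would use the index convention $\beta_{i-1-p}=\beta_{i-1}$ to rewrite the composition defining the invariance of $\mu_{\B,i-1}$. Set
\[
 S = T_{\beta_{i-1}}\circ T_{\beta_{i-2}}\circ \cdots \circ T_{\beta_{i-p}},
 \qquad
 S' = T_{\beta_{i-2}}\circ \cdots \circ T_{\beta_{i-p}}\circ T_{\beta_{i-1}}.
\]
By definition $\mu_{\B,i-1}$ is $S'$-invariant (applying the convention $\beta_{i-1-p}=\beta_{i-1}$). The key observation is the ``cyclic shift'' identity $S\circ T_{\beta_{i-1}} = T_{\beta_{i-1}}\circ S'$, which is obvious by inspection. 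Taking preimages gives $(S\circ T_{\beta_{i-1}})^{-1}(B)=(S')^{-1}(T_{\beta_{i-1}}^{-1}(B))$ for every $B\in\mathcal{B}([0,1))$, hence
\[
 \nu(S^{-1}(B))
 = \mu_{\B,i-1}\bigl((S')^{-1}(T_{\beta_{i-1}}^{-1}(B))\bigr)
 = \mu_{\B,i-1}(T_{\beta_{i-1}}^{-1}(B))
 = \nu(B),
\]
where the middle equality uses the $S'$-invariance of $\mu_{\B,i-1}$. This shows $\nu$ is $S$-invariant.

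I do not anticipate a real obstacle: the argument is essentially a one-line pushforward-plus-uniqueness. The only subtle point is being careful with the cyclic convention on indices so that the invariance under $S'$ matches correctly with the invariance one wants for $S$; once the identity $S\circ T_{\beta_{i-1}} = T_{\beta_{i-1}}\circ S'$ is in place, everything follows from Theorem~\ref{Thm : mu} and Remark~\ref{Rem : NonSing}.
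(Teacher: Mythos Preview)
Your proposal is correct and follows essentially the same approach as the paper: verify that the pushforward $\mu_{\B,i-1}\circ T_{\beta_{i-1}}^{-1}$ is a probability measure, absolutely continuous (via the non-singularity Remark~\ref{Rem : NonSing}), and invariant under $T_{\beta_{i-1}}\circ\cdots\circ T_{\beta_{i-p}}$ via the cyclic-shift identity, then conclude by the uniqueness in Theorem~\ref{Thm : mu}. The paper carries out exactly this computation, only writing the invariance check directly without naming the auxiliary maps $S$ and $S'$.
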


\begin{proof}
Let $i\in\Int$. By definition of $\mu_{\B,i}$, it suffices to show that $\mu_{\B,i-1}\circ T_{\beta_{i-1}}^{-1}$ is a $(T_{\beta_{i-1}}\circ \cdots \circ T_{\beta_{i-p}})$-invariant absolutely continuous probability measure on $\mathcal{B}([0,1))$. First, we have $\mu_{\B,i-1}\big(T_{\beta_{i-1}}^{-1}([0,1))\big)=\mu_{\B,i-1}([0,1))=1$. Second, for all $B\in\mathcal{B}([0,1))$, we have
\begin{align*}
	\mu_{\B,i-1}\circ T_{\beta_{i-1}}^{-1}\big((T_{\beta_{i-1}}\circ \cdots \circ T_{\beta_{i-p}})^{-1}(B)\big)
	&=\mu_{\B,i-1}\big((T_{\beta_{i-1}}\circ \cdots \circ T_{\beta_{i-p}}\circ T_{\beta_{i-p-1}})^{-1}(B)\big)\\
	&=\mu_{\B,i-1}\big((T_{\beta_{i-2}}\circ \cdots \circ T_{\beta_{i-p-1}})^{-1}(T_{\beta_{i-1}}^{-1}(B))\big)\\
	&=\mu_{\B,i-1}\big(T_{\beta_{i-1}}^{-1}(B)\big).
\end{align*}
Third, for all $B\in\mathcal{B}([0,1))$ such that $\lambda(B)=0$, we get that $\lambda(T_{\beta_{i-1}}^{-1}(B))=0$ by Remark~\ref{Rem : NonSing}, and hence that $\mu_{\B,i-1}(T_{\beta_{i-1}}^{-1}(B))=0$ since $\mu_{\B,i-1}$ is absolutely continuous. 
\end{proof}

\begin{proposition}
\label{Pro : MuBInvariant}
The measure $\mu_{\B}$ is $T_{\B}$-invariant.
\end{proposition}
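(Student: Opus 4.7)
The plan is to verify the invariance identity $\mu_{\B}(T_{\B}^{-1}(A)) = \mu_{\B}(A)$ directly on sets $A$ of the form $\bigcup_{i=0}^{p-1}(\{i\}\times B_i)$ with $B_i\in\mathcal{B}([0,1))$, since such sets comprise all of $\mathcal{T}_p$. The key calculation is to unfold what $T_{\B}^{-1}$ does on these ``slice'' sets, and then apply Lemma~\ref{Lem : Julien} to relate the measures $\mu_{\B,i}$ to one another.

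First I would compute preimages slice by slice. Using the definition $T_{\B}(i,x) = ((i+1)\bmod p, T_{\beta_i}(x))$, the preimage of $\{i\}\times B_i$ consists of those $(j,y)$ with $j\equiv i-1 \pmod p$ and $T_{\beta_{i-1}}(y)\in B_i$, so
\[
T_{\B}^{-1}(\{i\}\times B_i) = \{(i-1)\bmod p\}\times T_{\beta_{i-1}}^{-1}(B_i).
\]
Reindexing with $j=(i-1)\bmod p$ yields
\[
T_{\B}^{-1}\Bigl(\bigcup_{i=0}^{p-1}(\{i\}\times B_i)\Bigr)
= \bigcup_{j=0}^{p-1}\bigl(\{j\}\times T_{\beta_j}^{-1}(B_{j+1})\bigr),
\]
where the subscript $j+1$ is read modulo $p$ (a legitimate manipulation because we have already adopted the cyclic convention $\mu_{\B,n}=\mu_{\B,n\bmod p}$ and $\beta_n=\beta_{n\bmod p}$).

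Next I would apply the definition of $\mu_{\B}$ and then Lemma~\ref{Lem : Julien}. Indeed, the definition gives
\[
\mu_{\B}\Bigl(T_{\B}^{-1}\Bigl(\bigcup_{i=0}^{p-1}(\{i\}\times B_i)\Bigr)\Bigr)
= \frac{1}{p}\sum_{j=0}^{p-1}\mu_{\B,j}\bigl(T_{\beta_j}^{-1}(B_{j+1})\bigr),
\]
and Lemma~\ref{Lem : Julien}, read with index shifted by one (i.e., $\mu_{\B,j+1}=\mu_{\B,j}\circ T_{\beta_j}^{-1}$), rewrites each summand as $\mu_{\B,j+1}(B_{j+1})$. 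A final cyclic reindexing $i=j+1\bmod p$ collapses this to $\frac{1}{p}\sum_{i=0}^{p-1}\mu_{\B,i}(B_i)$, which is exactly $\mu_{\B}\bigl(\bigcup_{i=0}^{p-1}(\{i\}\times B_i)\bigr)$.

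I do not expect a serious obstacle; the only point to watch is that the slice-wise lemma is invoked with the correct index shift, and that the sum is genuinely over a full period so the cyclic reindexing is legitimate. The whole proof reduces to a bookkeeping argument built on top of Lemma~\ref{Lem : Julien}, which has already done the analytic work of relating the measures $\mu_{\B,i}$ via the pushforward by $T_{\beta_{i-1}}$.
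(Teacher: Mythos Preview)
Your proposal is correct and follows essentially the same approach as the paper: compute $T_{\B}^{-1}$ slice by slice, apply the definition of $\mu_{\B}$, and invoke Lemma~\ref{Lem : Julien} to match the sums. The only cosmetic difference is that you reindex explicitly via $j=(i-1)\bmod p$, whereas the paper keeps the original index $i$ and writes $\mu_{\B,i-1}(T_{\beta_{i-1}}^{-1}(B_i))$ directly before applying the lemma.
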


\begin{proof}
For all $B_0,\ldots,B_{p-1}\in \mathcal{B}([0,1))$, 
\begin{align*}
	\mu_{\B}\Bigg(T_{\B}^{-1}
	\Bigg( \bigcup_{i=0}^{p-1} (\{i\}\times B_i)\Bigg)
	\Bigg)
	&=\mu_{\B}\Bigg(
	\bigcup_{i=0}^{p-1}T_{\B}^{-1}(\{i\}\times B_i)
	\Bigg)\\
	&=\mu_{\B}\Bigg(
	\bigcup_{i=0}^{p-1}\big(\{(i-1)\bmod p\}\times T_{\beta_{i-1}}^{-1}(B_i)\big)
	\Bigg)\\	
	&=\frac{1}{p}\sum_{i=0}^{p-1}\mu_{\B,i-1}(T_{\beta_{i-1}}^{-1}(B_i))\\
	&=\frac{1}{p}\sum_{i=0}^{p-1}\mu_{\B,i}(B_i)\\
	&=	\mu_{\B}\Bigg(
	\bigcup_{i=0}^{p-1} (\{i\}\times B_i)
	\Bigg)
\end{align*}
where we applied Lemma~\ref{Lem : Julien} for the fourth equality.
\end{proof}

\begin{corollary}
\label{Cor : GreedyDynamicalSystem}
The quadruple $\big( \Int \times [0,1),\mathcal{T}_p,\mu_{\B},T_{\B}\big)$ is a dynamical system.
\end{corollary}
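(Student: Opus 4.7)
The plan is to unwind the definition of a dynamical system from Section~\ref{Section : Preliminaries} and check each clause. Three things remain after the work already done: that $\mathcal{T}_p$ is a $\sigma$-algebra on $\Int\times[0,1)$, that $\mu_{\B}$ is a probability measure on $\mathcal{T}_p$, and that $T_{\B}$ is $(\mathcal{T}_p,\mathcal{T}_p)$-measurable; the invariance clause $\mu_{\B}\circ T_{\B}^{-1}=\mu_{\B}$ is already Proposition~\ref{Pro : MuBInvariant}.

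First I would verify that $\mathcal{T}_p$ is a $\sigma$-algebra. Since a set of the form $\bigcup_{i=0}^{p-1}(\{i\}\times B_i)$ is determined by the $p$-tuple $(B_0,\ldots,B_{p-1})$ of Borel subsets of $[0,1)$, closure under countable unions and under complements in $\Int\times[0,1)$ reduces fiber by fiber to the corresponding closure properties of $\mathcal{B}([0,1))$. Concretely, $\bigcup_n\bigcup_i(\{i\}\times B_{i,n})=\bigcup_i\bigl(\{i\}\times\bigcup_n B_{i,n}\bigr)$ and $(\Int\times[0,1))\setminus\bigcup_i(\{i\}\times B_i)=\bigcup_i\bigl(\{i\}\times([0,1)\setminus B_i)\bigr)$, and both again lie in $\mathcal{T}_p$.

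Second, for $\mu_{\B}$: countable additivity on $\mathcal{T}_p$ is inherited fiberwise from the countable additivity of each $\mu_{\B,i}$, since a disjoint union in $\mathcal{T}_p$ restricted to the $i$-th fiber is a disjoint union in $\mathcal{B}([0,1))$. Normalization is immediate,
\[
\mu_{\B}(\Int\times[0,1))=\frac{1}{p}\sum_{i=0}^{p-1}\mu_{\B,i}([0,1))=1,
\]
since each $\mu_{\B,i}$ is a probability measure by Theorem~\ref{Thm : mu}. Third, the measurability of $T_{\B}$ is already contained in the computation carried out in the proof of Proposition~\ref{Pro : MuBInvariant}, namely
\[
T_{\B}^{-1}\Bigg(\bigcup_{i=0}^{p-1}(\{i\}\times B_i)\Bigg)
=\bigcup_{i=0}^{p-1}\bigl(\{(i-1)\bmod p\}\times T_{\beta_{i-1}}^{-1}(B_i)\bigr),
\]
which belongs to $\mathcal{T}_p$ because each real-base greedy transformation $T_{\beta_{i-1}}$ is Borel measurable on $[0,1)$.

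There is no genuine obstacle here: every substantive ingredient has been done elsewhere (Theorem~\ref{Thm : mu}, Lemma~\ref{Lem : Julien}, Proposition~\ref{Pro : MuBInvariant}), and what is left is purely an assembly step unpacking the definition of a dynamical system. Accordingly I would present the corollary as a short sequence of one-line verifications culminating in a reference to Proposition~\ref{Pro : MuBInvariant} for the measure-preservation clause.
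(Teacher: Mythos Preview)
Your proposal is correct and matches the paper's approach: the paper states this corollary without proof, since $\mathcal{T}_p$ and $\mu_{\B}$ are introduced in the text as a $\sigma$-algebra and a probability measure respectively, and the measure-preservation clause is exactly Proposition~\ref{Pro : MuBInvariant}. You are simply making explicit the routine verifications the paper leaves implicit.
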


Let us define a new measure $\lambda_p$ over the $\sigma$-algebra $\mathcal{T}_p$. For all $B_0,\ldots,B_{p-1}\in\mathcal{B}([0,1))$, we set
\[
	\lambda_p\Bigg(\bigcup_{i=0}^{p-1} (\{i\}\times B_i)\Bigg)
	=\frac{1}{p}\sum_{i=0}^{p-1}\lambda(B_i).
\]
We call this measure the \emph{$p$-Lebesgue measure} on $\mathcal{T}_p$. 

\begin{proposition}
\label{Pro : Equiv}
The measure $\mu_{\B}$ is equivalent to the $p$-Lebesgue measure on $\mathcal{T}_p$. 
\end{proposition}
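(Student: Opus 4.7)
The plan is to reduce the equivalence of $\mu_{\B}$ and $\lambda_p$ directly to the fact, already established in Theorem~\ref{Thm : mu}, that for each $i\in\Int$ the measure $\mu_{\B,i}$ is equivalent to the Lebesgue measure $\lambda$ on $\mathcal{B}([0,1))$. Since every set $A\in\mathcal{T}_p$ has the canonical form $A=\bigcup_{i=0}^{p-1}(\{i\}\times B_i)$ with $B_i\in\mathcal{B}([0,1))$, the measures $\mu_{\B}(A)$ and $\lambda_p(A)$ are just the (normalized) sums of the nonnegative quantities $\mu_{\B,i}(B_i)$ and $\lambda(B_i)$ respectively. The equivalence is then a fiberwise statement.

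First I would fix $A=\bigcup_{i=0}^{p-1}(\{i\}\times B_i)\in\mathcal{T}_p$ and observe that
\[
\lambda_p(A)=0\iff \lambda(B_i)=0\text{ for every }i\in\Int,
\qquad
\mu_{\B}(A)=0\iff \mu_{\B,i}(B_i)=0\text{ for every }i\in\Int,
\]
because each term in the defining sums of $\lambda_p(A)$ and $\mu_{\B}(A)$ is nonnegative. Next I would invoke Theorem~\ref{Thm : mu}, which asserts that for every $i\in\Int$ the measure $\mu_{\B,i}$ is equivalent to $\lambda$ on $\mathcal{B}([0,1))$; hence $\lambda(B_i)=0$ if and only if $\mu_{\B,i}(B_i)=0$. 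Combining the two equivalences yields $\lambda_p(A)=0\iff \mu_{\B}(A)=0$, which is exactly the statement that $\mu_{\B}$ and $\lambda_p$ are mutually absolutely continuous.

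I do not anticipate any real obstacle here: the proof is essentially a one-line verification once one unfolds the definitions of $\mu_{\B}$ and $\lambda_p$ and applies Theorem~\ref{Thm : mu} coordinatewise. The only point that merits writing out is the observation that the vanishing of a finite sum of nonnegative numbers forces each summand to vanish, so that the equivalence can be checked fiber by fiber on each copy $\{i\}\times[0,1)$.
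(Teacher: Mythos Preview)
Your proposal is correct and follows exactly the approach of the paper, which simply states that the result follows from the fact that each $\mu_{\B,i}$ is equivalent to the Lebesgue measure on $\mathcal{B}([0,1))$. You have merely spelled out the fiberwise argument in more detail than the paper does.
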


\begin{proof}
This follows from the fact that the $p$ measures $\mu_{\B,0},\ldots,\mu_{\B,p-1}$ are equivalent to the Lebesgue measure $\lambda$ on $\mathcal{B}([0,1))$.
\end{proof}

Next, we compute the entropy of the dynamical  system $\big( \Int \times [0,1),\mathcal{T}_p, \mu_{\B}, T_{\B}\big)$. To do so, we consider the $p$ induced transformations 
\[
	T_{\B,i}\colon {\{i\}\times[0,1)}\to {\{i\}\times[0,1)},\
	(i,x)\mapsto T_{\B}^p(i,x)
\]
for $i\in\Int$. Note that indeed, for all $(i,x)\in\Int\times[0,1)$, the first return of $(i,x)$ to ${\{i\}\times[0,1)}$ is equal to $p$. Thus $T_{\B,i}={T_{\B}^p}_{\big| \{i\} \times [0,1)}$. As is well know~\cite{Dajani&Kalle:2021}, for each $i\in\Int$, the induced transformation $T_{\B,i}$ is measure preserving with respect to the measure $\nu_{\B,i}$ on the $\sigma$-algebra $\{\{i\}\times B\colon B\in\mathcal{B}([0,1))\}$ defined as follows: for all $B \in \mathcal{B}([0,1))$,
\[
	\nu_{\B,i}(\{i\}\times B)=p \mu_{\B}(\{i\}\times B).
\] 

\begin{lemma}
\label{Lem : Entropy}
For every $i\in\Int$, the map ${\delta_i}_{\big|[0,1)}\colon [0,1)\to \{i\}\times[0,1),\ x\mapsto (i,x)$ defines an isomorphism between the dynamical systems 
\[
	\big(
	[0,1),
	\mathcal{B}([0,1)),
	\mu_{\B,i},
	T_{\beta_{i-1}}\circ\cdots\circ T_{\beta_{i-p}}
	\big)
\] 
and 
\[
	\big(
	\{i\} \times [0,1),
	\{\{i\}\times B\colon B\in\mathcal{B}([0,1))\},
	\nu_{\B,i},
	T_{\B,i}
	\big)
\]
\end{lemma}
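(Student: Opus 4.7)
The plan is to verify the three defining conditions for a measure-preserving isomorphism in turn: that $\delta_i|_{[0,1)}$ is $\mu_{\B,i}$-a.e.\ injective and measurable, that it transports $\mu_{\B,i}$ to $\nu_{\B,i}$, and that it conjugates the base transformation to the induced one. Bijectivity of $\delta_i|_{[0,1)}\colon x\mapsto(i,x)$ onto $\{i\}\times[0,1)$ is immediate, and measurability in both directions is built into the choice of $\sigma$-algebra, since $\delta_i^{-1}(\{i\}\times B)=B$ for every $B\in\mathcal{B}([0,1))$.

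For the measure comparison, I would compute directly from the definitions: for any $B\in\mathcal{B}([0,1))$,
\[
	\nu_{\B,i}(\{i\}\times B)
	=p\,\mu_{\B}(\{i\}\times B)
	=p\cdot\frac{1}{p}\mu_{\B,i}(B)
	=\mu_{\B,i}(B)
	=\mu_{\B,i}\circ\delta_i^{-1}(\{i\}\times B),
\]
where the middle equality uses the averaging definition of $\mu_{\B}$ applied to the decomposition $\bigcup_{j=0}^{p-1}(\{j\}\times B_j)$ with $B_i=B$ and $B_j=\emptyset$ for $j\neq i$.

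Finally, to verify the conjugation I would unroll the formula for $T_{\B}$ from~\eqref{Eq : TB}: starting at $(i,x)$, each application advances the first coordinate modulo $p$ while applying $T_{\beta_i}$, then $T_{\beta_{i+1}}$, and so on to the second coordinate. After $p$ iterations the first coordinate returns to $i$ and the second becomes $T_{\beta_{i+p-1}}\circ\cdots\circ T_{\beta_i}(x)$, which under the convention $\beta_n=\beta_{n\bmod p}$ is exactly $T_{\beta_{i-1}}\circ\cdots\circ T_{\beta_{i-p}}(x)$. Hence $T_{\B,i}\circ\delta_i(x)=\delta_i\bigl(T_{\beta_{i-1}}\circ\cdots\circ T_{\beta_{i-p}}(x)\bigr)$ pointwise, not merely $\mu_{\B,i}$-almost everywhere. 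There is no substantive obstacle here: the lemma is essentially a bookkeeping exercise, and the only subtle point is lining up the modular indexing of $\beta_n$ with the order of composition produced by iterating $T_{\B}$, which the convention handles automatically.
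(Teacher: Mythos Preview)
Your verification is correct and matches the paper's approach: the paper's own proof consists solely of the sentence ``This is a straightforward verification,'' and you have simply carried out that verification in full. The three checks you perform---bi-measurability of $\delta_i|_{[0,1)}$, the measure identity $\nu_{\B,i}=\mu_{\B,i}\circ\delta_i^{-1}$ via the averaging definition of $\mu_{\B}$, and the pointwise conjugation $T_{\B,i}\circ\delta_i=\delta_i\circ(T_{\beta_{i-1}}\circ\cdots\circ T_{\beta_{i-p}})$ by unrolling~\eqref{Eq : TB}---are exactly the details the paper leaves to the reader.
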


\begin{proof}
This is a straightforward verification.
\end{proof}

\begin{proposition}
\label{Pro : Entropy}
The entropy $h_{\B}$ of the dynamical system $\big( \Int \times [0,1),\mathcal{T}_p, \mu_{\B}, T_{\B}\big)$ is $\frac{1}{p} \log(\beta_{p-1}\cdots \beta_0)$.
\end{proposition}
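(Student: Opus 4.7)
The plan is to use Abramov's formula for the entropy of an induced transformation. Specifically, if $(X,\mathcal{F},\mu,T)$ is a dynamical system and $A\in\mathcal{F}$ has $\mu(A)>0$, then the first return map $T_A$ on $A$, with the normalized measure $\mu_A=\mu|_A/\mu(A)$, satisfies
\[
    h_{\mu_A}(T_A)=\frac{h_\mu(T)}{\mu(A)}.
\]

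First, I would apply this to $A=\{0\}\times[0,1)$. From the definition of $T_{\B}$ in~\eqref{Eq : TB}, iterating $T_{\B}$ cyclically shifts the first coordinate, so the first return time to $A$ is identically $p$. Consequently the first return map is precisely $T_{\B,0}={T_{\B}^p}_{\big|\{0\}\times[0,1)}$, and by definition the normalized restriction of $\mu_{\B}$ to $A$ is exactly $\nu_{\B,0}$ (since $\mu_{\B}(A)=\frac{1}{p}$). Abramov's formula therefore gives
\[
    h_{\nu_{\B,0}}(T_{\B,0})=p\,h_{\B}.
\]

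Next, I would compute the left hand side using the tools already built. By Lemma~\ref{Lem : Entropy}, the dynamical system $(\{0\}\times[0,1),\nu_{\B,0},T_{\B,0})$ is measure-preservingly isomorphic to $([0,1),\mu_{\B,0},T_{\beta_{p-1}}\circ\cdots\circ T_{\beta_0})$, and isomorphic systems share the same entropy. By Theorem~\ref{Thm : mu}, the entropy of the latter equals $\log(\beta_{p-1}\cdots\beta_0)$. Combining this with the previous display yields $p\,h_{\B}=\log(\beta_{p-1}\cdots\beta_0)$, hence the desired formula.

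The only real subtlety is verifying the hypothesis that the first return time to $A$ is constantly $p$, so that the induced map coincides with the restriction of $T_{\B}^p$ and the normalized measure is indeed $\nu_{\B,i}$ as set up before the lemma; this is however immediate from the cyclic action of $T_{\B}$ on the first coordinate, and was implicitly used already in the definition of $T_{\B,i}$. Everything else is a direct chain of citations: Abramov's formula on one side, Lemma~\ref{Lem : Entropy} followed by Theorem~\ref{Thm : mu} on the other.
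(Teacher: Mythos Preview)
Your proof is correct and follows essentially the same approach as the paper: both apply Abramov's formula to the induced transformation on $\{i\}\times[0,1)$ (you take $i=0$, the paper leaves $i$ arbitrary), then invoke Lemma~\ref{Lem : Entropy} and Theorem~\ref{Thm : mu} to identify the entropy of the induced system as $\log(\beta_{p-1}\cdots\beta_0)$.
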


\begin{proof}
Let $i\in\Int$. By Abramov's formula~\cite{Abramov:1959}, we have
\[
	h_{\B}
	=\mu_{\B}(\{i\} \times [0,1)) \, h_{\B,i}
	=\frac1p \, h_{\B,i}.
\]
where $h_{\B,i}$ denotes the entropy of the induced dynamical system $\big(\{i\} \times [0,1),\{\{i\} \times B\colon B\in \mathcal{B}([0,1))\}, \nu_{\B,i},T_{\B,i})$. Since the entropy is an isomorphism invariant, it follows from Theorem~\ref{Thm : mu} and Lemma~\ref{Lem : Entropy} that $h_{\B,i}= \log(\beta_{p-1}\cdots \beta_0)$. Hence the conclusion.
\end{proof}

Finally, we prove that any $T_{\B}$-invariant set has $p$-Lebesgue measure $0$ or $1$.

\begin{proposition}
\label{Pro : pseudo-Ergodic}
For all $A\in\mathcal{T}_p$ such that $T_{\B}^{-1}(A)=A$, we have $\lambda_p(A)\in\{0,1\}$.
\end{proposition}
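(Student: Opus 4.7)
The plan is to unfold any invariant set $A \in \mathcal{T}_p$ into its $p$ ``fibers'' over $[0,1)$, reduce $T_{\B}$-invariance to a condition on each fiber that is handled by Corollary~\ref{Cor : lambda}, and then use non-singularity of each $T_{\beta_i}$ to pass between fibers.

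First, I write $A = \bigcup_{i=0}^{p-1}(\{i\}\times B_i)$ with $B_0,\ldots,B_{p-1}\in\mathcal{B}([0,1))$. Using the definition of $T_{\B}$ in~\eqref{Eq : TB}, a direct computation gives
\[
T_{\B}^{-1}(A) = \bigcup_{i=0}^{p-1}\bigl(\{i\}\times T_{\beta_i}^{-1}(B_{(i+1)\bmod p})\bigr).
\]
The hypothesis $T_{\B}^{-1}(A)=A$ therefore translates, fiber by fiber, into the $p$ equalities $B_i = T_{\beta_i}^{-1}(B_{(i+1)\bmod p})$ for $i\in\Int$.

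Second, iterating these equalities $p$ times along the cycle $0\to 1\to\cdots\to p-1\to 0$, I obtain for every $i\in\Int$ the identity
\[
B_i = (T_{\beta_{i-1}}\circ T_{\beta_{i-2}}\circ\cdots\circ T_{\beta_{i-p}})^{-1}(B_i).
\]
Corollary~\ref{Cor : lambda}, applied to the $p$-tuple of bases $(\beta_{i-p},\ldots,\beta_{i-1})$, then yields $\lambda(B_i)\in\{0,1\}$ for each $i$.

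Third, and this is the only subtle point, I need to rule out the mixed case where some $B_i$ have Lebesgue measure $0$ and others have Lebesgue measure $1$, since such a configuration would give $\lambda_p(A)$ a value strictly between $0$ and $1$. Here I use the relation $B_i = T_{\beta_i}^{-1}(B_{(i+1)\bmod p})$ together with the non-singularity of $T_{\beta_i}$ with respect to the Lebesgue measure (Remark~\ref{Rem : NonSing}). If $\lambda(B_{(i+1)\bmod p})=0$, then $\lambda(B_i) = \lambda(T_{\beta_i}^{-1}(B_{(i+1)\bmod p})) = 0$; conversely, if $\lambda(B_{(i+1)\bmod p})=1$, then its complement $[0,1)\setminus B_{(i+1)\bmod p}$ is Lebesgue-null, hence so is $T_{\beta_i}^{-1}([0,1)\setminus B_{(i+1)\bmod p}) = [0,1)\setminus B_i$, giving $\lambda(B_i)=1$. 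Cycling around $\Int$ shows that all the values $\lambda(B_0),\ldots,\lambda(B_{p-1})$ coincide, so their common value is $0$ or $1$, and hence $\lambda_p(A) = \frac{1}{p}\sum_{i=0}^{p-1}\lambda(B_i)\in\{0,1\}$.

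The main obstacle is exactly this third step: Corollary~\ref{Cor : lambda} is a statement for a single composition $T_{\beta_{p-1}}\circ\cdots\circ T_{\beta_0}$ and hence only constrains each fiber $B_i$ separately, so without the non-singularity argument one would only conclude $\lambda_p(A)\in\{0,\tfrac{1}{p},\tfrac{2}{p},\ldots,1\}$. The non-singularity step, which is where the Lebesgue structure of the individual $T_{\beta_i}$ enters, is what closes the argument.
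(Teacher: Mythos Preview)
Your proof is correct and follows essentially the same route as the paper: decompose $A$ into fibers $B_i$, derive $B_i = T_{\beta_i}^{-1}(B_{(i+1)\bmod p})$ from $T_{\B}$-invariance, iterate $p$ times to get each $B_i$ invariant under the full composition $T_{\beta_{i-1}}\circ\cdots\circ T_{\beta_{i-p}}$, apply Corollary~\ref{Cor : lambda}, and then use non-singularity (Remark~\ref{Rem : NonSing}) to rule out the mixed case. Your third step is in fact spelled out a bit more carefully than in the paper, which simply states that non-singularity gives $\lambda(B_i)=0\iff\lambda(B_{i+1})=0$.
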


\begin{proof}
Let $B_0,\ldots,B_{p-1}$ be sets in $\mathcal{B}([0,1))$ such that 
\[
	T_{\B}^{-1}\Bigg(\bigcup_{i=0}^{p-1}(\{i\}\times B_i)\Bigg)
	=\bigcup_{i=0}^{p-1}(\{i\}\times B_i).
\]
This implies that 
\begin{equation}
\label{Eq : propErgodic}
	T_{\beta_{i-1}}^{-1}(B_i)=B_{(i-1)\bmod p}
	\quad\text{for all }i\in\Int.
\end{equation}
We use the convention that $B_n=B_{n \bmod p}$ for all $n \in \Z$. An easy induction yields that for all $i \in \Int$ and $n\in \N$, $(T_{\beta_{i-1}}\circ \cdots\circ T_{\beta_{i-n}})^{-1}(B_i)=B_{i-n}$. In particular, for $n=p$, we get that for each $i \in \Int$, $(T_{\beta_{i-1}}\circ \cdots\circ T_{\beta_{i-p}})^{-1}(B_i)=B_i$. By Corollary~\ref{Cor : lambda}, for each $i\in\Int$, $\lambda(B_{i})\in\{0,1\}$. By definition of $\lambda_p$, in order to conclude, it suffices to show that either $\lambda(B_i)=0$ for all $i\in\Int$, or $\lambda(B_i)=1$ for all $i\in\Int$. From~\eqref{Eq : propErgodic} and Remark~\ref{Rem : NonSing}, we get that for each $i\in\Int$, $\lambda(B_i)=0$ if and only if $\lambda(B_{i+1})=0$. The conclusion follows.
\end{proof}

We are now able to state the announced generalization of Theorem~\ref{Thm : uniqueMeasure} to alternate bases.

\begin{theorem}
\label{Thm : GlobalTheoremAlternateBase}
The measure $\mu_{\B}$ is the unique $T_{\B}$-invariant probability measure on $\mathcal{T}_p$ that is absolutely continuous with respect to $\lambda_p$. Furthermore, $\mu_{\B}$ is equivalent to $\lambda_p$ on $\mathcal{T}_p$ and the dynamical system $( \Int \times [0,1), \mathcal{T}_p, \mu_{\B}, T_{\B})$ is ergodic and has entropy $\frac{1}{p}\log(\beta_{p-1}\cdots\beta_0)$.
\end{theorem}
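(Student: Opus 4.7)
The plan is to harvest the results already assembled in the section. Proposition~\ref{Pro : MuBInvariant} furnishes the $T_{\B}$-invariance of $\mu_{\B}$, Proposition~\ref{Pro : Equiv} furnishes its equivalence with $\lambda_p$, and Proposition~\ref{Pro : Entropy} furnishes the entropy value $\tfrac{1}{p}\log(\beta_{p-1}\cdots\beta_0)$. Only two assertions remain to be established, namely ergodicity and uniqueness.

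Ergodicity should be immediate: if $A\in\mathcal{T}_p$ satisfies $T_{\B}^{-1}(A)=A$, then Proposition~\ref{Pro : pseudo-Ergodic} forces $\lambda_p(A)\in\{0,1\}$, and the equivalence $\mu_{\B}\sim\lambda_p$ given by Proposition~\ref{Pro : Equiv} transfers this to $\mu_{\B}(A)\in\{0,1\}$.

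For uniqueness, I would let $\nu$ be any $T_{\B}$-invariant probability measure on $\mathcal{T}_p$ that is absolutely continuous with respect to $\lambda_p$. First I would pin down the weights of the sheets. Since $T_{\beta_{i-1}}$ maps $[0,1)$ into $[0,1)$, we have $T_{\B}^{-1}\bigl(\{i\}\times[0,1)\bigr)=\{(i-1)\bmod p\}\times[0,1)$, so $T_{\B}$-invariance of $\nu$ forces the $p$ numbers $\nu(\{i\}\times[0,1))$ to coincide, hence each equals $\tfrac{1}{p}$. Next, for each $i\in\Int$ I would set $\tilde{\nu}_i(B)=p\,\nu(\{i\}\times B)$ for $B\in\mathcal{B}([0,1))$, which defines a probability measure. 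Absolute continuity of $\nu$ with respect to $\lambda_p$ yields absolute continuity of $\tilde{\nu}_i$ with respect to Lebesgue, and $T_{\B}$-invariance of $\nu$ yields $T_{\B}^p$-invariance, which via the isomorphism $\delta_i$ of Lemma~\ref{Lem : Entropy} transfers to $(T_{\beta_{i-1}}\circ\cdots\circ T_{\beta_{i-p}})$-invariance of $\tilde{\nu}_i$. By the uniqueness part of Theorem~\ref{Thm : mu}, $\tilde{\nu}_i=\mu_{\B,i}$ for every $i$, and unwinding the definitions yields $\nu=\mu_{\B}$.

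No step of this plan looks genuinely hard; the only point requiring care is the routine bookkeeping involved in translating $T_{\B}$-invariance on the disjoint-union space $\Int\times[0,1)$ into sheet-by-sheet invariance for the induced map $T_{\B}^p$, and then passing through the isomorphism of Lemma~\ref{Lem : Entropy} to apply the single-sheet uniqueness from Theorem~\ref{Thm : mu}.
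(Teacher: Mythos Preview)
Your proof is correct and, for invariance, equivalence, ergodicity, and entropy, follows exactly the paper's route: Propositions~\ref{Pro : MuBInvariant}, \ref{Pro : Equiv}, \ref{Pro : Entropy}, and~\ref{Pro : pseudo-Ergodic} combined with $\mu_{\B}\sim\lambda_p$.

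The only genuine difference is in the uniqueness step. The paper, having already established ergodicity of $(\Int\times[0,1),\mathcal{T}_p,\mu_{\B},T_{\B})$, simply invokes the standard consequence of the Ergodic Theorem that an ergodic invariant probability measure is the unique invariant probability measure in its absolute-continuity class (citing \cite[Theorem~3.1.2]{Dajani&Kalle:2021}). You instead give a direct structural argument: show that any $T_{\B}$-invariant $\nu\ll\lambda_p$ assigns mass $\tfrac1p$ to each sheet, push $\nu$ down to each sheet to obtain an absolutely continuous $(T_{\beta_{i-1}}\circ\cdots\circ T_{\beta_{i-p}})$-invariant probability measure $\tilde\nu_i$, and then apply the sheet-wise uniqueness of Theorem~\ref{Thm : mu} to force $\tilde\nu_i=\mu_{\B,i}$. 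This is a perfectly valid alternative; it is more self-contained (no black-box appeal to the general ergodic-theoretic uniqueness principle) and mirrors the way $\mu_{\B}$ was built in the first place, at the cost of a few more lines of bookkeeping. The paper's route is shorter but relies on an external reference.
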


\begin{proof}
By Propositions~\ref{Pro : MuBInvariant} and~\ref{Pro : Equiv}, $\mu_{\B}$ is a $T_{\B}$-invariant probability measure that is absolutely continuous with respect to $\lambda_p$ on $\mathcal{B}([0,1))$. Then we get from Proposition~\ref{Pro : pseudo-Ergodic} that for all $A\in\mathcal{T}_p$ such that $T_{\B}^{-1}(A)=A$, we have $\mu_{\B}(A)\in\{0,1\}$. Therefore, the dynamical system $( \Int \times [0,1), \mathcal{T}_p, \mu_{\B}, T_{\B})$ is ergodic. Now, we obtain that the measure $\mu_{\B}$ is unique as a well-known consequence of the Ergodic Theorem, see~\cite[Theorem 3.1.2]{Dajani&Kalle:2021}. The equivalence between $\mu_{\B}$ and $\lambda_p$ and the entropy of the system were already obtained in Propositions~\ref{Pro : Equiv} and~\ref{Pro : Entropy}.
\end{proof}

For $p$ greater than $1$, the dynamical system $(\Int \times [0,1),\mathcal{T}_p,\mu_{\B}, T_{\B})$ is not exact even though for all $i\in \Int$, the dynamical systems $([0,1), \mathcal{B}([0,1)), \mu_{\B,i}, T_{\beta_{i-1}}\circ \cdots\circ T_{\beta_{i-p}})$ are exact. It suffices to note that the dynamical system $(\Int \times [0,1),\mathcal{T}_p,\mu_{\B}, T_{\B}^p)$ is not ergodic for $p>1$. Indeed, $T_{\B}^{-p}(\{0\}\times [0,1))=\{0\}\times [0,1)$ whereas $\mu_{\B}(\{0\}\times [0,1))=\frac{1}{p}$.

\subsection{Extended measure}

In order to study the dynamics of the extended greedy $\B$-transformation, we extend the definitions of the measures $\mu_{\B}$ and $\lambda_p$. First, we define a new $\sigma$-algebra $\mathcal{T}_{\B}$ on $\bigcup_{i=0}^{p-1} \big(\{i\}\times[0,x_{\B^{(i)}}) \big)$ as follows:
\[
	\mathcal{T}_{\B}
	=\Bigg\{
	\bigcup_{i=0}^{p-1} 
	(\{i\}\times B_i)\colon 
	\forall i\in\Int,\ 
	B_i\in\mathcal{B}([0, x_{\B^{(i)}}))
	\Bigg\}.
\]
Second, we extend the domain of the measures $\mu_{\B}$ and $\lambda_p$ to $\mathcal{T}_{\B}$ (while keeping the same notation) as follows. 
For $A\in\mathcal{T}_{\B}$, we set
$\mu_{\B}(A)=\mu_{\B}\big(A\cap\big(\Int\times [0,1)\big)\big)$ and $\lambda_p(A)= \lambda_p\big(A\cap\big(\Int\times [0,1)\big)\big)$.

\begin{theorem}
\label{Thm : GlobalTheoremAlternateBaseExtended}
The measure $\mu_{\B}$ is the unique $T_{\B}$-invariant probability measure on $\mathcal{T}_{\B}$ that is absolutely continuous with respect to $\lambda_p$. Furthermore, $\mu_{\B}$ is equivalent to $\lambda_p$ on $\mathcal{T}_{\B}$ and the dynamical system $(\bigcup_{i=0}^{p-1} \big(\{i\}\times[0,x_{\B^{(i)}}) \big), \mathcal{T}_{\B}, \mu_{\B}, T_{\B})$ is ergodic and has entropy $\frac{1}{p}\log(\beta_{p-1}\cdots\beta_0)$.
\end{theorem}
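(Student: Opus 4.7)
The plan is to reduce every claim to the already-established Theorem~\ref{Thm : GlobalTheoremAlternateBase} by exploiting two facts: that $T_{\B}$ maps $Y := \Int \times [0,1)$ into itself (indeed $T_{\B}|_Y$ is the transformation considered in the previous theorem), and that both $\mu_{\B}$ and $\lambda_p$, as extended to $\mathcal{T}_{\B}$, vanish on $X\setminus Y$ where $X := \bigcup_{i=0}^{p-1}(\{i\}\times[0,x_{\B^{(i)}}))$. So everything will be controlled by what happens on $Y$.

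First, for $T_{\B}$-invariance of $\mu_{\B}$ on $\mathcal{T}_{\B}$, I would take $A\in\mathcal{T}_{\B}$ and use $T_{\B}(Y)\subseteq Y$ to write $T_{\B}^{-1}(A)\cap Y=(T_{\B}|_Y)^{-1}(A\cap Y)$; then
\[
\mu_{\B}(T_{\B}^{-1}(A))=\mu_{\B}(T_{\B}^{-1}(A)\cap Y)=\mu_{\B}\bigl((T_{\B}|_Y)^{-1}(A\cap Y)\bigr)=\mu_{\B}(A\cap Y)=\mu_{\B}(A),
\]
where the middle equality is Proposition~\ref{Pro : MuBInvariant}. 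For ergodicity, if $A\in\mathcal{T}_{\B}$ satisfies $T_{\B}^{-1}(A)=A$, intersecting with $Y$ gives $A\cap Y=(T_{\B}|_Y)^{-1}(A\cap Y)$, so $A\cap Y\in\mathcal{T}_p$ is $T_{\B}|_Y$-invariant, hence by Theorem~\ref{Thm : GlobalTheoremAlternateBase} has measure $0$ or $1$, and thus $\mu_{\B}(A)=\mu_{\B}(A\cap Y)\in\{0,1\}$.

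For uniqueness, let $\nu$ be any $T_{\B}$-invariant probability measure on $\mathcal{T}_{\B}$ absolutely continuous with respect to $\lambda_p$. Since $\lambda_p(X\setminus Y)=0$ by construction, absolute continuity gives $\nu(X\setminus Y)=0$, so $\nu$ is determined by its restriction $\nu|_{\mathcal{T}_p}$. This restriction is a probability measure on $\mathcal{T}_p$, absolutely continuous with respect to $\lambda_p|_{\mathcal{T}_p}$, and invariant under $T_{\B}|_Y$ because for $B\in\mathcal{T}_p$,
\[
\nu\bigl((T_{\B}|_Y)^{-1}(B)\bigr)=\nu\bigl(T_{\B}^{-1}(B)\cap Y\bigr)=\nu(T_{\B}^{-1}(B))=\nu(B),
\]
using $\nu(X\setminus Y)=0$ and the $T_{\B}$-invariance of $\nu$. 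Hence $\nu|_{\mathcal{T}_p}=\mu_{\B}|_{\mathcal{T}_p}$ by Theorem~\ref{Thm : GlobalTheoremAlternateBase}, and so $\nu=\mu_{\B}$ on $\mathcal{T}_{\B}$.

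Equivalence of $\mu_{\B}$ and $\lambda_p$ on $\mathcal{T}_{\B}$ follows immediately: both measures vanish on $X\setminus Y$, and they are equivalent on $\mathcal{T}_p$ by Theorem~\ref{Thm : GlobalTheoremAlternateBase}. For the entropy, the inclusion $\iota\colon Y\hookrightarrow X$ is a $\mu_{\B}$-a.e.\ bijective measurable map that intertwines $T_{\B}|_Y$ with $T_{\B}$ and pushes $\mu_{\B}|_{\mathcal{T}_p}$ to $\mu_{\B}$ on $\mathcal{T}_{\B}$, so the two dynamical systems are isomorphic and the entropy is $\frac{1}{p}\log(\beta_{p-1}\cdots\beta_0)$ by Theorem~\ref{Thm : GlobalTheoremAlternateBase}. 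The only subtlety to be careful about throughout is that $T_{\B}^{-1}$ of a subset of $Y$ may contain points of $X\setminus Y$, but those are always of measure zero, so no real difficulty arises.
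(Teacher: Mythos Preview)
Your proof is correct and follows essentially the same approach as the paper: you establish $T_{\B}$-invariance of the extended $\mu_{\B}$ via the same computation (using that $T_{\B}(Y)\subseteq Y$ and that the extended measures vanish off $Y$), and then reduce all remaining claims to Theorem~\ref{Thm : GlobalTheoremAlternateBase} through the inclusion $\iota\colon Y\hookrightarrow X$, which the paper simply phrases as ``the identity map from $\Int\times[0,1)$ to $X$ defines an isomorphism between the two dynamical systems.'' Your write-up is more explicit in treating uniqueness, ergodicity, equivalence, and entropy separately, but the underlying argument is the same.
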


\begin{proof}
Clearly, $\mu_{\B}$ is a probability measure on $\mathcal{T}_{\B}$. For all $A\in\mathcal{T}_{\B}$, we have 
\begin{align*}
	\mu_{\B}(T_{\B}^{-1}(A))
	&=\mu_{\B}\big(T_{\B}^{-1}(A)\cap (\Int\times[0,1))\big)\\
	&=\mu_{\B}\big(T_{\B}^{-1}\big(A\cap (\Int\times[0,1))\big)\cap (\Int\times[0,1))\big)\\
	&=\mu_{\B}\big(T_{\B}^{-1}\big(A\cap (\Int\times[0,1))\big)\big)\\
	&=\mu_{\B}\big(A\cap (\Int\times[0,1))\big)\\
	&=\mu_{\B}(A)
\end{align*}
where we used Proposition~\ref{Pro : MuBInvariant} for the fourth equality.
This shows that $\mu_{\B}$ is $T_{\B}$-invariant on $\mathcal{T}_{\B}$.
The conclusion then follows from the fact that the identity map from $[\![0,p-1]\!]\times [0,1)$ to $\bigcup_{i=0}^{p-1} \big(\{i\}\times[0,x_{\B^{(i)}}) \big)$ defines an isomorphism between the dynamical systems $([\![0,p-1]\!]\times [0,1), \mathcal{F}_{p}, \mu_{\B}, T_{\B})$ and
$(\bigcup_{i=0}^{p-1} \big(\{i\}\times[0,x_{\B^{(i)}}) \big), \mathcal{T}_{\B}, \mu_{\B}, T_{\B})$.
\end{proof}

\subsection{Densities}\label{Section : Densities}

In the next proposition, we express the density of the unique measure given in Theorem~\ref{Thm : mu}.

\begin{proposition}
\label{Pro : Density-Mu-i}
Consider $n\in\N_{\ge 1}$ and $\beta_0,\ldots,\beta_{n-1}>1$. Suppose that
\begin{itemize}
\item $K$ is the number of not onto branches of $T_{\beta_{n-1}}\circ\cdots \circ T_{\beta_{0}}$
\item for $j\in [\![1,K ]\!]$, $c_j$ is the right-hand side endpoint of the domain of the $j$-th not onto branche of $T_{\beta_{n-1}}\circ\cdots \circ T_{\beta_{0}}$
\item $T\colon [0,1)\to [0,1)$ is the transformation defined by $T(x)=T_{\beta_{n-1}}\circ\cdots \circ T_{\beta_{0}}(x)$ for $x\notin\{c_1,\ldots,c_K\}$ and $T(c_j)=\lim_{x \to c_j^{-}}T_{\beta_{n-1}}\circ\cdots \circ T_{\beta_{0}} (x)$ for $j\in [\![1,K]\!]$ 
\item $S$ is the matrix defined by $S=(S_{i,j})_{1\le i,j,\le K}$ where 
\[
	S_{i,j}
	=\sum_{m=1}^\infty 
	\frac{\delta(T^{m}( c_i)>c_j) }{(\beta_{n-1}\cdots\beta_0)^m},
\]
where $\delta(P)$ equals $1$ when the property $P$ is satisfied and $0$ otherwise
\item $1$ is not an eigenvalue of $S$
\item $d_0=1$ and $\begin{pmatrix}
d_1 \cdots d_K
\end{pmatrix}
=\begin{pmatrix}
1 \cdots 1
\end{pmatrix}(-S+ \text{Id}_K)^{-1}$
\item $C=\int_0^1 \Big(d_0 + \sum_{j=1}^{K}d_j\sum_{m=1}^{\infty} \frac{\chi_{[0,T^{m}(c_j)]}}{(\beta_{n-1}\cdots\beta_0)^m} \Big)\, d\lambda$  is the normalization constant.
\end{itemize} 
Then the density of the $(T_{\beta_{n-1}}\circ \cdots \circ T_{\beta_0})$-invariant measure given by Theorem~\ref{Thm : mu} with respect to the Lebesgue measure is  
\begin{equation}
\label{Eq : Densities}
	\frac{1}{C}\Bigg(d_0 
	+ \sum_{j=1}^{K}d_j
	\sum_{m=1}^{\infty} 	
	\frac{\chi_{[0,T^{m}(c_j)]}}{(\beta_{n-1}\cdots\beta_0)^m}
	\Bigg).
\end{equation}
\end{proposition}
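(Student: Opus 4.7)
The plan is to verify directly that the function $h$ defined by the right-hand side of~\eqref{Eq : Densities} is a fixed point of the Perron--Frobenius transfer operator $\mathcal{L}_T$ of $T:=T_{\beta_{n-1}}\circ\cdots\circ T_{\beta_0}$. Once this is established, Theorem~\ref{Thm : mu} together with the normalization $\int h\, d\lambda=1$ (ensured by the constant $C$) identifies $h$ as the unique absolutely continuous $T$-invariant probability density.

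Since $T$ is piecewise linear of constant slope $s:=\beta_{n-1}\cdots\beta_0>1$, the transfer operator reads $(\mathcal{L}_T g)(y)=s^{-1}\sum_{x\in T^{-1}(y)}g(x)$. The first step is to establish the branch-by-branch identity, valid for any $a\in[0,1]$,
\[
\mathcal{L}_T\chi_{[0,a)}=\tfrac{1}{s}\chi_{[0,T(a))}+\tfrac{1}{s}\sum_{i=1}^{K}\delta(c_i<a)\,\chi_{[0,T(c_i^-))}+\tfrac{1}{s}\,N(a)\,\chi_{[0,1)},
\]
where $N(a)$ counts the onto branches whose right endpoint is strictly less than $a$. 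Indeed, the unique branch of $T$ containing $a$ contributes $\chi_{[0,T(a))}$ (since slope $s$ and the left endpoint being mapped to $0$ force the image of $[0,a)\cap B$ to be $[0,T(a))$), each branch lying entirely to the left of $a$ contributes its full image interval (which is $\chi_{[0,1)}$ for an onto branch and $\chi_{[0,T(c_i^-))}$ for the non-onto one ending at $c_i$), and branches lying to the right of $a$ contribute nothing.

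Applying this formula term-by-term to $h$ shows that $\mathcal{L}_T h$ is again a linear combination of $\chi_{[0,1)}$ and of indicators $\chi_{[0,T^r(c_i))}$ for $r\ge 1$ and $i\in[\![1,K]\!]$. For fixed $r\ge 2$ and $i$, the only source of $\chi_{[0,T^r(c_i))}$ on the right-hand side is the shifted term $\chi_{[0,T(T^{r-1}(c_i)))}$ coming from the summand with $j=i$, $m=r-1$, which contributes a coefficient $d_i/s^r$; this already agrees with the coefficient of $\chi_{[0,T^r(c_i))}$ in $h$. For $r=1$, matching the coefficient of $\chi_{[0,T(c_i^-))}$ between $\mathcal{L}_T h$ and $h$ gives, after multiplication by $s$,
\[
d_i=d_0+\sum_{j=1}^{K}d_j\sum_{m=1}^{\infty}\frac{\delta(T^m(c_j)>c_i)}{s^m}=1+\sum_{j=1}^{K}d_j\,S_{j,i},
\]
which is precisely the linear system $(d_1,\dots,d_K)(\mathrm{Id}_K-S)=(1,\dots,1)$; the hypothesis that $1$ is not an eigenvalue of $S$ makes it uniquely solvable and yields the formula for the $d_i$ stated in the proposition. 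Finally, since $\mathcal{L}_T$ preserves integrals, $\int(\mathcal{L}_T h-h)\,d\lambda=0$; having matched all $\chi_{[0,a)}$-coefficients with $a<1$, the remaining discrepancy must be a multiple of $\chi_{[0,1)}$, which then vanishes as well.

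The main obstacle is the combinatorial bookkeeping hidden in the branch-by-branch formula and the need to keep the index shift $m\mapsto m+1$ consistent with the matrix $S$. One must be careful that $T$ at the partition points $c_i$ is interpreted via the left limit, as prescribed in the statement, so that the identity $T^{m+1}(c_j)=T(T^m(c_j))$ remains unambiguous; and that possible coincidences among the endpoints $T^r(c_i)$ and $T^{r'}(c_{i'})$, should they occur, produce only a harmless reindexing of the coefficient equations rather than a genuine obstruction.
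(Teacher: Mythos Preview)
Your approach is correct, but it differs from the paper's in a noteworthy way: the paper's proof is a single sentence citing G\'ora's formula \cite{Gora:2009} for invariant densities of piecewise linear interval maps, whereas you reconstruct that formula from scratch by verifying the Perron--Frobenius fixed-point equation directly. The branch-by-branch identity you state for $\mathcal{L}_T\chi_{[0,a)}$ is exactly the mechanism behind G\'ora's derivation, and your term-matching argument---the $m\mapsto m+1$ shift giving the $r\ge 2$ coefficients for free, the $r=1$ equations recovering the linear system $(d_1,\dots,d_K)(\mathrm{Id}_K-S)=(1,\dots,1)$, and the integral-preservation trick forcing the residual $\chi_{[0,1)}$ coefficient to vanish---is precisely how one checks such a candidate density by hand. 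What the paper's citation buys is brevity; what your route buys is self-containment, so a reader need not consult \cite{Gora:2009} to see why~\eqref{Eq : Densities} holds.

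One small point worth tightening: having shown $\mathcal{L}_T h=h$ with $\int h\,d\lambda=1$, you invoke Theorem~\ref{Thm : mu} to identify $h$ with the density, but that theorem speaks of the unique absolutely continuous invariant \emph{probability} measure, and you have not yet argued that $h\ge 0$. This is easily closed: exactness (Theorem~\ref{Thm : mu}) forces the eigenvalue $1$ of $\mathcal{L}_T$ on $L^1$ to be simple, so any $L^1$ fixed point is a scalar multiple of the true density, and the normalization pins down the scalar. Your acknowledged caveat about coincidences among the orbit points $T^r(c_i)$ is likewise harmless: the coefficient equations are derived term by term rather than indicator by indicator, so collisions merely merge identical equations.
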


\begin{proof}
This is an application of the formula given in~\cite{Gora:2009}. 
\end{proof}

Note that the only hypothesis in the statement of Proposition~\ref{Pro : Density-Mu-i} is that $1$ is not an eigenvalue of the matrix $S$. In \cite{Gora:2009} Gora conjectured that this condition is equivalent to the exactness of the dynamical system, which is a property  we know to be satisfied by Theorem~\ref{Thm : mu}. 

\begin{example}
Consider once again the alternate base $\B=(\frac{1+\sqrt{13}}{2}, \frac{5+\sqrt{13}}{6})$. The composition $T_{\beta_1} \circ T_{\beta_0}$ is depicted in Figure~\ref{Fig : TBsquared13-01}.
\begin{figure}[htb]
\centering
\begin{tikzpicture}
6.71975
\draw[line width=0.0mm ] (0,0) rectangle (4,4); 
\draw[dotted,line width=0.15mm ] (0,1.73703) to (4,1.73703); 
\draw[line width=0.0mm ] (0,0) to (4,4); 
\draw (-0.1,-0.2) node[left]{$0$};
\draw (4,0) node[below]{$1$};
\draw (0,4) node[left]{$1$};
\draw (0,1.73703) node[left]{$\smath{\tfrac{1}{\beta_0}}$};
\draw (1.2111,0) node[below]{$\smath{\tfrac{1}{\beta_1\beta_0}}$};
\draw (1.73703,0) node[below]{$\smath{\tfrac{1}{\beta_0}}$};
\draw ( 2.94814,0) node[below]{$\smath{\tfrac{\beta_1+1}{\beta_1\beta_0}}$};
\draw ( 3.47407,0) node[below]{$\smath{\tfrac{2}{\beta_0}}$};
\draw[line width=0.4mm,blue] (0,0) to (1.2111,4); 
\draw[line width=0.4mm,blue] (1.2111,0) to (1.73703,1.73703); 
\draw[line width=0.4mm,blue] (1.73703,0) to (2.94814,4); 
\draw[line width=0.4mm,blue] (2.94814,0) to (3.47407,1.73703); 
\draw[line width=0.4mm,blue] (3.47407,0) to (4,1.73703); 
\end{tikzpicture}
\caption{The composition $T_{\beta_1}\circ T_{\beta_0}$  with $\B=(\frac{1+\sqrt{13}}{2}, \frac{5+\sqrt{13}}{6})$.}
\label{Fig : TBsquared13-01}
\end{figure}
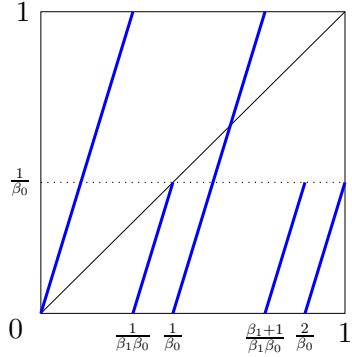 
Since $\frac{1}{\beta_0}=\beta_1-1$, keeping the notation of Proposition~\ref{Pro : Density-Mu-i}, we have $K=3$, $c_1=\frac{1}{\beta_0}$, $c_2=\frac{2}{\beta_0}$ and $c_3=1$. We have $T(c_1)=T(c_2)=T(c_3)=c_1$. Therefore, all elements in $S$ equal $0$, $d_0=d_1=d_2=d_3=1$ and $C=1+ \frac{3}{\beta_0(\beta_1\beta_0-1)}=1+\frac{3}{\beta_0^2}$. The density of the unique absolutely continuous $(T_{\beta_1} \circ T_{\beta_0})$-invariant probability measure is
\[
	\frac1C \Big(1+\frac{3}{\beta_0}\chi_{[0,\frac{1}{\beta_0}]}\Big).
\] 
For example, $\mu\big([0,\frac{1}{\beta_0})\big)
= \frac{13+\sqrt{13}}{26}$. Moreover, it can be checked that $\mu\big((T_{\beta_1} \circ T_{\beta_0})^{-1}[0,\frac{1}{\beta_0})\big)=\mu\big([0,\frac{1}{\beta_0})\big)$.
\end{example}

We obtain a formula for the density $\frac{d\mu_{\B}}{d\lambda_p}$ by using the densities $\frac{d\mu_{\B,i}}{d\lambda}$ for $i\in \Int$ given in Proposition~\ref{Pro : Density-Mu-i}. We first need a lemma.

\begin{lemma}
\label{Lem : Samuel}
For all $i\in\Int$, all sets $B\in\mathcal{B}([0,1))$ and all $\mathcal{B}([0,1))$-measurable functions $f\colon[0,1)\to [0,\infty)$, the map $f\circ\pi_2\colon\Int\times[0,1)\to[0,\infty)$ is $\mathcal{T}_p$-measurable and 
\[
	\int_{\{i\}\times B} f\circ \pi_2\ d\lambda_p
	=\frac{1}{p}\int_{B} f d\lambda.
\]
\end{lemma}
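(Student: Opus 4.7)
The plan is to invoke the standard measure-theoretic machinery: verify measurability directly, establish the integral identity for indicator functions, then extend by linearity and monotone convergence. None of the steps should present real difficulty; the lemma is essentially the statement that $\lambda_p$ restricted to $\{i\} \times [0,1)$ is a rescaled copy of $\lambda$ on $[0,1)$ via $\pi_2$.

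First, for the measurability of $f \circ \pi_2$, I would note that the projection $\pi_2 \colon \Int \times [0,1) \to [0,1)$ is $(\mathcal{T}_p,\mathcal{B}([0,1)))$-measurable, since for any $C \in \mathcal{B}([0,1))$ we have
\[
\pi_2^{-1}(C) = \Int \times C = \bigcup_{j=0}^{p-1} (\{j\} \times C) \in \mathcal{T}_p.
\]
Composition with the measurable function $f$ then yields that $f \circ \pi_2$ is $\mathcal{T}_p$-measurable.

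Next I would verify the integral formula when $f = \chi_C$ is the indicator of some $C \in \mathcal{B}([0,1))$. In that case $f \circ \pi_2 = \chi_{\Int \times C}$, so
\[
\int_{\{i\}\times B} f\circ \pi_2 \, d\lambda_p
= \lambda_p\big((\{i\}\times B)\cap(\Int\times C)\big)
= \lambda_p(\{i\}\times(B\cap C))
= \frac{1}{p}\lambda(B\cap C)
= \frac{1}{p}\int_B f\, d\lambda,
\]
by the definition of $\lambda_p$. By linearity of both sides, the identity extends immediately to nonnegative simple functions $f = \sum_{k=1}^n \alpha_k \chi_{C_k}$.

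Finally, for a general nonnegative $\mathcal{B}([0,1))$-measurable $f$, I would pick an increasing sequence of nonnegative simple functions $f_n \nearrow f$ pointwise. Then $f_n \circ \pi_2 \nearrow f \circ \pi_2$ pointwise on $\Int \times [0,1)$. Two applications of the monotone convergence theorem give
\[
\int_{\{i\}\times B} f\circ \pi_2 \, d\lambda_p
= \lim_{n\to\infty} \int_{\{i\}\times B} f_n\circ \pi_2 \, d\lambda_p
= \lim_{n\to\infty} \frac{1}{p}\int_{B} f_n\, d\lambda
= \frac{1}{p}\int_{B} f\, d\lambda,
\]
which concludes the proof. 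There is no substantive obstacle here; the only thing to keep track of is that $\lambda_p$ is defined piece by piece on the slices $\{i\} \times [0,1)$, so the indicator-function step reduces cleanly to the definition.
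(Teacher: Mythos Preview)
Your proof is correct and is essentially the same approach as the paper's, which simply states that the result ``follows from the definition of the Lebesgue integral via simple functions.'' You have just spelled out the standard indicator--simple--monotone-convergence steps that the paper leaves implicit.
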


\begin{proof}
This follows from the definition of the Lebesgue integral via simple functions.
\end{proof}

\begin{proposition}
\label{Pro : DensityMuB}
The density $\frac{d\mu_{\B}}{d\lambda_p}$ of $\mu_{\B}$ with respect to the $p$-Lebesgue measure on $\mathcal{T}_p$ is
\begin{equation}
\label{Eq : densite}
	\sum_{i=0}^{p-1}
\Bigg(\frac{d\mu_{\B,i}}{d\lambda}\circ \pi_2\Bigg)\cdot \chi_{\{i\} \times [0,1)}.
\end{equation}
\end{proposition}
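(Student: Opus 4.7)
The plan is to verify the claimed formula directly from the definitions of $\mu_{\B}$, $\lambda_p$, and the densities $\frac{d\mu_{\B,i}}{d\lambda}$, using Lemma~\ref{Lem : Samuel} as the bridge between integration against $\lambda$ on $[0,1)$ and integration against $\lambda_p$ on slices $\{i\}\times[0,1)$. Since both sides are measures on $\mathcal{T}_p$ and the generic element of $\mathcal{T}_p$ is $A=\bigcup_{i=0}^{p-1}(\{i\}\times B_i)$ with $B_i\in\mathcal{B}([0,1))$, it suffices to check equality on such sets $A$.

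First, I would compute the left-hand side using the definition of $\mu_{\B}$ and the Radon--Nikodym theorem applied to each slice:
\[
\mu_{\B}(A)=\frac{1}{p}\sum_{i=0}^{p-1}\mu_{\B,i}(B_i)=\frac{1}{p}\sum_{i=0}^{p-1}\int_{B_i}\frac{d\mu_{\B,i}}{d\lambda}\,d\lambda.
\]
Next, I would evaluate the integral of the proposed density against $\lambda_p$ on $A$. Because the sets $\{i\}\times[0,1)$ are pairwise disjoint, the sum collapses slice-by-slice:
\[
\int_A\sum_{i=0}^{p-1}\Bigl(\tfrac{d\mu_{\B,i}}{d\lambda}\circ\pi_2\Bigr)\cdot\chi_{\{i\}\times[0,1)}\,d\lambda_p
=\sum_{i=0}^{p-1}\int_{\{i\}\times B_i}\Bigl(\tfrac{d\mu_{\B,i}}{d\lambda}\circ\pi_2\Bigr)d\lambda_p.
\]
Applying Lemma~\ref{Lem : Samuel} to each term with $f=\frac{d\mu_{\B,i}}{d\lambda}$ (which is $\mathcal{B}([0,1))$-measurable and nonnegative) converts it into $\frac{1}{p}\int_{B_i}\frac{d\mu_{\B,i}}{d\lambda}\,d\lambda$. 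Summing over $i$ gives exactly $\mu_{\B}(A)$.

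I would also briefly remark that the proposed function is $\mathcal{T}_p$-measurable: each summand $\bigl(\frac{d\mu_{\B,i}}{d\lambda}\circ\pi_2\bigr)\cdot\chi_{\{i\}\times[0,1)}$ is measurable by Lemma~\ref{Lem : Samuel}, and the defining $\sigma$-algebra $\mathcal{T}_p$ makes $\chi_{\{i\}\times[0,1)}$ measurable. Since $\mu_{\B}$ is absolutely continuous with respect to $\lambda_p$ on $\mathcal{T}_p$ by Proposition~\ref{Pro : Equiv}, the Radon--Nikodym density is $\lambda_p$-a.e.\ unique, and the above identity on the generating sets $A$ identifies it as the expression in~\eqref{Eq : densite}.

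I do not expect a serious obstacle here: the argument is a routine bookkeeping check combining the definitions of $\mu_{\B}$ and $\lambda_p$ with Lemma~\ref{Lem : Samuel}. The only point requiring a small verification is measurability of the candidate density, but this is immediate from the structure of $\mathcal{T}_p$.
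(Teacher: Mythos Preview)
Your proposal is correct and follows essentially the same approach as the paper: decompose a generic set $A=\bigcup_{i=0}^{p-1}(\{i\}\times B_i)\in\mathcal{T}_p$ into slices, apply Lemma~\ref{Lem : Samuel} on each slice to convert $\int_{\{i\}\times B_i}\bigl(\frac{d\mu_{\B,i}}{d\lambda}\circ\pi_2\bigr)\,d\lambda_p$ into $\frac{1}{p}\int_{B_i}\frac{d\mu_{\B,i}}{d\lambda}\,d\lambda=\frac{1}{p}\mu_{\B,i}(B_i)$, and sum to recover $\mu_{\B}(A)$. Your additional remarks on measurability and $\lambda_p$-a.e.\ uniqueness via Proposition~\ref{Pro : Equiv} are correct but not strictly needed, as the paper leaves them implicit.
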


\begin{proof}
Let $A\in\mathcal{T}_p$ and let $B_0,\ldots,B_{p-1}\in\mathcal{B}([0,1))$ such that $A=\bigcup_{i=0}^{p-1}(\{i\}\times B_i)$. It follows from  Lemma~\ref{Lem : Samuel} that 
\begin{align*}
	\int\limits_{A} \sum_{i=0}^{p-1} \Bigg(\frac{d\mu_{\B,i}}{d\lambda}\circ \pi_2\Bigg)\cdot \chi_{\{i\} \times [0,1)}\ d\lambda_p
	=&\sum_{i=0}^{p-1}
	\int\limits_{\{i\}\times B_i} \frac{d\mu_{\B,i}}{d\lambda}\circ \pi_2\ d\lambda_p\\
	=&\frac{1}{p}\sum_{i=0}^{p-1}
	\int\limits_{B_i} \frac{d\mu_{\B,i}}{d\lambda}\ d\lambda\\
	=&\frac{1}{p}\sum_{i=0}^{p-1}\mu_{\B,i}(B_i)\\
	=&	\mu_{\B}(A).
\end{align*}
\end{proof}

Note that the formula~\eqref{Eq : densite} also holds for the extended measures $\mu_{\B}$ and $\lambda_p$ on $\mathcal{T}_{\B}$.

\subsection{Frequencies}
\label{Section : Frequencies}

We now turn to the frequencies of the digits in the greedy $\B$-expansions of real numbers in the interval $[0,1)$. Recall that the frequency of a digit $d$ occurring in the greedy $\B$-expansion $a_0a_1a_2\cdots$ of a real number $x$ in $[0,1)$ is equal to
\[
	\lim_{n\to \infty} \frac1n 
	\#\{0\le k<n\colon a_k=d\},
\]
provided that this limit converges. 

\begin{proposition}
For $\lambda$-almost all $x\in[0,1)$, the frequency of any digit $d$ occurring in the greedy $\B$-expansion of $x$ exists and is equal to
\[
	\frac{1}{p}\sum_{i=0}^{p-1} 
	\mu_{\B,i} \Big(\big[\tfrac{d}{\beta_i},\tfrac{d+1}{\beta_i} \big)\cap [0,1) \Big).
\]
\end{proposition}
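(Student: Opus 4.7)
The plan is to realize the digit $a_n=d$ event as a hitting condition on $T_{\B}$-orbits and then invoke Birkhoff's Ergodic Theorem for the ergodic system $\big(\Int\times[0,1),\mathcal{T}_p,\mu_{\B},T_{\B}\big)$ provided by Theorem~\ref{Thm : GlobalTheoremAlternateBase}.

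First, recall from Section~\ref{Sec : Greedy} that the digits of the greedy $\B$-expansion of $x\in[0,1)$ are given by $a_n=\lfloor \beta_n\,\pi_2(T_{\B}^n(0,x))\rfloor$. Since $T_{\B}^n(0,x)\in\{n\bmod p\}\times[0,1)$, the condition $a_n=d$ is equivalent to
\[
T_{\B}^n(0,x)\in \{n\bmod p\}\times\Big(\big[\tfrac{d}{\beta_n},\tfrac{d+1}{\beta_n}\big)\cap [0,1)\Big).
\]
Define the set
\[
A_d=\bigcup_{i=0}^{p-1}\Big(\{i\}\times\Big(\big[\tfrac{d}{\beta_i},\tfrac{d+1}{\beta_i}\big)\cap [0,1)\Big)\Big)\in\mathcal{T}_p,
\]
so that $a_n=d$ if and only if $T_{\B}^n(0,x)\in A_d$. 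Consequently, the frequency of $d$ in the greedy $\B$-expansion of $x$ is
\[
\lim_{n\to\infty}\frac{1}{n}\sum_{k=0}^{n-1}\chi_{A_d}\big(T_{\B}^k(0,x)\big).
\]

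Next, I apply Birkhoff's Ergodic Theorem to the ergodic system $(\Int\times[0,1),\mathcal{T}_p,\mu_{\B},T_{\B})$ with integrable observable $\chi_{A_d}$. This gives that for $\mu_{\B}$-a.e.\ $(i,x)\in\Int\times[0,1)$,
\[
\lim_{n\to\infty}\frac{1}{n}\sum_{k=0}^{n-1}\chi_{A_d}\big(T_{\B}^k(i,x)\big)=\mu_{\B}(A_d)=\frac{1}{p}\sum_{i=0}^{p-1}\mu_{\B,i}\Big(\big[\tfrac{d}{\beta_i},\tfrac{d+1}{\beta_i}\big)\cap[0,1)\Big),
\]
the last equality being the definition of $\mu_{\B}$.

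Finally, I need to transfer ``$\mu_{\B}$-a.e.'' to ``$\lambda$-a.e.\ on $[0,1)$ for the initial point $(0,x)$.'' By Theorem~\ref{Thm : GlobalTheoremAlternateBase}, the measure $\mu_{\B}$ is equivalent to $\lambda_p$ on $\mathcal{T}_p$, so the exceptional set $E\in\mathcal{T}_p$ on which convergence fails satisfies $\lambda_p(E)=0$. By the very definition of $\lambda_p$, this forces $\lambda\big(\{x\in[0,1)\colon (0,x)\in E\}\big)=0$, which is exactly the claim. This last transfer step is the only subtle point; everything else is a direct application of already established results.
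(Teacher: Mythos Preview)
Your proof is correct and follows essentially the same approach as the paper: both recognize that $a_n=d$ is equivalent to $T_{\B}^n(0,x)$ landing in the set $A_d=\bigcup_{i=0}^{p-1}\big(\{i\}\times([\tfrac{d}{\beta_i},\tfrac{d+1}{\beta_i})\cap[0,1))\big)$, apply the Ergodic Theorem for the system $(\Int\times[0,1),\mathcal{T}_p,\mu_{\B},T_{\B})$, and then pass from $\mu_{\B}$-a.e.\ to $\lambda$-a.e.\ via the equivalence $\mu_{\B}\sim\lambda_p$. The only cosmetic difference is that the paper writes $\chi_{A_d}$ as a sum of indicators over $i$ and applies the Ergodic Theorem termwise, whereas you apply it directly to $\chi_{A_d}$; your transfer step to the slice $\{0\}\times[0,1)$ is also stated more explicitly than in the paper.
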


\begin{proof} 
Let $x\in[0,1)$ and let $d$ be a digit occurring in $d_{\B}(x)=a_0a_1a_2\cdots$. Then for all $k\in\N$, $a_k=d$ if and only if $\pi_2(T_{\B}^k(0,x))\in [\frac{d}{\beta_k},\frac{d+1}{\beta_k})\cap [0,1)$. Moreover, since for all $k\in\N$, $T_{\B}^k(0,x)\in\{k\bmod p\}\times [0,1)$, we have 
\begin{align*}
	\chi_{[\frac{d}{\beta_k},\frac{d+1}{\beta_k})\cap [0,1)} 
	\big(\pi_2\big(T_{\B}^k(0,x)\big)\big)
	&=\chi_{\{k\bmod p\}\times
	\big([\frac{d}{\beta_k},\frac{d+1}{\beta_k})\cap [0,1)\big)} 
	\big(T_{\B}^k(0,x)\big)\\
	&=\sum_{i=0}^{p-1}
	\chi_{\{i\}\times
	\big([\frac{d}{\beta_i},\frac{d+1}{\beta_i})\cap [0,1)\big)} 
	\big(T_{\B}^k(0,x)\big).
\end{align*}
Therefore, if it exists, the frequency of $d$ in $d_{\B}(x)$ is equal to
\[
	\lim_{n\to\infty}
	\frac1n \sum_{k=0}^{n-1} 
	\sum_{i=0}^{p-1}
	\chi_{\{i\}\times
	\big([\frac{d}{\beta_i},\frac{d+1}{\beta_i})\cap [0,1)\big)} 
	\big(T_{\B}^k(0,x)\big).
\]
Yet, for each $i\in\Int$ and for $\mu_{\B}$-almost all $y\in\Int\times[0,1)$, we have
\begin{align*}
	\lim_{n\to \infty}  
		\frac1n
		\sum_{k=0}^{n-1} 
	\chi_{\{i\}\times
	\big([\frac{d}{\beta_i},\frac{d+1}{\beta_i})\cap [0,1)\big)} 
	\big(T_{\B}^k(y)\big)
	&=	\int_{\Int\times[0,1)} 
	\chi_{\{i\}\times
	\big([\frac{d}{\beta_i},\frac{d+1}{\beta_i})\cap [0,1)\big)} 
	d\mu_{\B}\\
	&=\mu_{\B} \Big(\{i\}\times  \big(\big[\tfrac{d}{\beta_i},\tfrac{d+1}{\beta_i} \big)\cap [0,1)\big) \Big)\\
	&=\frac{1}{p}\mu_{\B,i} \Big(\big[\tfrac{d}{\beta_i},\tfrac{d+1}{\beta_i} \big)\cap [0,1)\Big)
\end{align*}
where we used Theorem~\ref{Thm : GlobalTheoremAlternateBase} and the Ergodic Theorem for the first equality. The conclusion now follows from Proposition~\ref{Pro : Equiv}.
\end{proof}

\section{Isomorphism between greedy and lazy $\B$-transformations}\label{Section : IsoGreedyLazy}

In this section, we show that 
\begin{equation}
\label{Eq : IsoGreedyLazy}
	\phi_{\B}\colon 
	\bigcup_{i=0}^{p-1} \big(\{i\} \times [0,x_{\B^{(i)}}) \big)	
	\to 	\bigcup_{i=0}^{p-1} \big(\{i\} \times (0,x_{\B^{(i)}}] \big),\ 
	(i,x)\mapsto \big(i, x_{\B^{(i)}}-x\big)
\end{equation}
defines an isomorphism between the greedy $\B$-transformation and the lazy $\B$-transfor\-mation. 

We consider the $\sigma$-algebra 
\[
	\mathcal{L}_{\B}
	=\Bigg\{
	\bigcup_{i=0}^{p-1} 
	(\{i\}\times B_i)\colon 
	\forall i\in\Int,\ 
	B_i\in\mathcal{B}\big((0, x_{\B^{(i)}}] \big)
	\Bigg\}
\]
on $\bigcup_{i=0}^{p-1} \big(\{i\} \times \big(0,x_{\B^{(i)}}\big] \big)$. 

\begin{theorem}
\label{Thm : IsomorphismLazy}
The map $\phi_{\B}$ is an isomorphism between the dynamical systems 
$\big(\bigcup_{i=0}^{p-1} 
	\big(\{i\} \times [0, x_{\B^{(i)}}) \big),
	\mathcal{T}_{\B}, \mu_{\B}, T_{\B}\big)$
and $\big(\bigcup_{i=0}^{p-1} 
	\big(\{i\} \times (0, x_{\B^{(i)}}] \big),
	\mathcal{L}_{\B},\mu_{\B} \circ \phi_{\B}^{-1},L_{\B}\big)$.
\end{theorem}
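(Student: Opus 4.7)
The plan is to check the three requirements of the definition of a measure preserving isomorphism recorded in Section~\ref{Section : Preliminaries}: (i) $\phi_{\B}$ is a measurable bijection, (ii) the pushforward identity $\mu_{\B}\circ\phi_{\B}^{-1}=\mu_{\B}\circ\phi_{\B}^{-1}$ is the chosen measure on the lazy side (which is automatic), and (iii) the conjugation $\phi_{\B}\circ T_{\B}=L_{\B}\circ\phi_{\B}$ holds pointwise on the whole domain. From (iii), invariance of $\mu_{\B}\circ\phi_{\B}^{-1}$ under $L_{\B}$ is a one-line formal computation using the $T_{\B}$-invariance of $\mu_{\B}$ obtained in Theorem~\ref{Thm : GlobalTheoremAlternateBaseExtended}. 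The map $\phi_{\B}$ acts slicewise as the affine reflection $x\mapsto x_{\B^{(i)}}-x$, hence is a homeomorphism (so Borel measurable and a bijection) between $\{i\}\times[0,x_{\B^{(i)}})$ and $\{i\}\times(0,x_{\B^{(i)}}]$, and $\phi_{\B}^{-1}$ sends $\mathcal{L}_{\B}$ onto $\mathcal{T}_{\B}$; thus (i) and (ii) are immediate.

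The substance of the proof is (iii), which is obtained by case analysis on the two branches of the definitions of $T_{\B}$ and $L_{\B}$, using the relation
\[
\beta_i x_{\B^{(i)}}=x_{\B^{(i+1)}}+\ceil{\beta_i}-1
\]
from \eqref{Eq : EqualitiesXB}. Fix $(i,x)$ in the domain. If $x\in[1,x_{\B^{(i)}})$, then $x_{\B^{(i)}}-x\in(0,x_{\B^{(i)}}-1]$, so $L_{\B}$ is applied via its first branch:
\[
L_{\B}(\phi_{\B}(i,x))=\bigl((i{+}1)\bmod p,\ \beta_i x_{\B^{(i)}}-\beta_i x\bigr)=\bigl((i{+}1)\bmod p,\ x_{\B^{(i+1)}}+\ceil{\beta_i}-1-\beta_i x\bigr),
\]
which equals $\phi_{\B}(T_{\B}(i,x))$ since $T_{\B}$ acts by its second branch on $[1,x_{\B^{(i)}})$. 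If instead $x\in[0,1)$, then $x_{\B^{(i)}}-x\in(x_{\B^{(i)}}-1,x_{\B^{(i)}}]$, so $L_{\B}$ is applied via its second branch; writing $\beta_i(x_{\B^{(i)}}-x)-x_{\B^{(i+1)}}=\ceil{\beta_i}-1-\beta_i x$ and using the elementary identity $\ceil{c-y}=c-\floor{y}$ valid whenever $c\in\Z$ and $y\ge 0$ (applied with $c=\ceil{\beta_i}-1$ and $y=\beta_i x\ge 0$), one computes
\[
L_{\B}(\phi_{\B}(i,x))=\bigl((i{+}1)\bmod p,\ x_{\B^{(i+1)}}+\ceil{\beta_i}-1-\beta_i x-(\ceil{\beta_i}-1-\floor{\beta_i x})\bigr),
\]
which simplifies to $\phi_{\B}(T_{\B}(i,x))$.

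With the conjugation in hand, for any $A\in\mathcal{L}_{\B}$ one has
\[
(\mu_{\B}\circ\phi_{\B}^{-1})(L_{\B}^{-1}(A))=\mu_{\B}\bigl((L_{\B}\circ\phi_{\B})^{-1}(A)\bigr)=\mu_{\B}\bigl((\phi_{\B}\circ T_{\B})^{-1}(A)\bigr)=\mu_{\B}(\phi_{\B}^{-1}(A))
\]
by the $T_{\B}$-invariance of $\mu_{\B}$, so $\mu_{\B}\circ\phi_{\B}^{-1}$ is $L_{\B}$-invariant and the quadruple on the right is genuinely a dynamical system. Combined with (i)--(ii), this proves that $\phi_{\B}$ is an isomorphism. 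The only delicate point in the argument is bookkeeping in the case distinction for (iii), in particular keeping track of how the endpoints $x=1$ and $x_{\B^{(i)}}-1$ are handled and of the integer-valued cancellation $\ceil{c-y}=c-\floor{y}$; once this is settled, the rest is formal.
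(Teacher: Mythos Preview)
Your proof is correct and follows essentially the same approach as the paper: both establish the conjugation $\phi_{\B}\circ T_{\B}=L_{\B}\circ\phi_{\B}$ by a two-case analysis on whether $x\in[0,1)$ or $x\in[1,x_{\B^{(i)}})$, invoking the relation~\eqref{Eq : EqualitiesXB} to match the two sides. Your write-up is slightly more explicit (you spell out the floor/ceiling identity and the formal verification that the pushforward measure is $L_{\B}$-invariant), but the substance is identical.
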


\begin{proof}
Clearly, $\phi_{\B}$ is a bijective map. Hence, we only have to show that $\phi_{\B}\circ T_{\B}=L_{\B}\circ \phi_{\B}$. Let $(i,x)\in \bigcup_{i=0}^{p-1} \big(\{i\} \times [0,x_{\B^{(i)}}) \big)$. First, suppose that $x\in[0,1)$. Then
\[
	\phi_{\B}\circ T_{\B}(i,x)
	=\big((i+1)\bmod p,
	x_{\B^{(i+1)}}-\beta_ix+\floor{\beta_i x}
	\big)
\]
and
\[
	L_{\B}\circ \phi_{\B}(i,x)
	=\big((i+1)\bmod p,
	\beta_i(x_{\B^{(i)}}-x)
	-\ceil{\beta_i(x_{\B^{(i)}}-x)-x_{\B^{(i+1)}}}		\big).
\]
Second, suppose that $x\in[1,x_{\B^{(i)}})$. Then
\[
	\phi_{\B}\circ T_{\B}(i,x)
	=\big((i+1)\bmod p,
	x_{\B^{(i+1)}}-\beta_ix+\floor{\beta_i}-1
	\big)
\]
and
\[
	L_{\B}\circ \phi_{\B}(i,x)
	=\big((i+1)\bmod p,
	\beta_i(x_{\B^{(i)}}-x)
	\big).
\]
In both cases, we easily get that 
$\phi_{\B}\circ T_{\B}(i,x)=	L_{\B}\circ \phi_{\B}(i,x)
$ by using~\eqref{Eq : EqualitiesXB}. 
\end{proof}

Thanks to Theorem~\ref{Thm : IsomorphismLazy}, we obtain an analogue of Theorem~\ref{Thm : GlobalTheoremAlternateBaseExtended} for the lazy $\B$-transfor\-mation. 

\begin{theorem}
The measure $\mu_{\B} \circ \phi_{\B}^{-1}$ is the unique $L_{\B}$-invariant probability measure on $\mathcal{L}_{\B}$ that is absolutely continuous with respect to $\lambda_p\circ\phi_{\B}^{-1}$. Furthermore, $\mu_{\B} \circ \phi_{\B}^{-1}$ is equivalent to $\lambda_p\circ\phi_{\B}^{-1}$ on $\mathcal{L}_{\B}$ and the dynamical system $\big(\bigcup_{i=0}^{p-1} 
	\big(\{i\} \times (0,x_{\B^{(i)}}] \big),
	\mathcal{L}_{\B}, \mu_{\B} \circ \phi_{\B}^{-1}, L_{\B}\big)$ is ergodic and has entropy $\frac{1}{p}\log(\beta_{p-1}\cdots\beta_0)$.
\end{theorem}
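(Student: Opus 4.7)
The plan is to transfer every assertion in the statement from the greedy side to the lazy side via the isomorphism $\phi_{\B}$ of Theorem~\ref{Thm : IsomorphismLazy}, using Theorem~\ref{Thm : GlobalTheoremAlternateBaseExtended} as the source of the greedy-side facts. Since $\phi_{\B}$ is a bijection whose restriction to each slice $\{i\}\times[0,x_{\B^{(i)}})$ is the affine involution $x\mapsto x_{\B^{(i)}}-x$, it is bimeasurable between $\mathcal{T}_{\B}$ and $\mathcal{L}_{\B}$, so every pushforward and pullback under $\phi_{\B}$ is well defined.

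First I would check that $\mu_{\B}\circ\phi_{\B}^{-1}$ is $L_{\B}$-invariant. The conjugation $\phi_{\B}\circ T_{\B}=L_{\B}\circ\phi_{\B}$ from Theorem~\ref{Thm : IsomorphismLazy} yields $\phi_{\B}^{-1}\circ L_{\B}^{-1}=T_{\B}^{-1}\circ\phi_{\B}^{-1}$, so $T_{\B}$-invariance of $\mu_{\B}$ on $\mathcal{T}_{\B}$ (Theorem~\ref{Thm : GlobalTheoremAlternateBaseExtended}) gives the claim. Absolute continuity of $\mu_{\B}\circ\phi_{\B}^{-1}$ with respect to $\lambda_p\circ\phi_{\B}^{-1}$, as well as their equivalence, follow at once from the corresponding properties of $\mu_{\B}$ and $\lambda_p$ on $\mathcal{T}_{\B}$ by applying the bijection $\phi_{\B}^{-1}$ to the sets involved.

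For uniqueness, I would argue that if $\nu$ is any $L_{\B}$-invariant probability measure on $\mathcal{L}_{\B}$ that is absolutely continuous with respect to $\lambda_p\circ\phi_{\B}^{-1}$, then $\nu\circ\phi_{\B}$ is a $T_{\B}$-invariant probability measure on $\mathcal{T}_{\B}$ absolutely continuous with respect to $\lambda_p$. The uniqueness part of Theorem~\ref{Thm : GlobalTheoremAlternateBaseExtended} then forces $\nu\circ\phi_{\B}=\mu_{\B}$, hence $\nu=\mu_{\B}\circ\phi_{\B}^{-1}$. Ergodicity and the value of the entropy are isomorphism invariants of measure preserving systems, so they carry over directly via $\phi_{\B}$.

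Because Theorem~\ref{Thm : IsomorphismLazy} already does the dynamical work, there is no significant obstacle left here; the proof reduces to unwinding definitions and invoking Theorem~\ref{Thm : GlobalTheoremAlternateBaseExtended}. The only point worth mentioning explicitly is the bimeasurability of $\phi_{\B}$, which is immediate because each slice map $x\mapsto x_{\B^{(i)}}-x$ is a Borel isomorphism from $[0,x_{\B^{(i)}})$ onto $(0,x_{\B^{(i)}}]$.
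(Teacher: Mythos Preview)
Your proposal is correct and follows exactly the approach the paper takes: the paper does not write out a proof for this theorem but simply states that it is obtained ``thanks to Theorem~\ref{Thm : IsomorphismLazy}'' as an analogue of Theorem~\ref{Thm : GlobalTheoremAlternateBaseExtended}. Your argument spells out precisely the transfer-by-isomorphism that the paper leaves implicit.
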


Similarly, we have an analogue of Theorem~\ref{Thm : GlobalTheoremAlternateBase} for the lazy $\B$-transformation, by considering the $\sigma$-algebra
\[
	\mathcal{L}_{\B}'
	=\Bigg\{
	\bigcup_{i=0}^{p-1} 
	(\{i\}\times B_i)\colon 
	\forall i\in\Int,\ 
	B_i\in\mathcal{B}\big((x_{\B^{(i)}}-1, x_{\B^{(i)}}] \big)
	\Bigg\}.
\]

\begin{theorem}
The measure $\mu_{\B} \circ \phi_{\B}^{-1}$ is the unique $L_{\B}$-invariant probability measure on $\mathcal{L}_{\B}'$ that is absolutely continuous with respect to $\lambda_p\circ\phi_{\B}^{-1}$. Furthermore, $\mu_{\B} \circ \phi_{\B}^{-1}$ is equivalent to $\lambda_p\circ\phi_{\B}^{-1}$ on $\mathcal{L}_{\B}'$ and the dynamical system $\big(\bigcup_{i=0}^{p-1} 
	\big(\{i\} \times (x_{\B^{(i)}}-1,x_{\B^{(i)}}] \big),
	\mathcal{L}_{\B}', \mu_{\B} \circ \phi_{\B}^{-1}, L_{\B}\big)$ is ergodic and has entropy $\frac{1}{p}\log(\beta_{p-1}\cdots\beta_0)$.
\end{theorem}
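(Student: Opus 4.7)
The plan is to mirror the strategy used to obtain the two preceding theorems in Section~\ref{Section : IsoGreedyLazy} and derive this restricted lazy statement from the greedy result of Theorem~\ref{Thm : GlobalTheoremAlternateBase} (which is the restricted, not extended, greedy theorem) via the conjugacy $\phi_{\B}$. The key observation is that $\phi_{\B}$ restricted to $\Int \times [0,1)$ is a bijection onto $\bigcup_{i=0}^{p-1} \big(\{i\} \times (x_{\B^{(i)}}-1,x_{\B^{(i)}}] \big)$, since for each $i$ the map $x\mapsto x_{\B^{(i)}}-x$ sends $[0,1)$ bijectively onto $(x_{\B^{(i)}}-1,x_{\B^{(i)}}]$.

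Second, I would verify that under this restriction $\phi_{\B}$ still intertwines the two transformations. On one side, $T_{\B}$ preserves $\Int \times [0,1)$ by its very definition~\eqref{Eq : TB}; on the other side, the text already notes that $L_{\B}$ sends $\{i\}\times (x_{\B^{(i)}}-1,x_{\B^{(i)}}]$ into $\{(i+1)\bmod p\}\times (x_{\B^{(i+1)}}-1,x_{\B^{(i+1)}}]$. The identity $\phi_{\B}\circ T_{\B}=L_{\B}\circ \phi_{\B}$ was already established in the proof of Theorem~\ref{Thm : IsomorphismLazy}, so it holds a fortiori on the restricted domains. It follows that $\phi_{\B}$ is an (everywhere defined, bimeasurable) isomorphism between the dynamical systems
\[
\big(\Int\times [0,1),\mathcal{T}_p,\mu_{\B},T_{\B}\big)
\quad\text{and}\quad
\Big(\bigcup_{i=0}^{p-1}\big(\{i\}\times (x_{\B^{(i)}}-1,x_{\B^{(i)}}]\big),\mathcal{L}_{\B}',\mu_{\B}\circ\phi_{\B}^{-1},L_{\B}\Big).
\]

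Finally, I would transport each property of interest across the isomorphism. Ergodicity, entropy, and $L_{\B}$-invariance of $\mu_{\B}\circ\phi_{\B}^{-1}$ follow mechanically from the corresponding properties of $(T_{\B},\mu_{\B})$ given by Theorem~\ref{Thm : GlobalTheoremAlternateBase}, together with the fact that entropy is an isomorphism invariant. The equivalence $\mu_{\B}\circ\phi_{\B}^{-1}\sim \lambda_p\circ\phi_{\B}^{-1}$ on $\mathcal{L}_{\B}'$ is immediate from the equivalence $\mu_{\B}\sim \lambda_p$ on $\mathcal{T}_p$ by applying $\phi_{\B}^{-1}$. For uniqueness, I would observe that if $\nu$ is another $L_{\B}$-invariant probability measure on $\mathcal{L}_{\B}'$ absolutely continuous with respect to $\lambda_p\circ\phi_{\B}^{-1}$, then $\nu\circ\phi_{\B}$ is a $T_{\B}$-invariant probability measure on $\mathcal{T}_p$ absolutely continuous with respect to $\lambda_p$, hence equal to $\mu_{\B}$ by Theorem~\ref{Thm : GlobalTheoremAlternateBase}, so $\nu=\mu_{\B}\circ\phi_{\B}^{-1}$.

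There is no real obstacle here; the only point that requires some care is checking that the restriction of $\phi_{\B}$ to the greedy domain $\Int\times [0,1)$ lands exactly in the lazy domain $\bigcup_{i=0}^{p-1} \big(\{i\}\times (x_{\B^{(i)}}-1,x_{\B^{(i)}}] \big)$, and that this restricted map remains a measurable bijection with measurable inverse — both of which are immediate from the explicit formula $\phi_{\B}(i,x)=(i,x_{\B^{(i)}}-x)$.
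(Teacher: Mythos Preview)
Your proposal is correct and is precisely the argument the paper has in mind: the paper states this theorem without proof, merely noting that ``Similarly, we have an analogue of Theorem~\ref{Thm : GlobalTheoremAlternateBase} for the lazy $\B$-transformation,'' and the restricted conjugacy you describe (which the paper later writes out explicitly in the Remark of Section~\ref{Section : Shift}) is exactly the mechanism implied.
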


\begin{remark}
\label{Rem : chiffres-lazy}
We deduce from Theorem~\ref{Thm : IsomorphismLazy} that if the greedy $\B$-expansion of a real number $x\in [0,x_{\B})$ is $a_0a_1a_2\cdots$, then the lazy $\B$-expansion of $x_{\B}-x$ is $(\ceil{\beta_0}-1-a_0)(\ceil{\beta_1}-1-a_1)(\ceil{\beta_2}-1-a_2)\cdots$. 
\end{remark}

\section{Isomorphism with the $\B$-shift}
\label{Section : Shift}

The aim of this section is to generalize the isomorphism between the greedy $\beta$-transfor\-mation and the $\beta$-shift to the framework of alternate bases. We start by providing some background of the real base case.

Let $D_\beta$ denote the set of all greedy $\beta$-expansions of real numbers in the interval $[0,1)$. The \emph{$\beta$-shift} is the set $S_\beta$ defined as the topological closure of $D_\beta$ with respect to the prefix distance of infinite words. For an alphabet $A$, we let $\mathcal{C}_{A}$ denote the $\sigma$-algebra generated by the \emph{cylinders}
\[
	C_A(a_0,\ldots,a_{\ell-1})=\{w \in A^{\N} \colon w[0]=a_0,\ldots,w[\ell-1]=a_{\ell-1}\}
\]
for all $\ell\in \N$ and $a_0,\ldots,a_{\ell-1}\in A$, where the notation $w[k]$ designates the letter at position $k$ in the infinite word $w$, and we call 
\[
	\sigma_A \colon A^{\N} \to A^{\N},\ 
	a_0a_1a_2\cdots\mapsto a_1a_2a_3\cdots
\] 
the \emph{shift operator} over $A$. If no confusion is possible, we simply write $\sigma$ instead of $\sigma_A$. Then the map $\psi_{\beta}\colon [0,1) \to S_\beta,\ x \mapsto d_\beta(x)$ defines an isomorphism between the dynamical systems $( [0,1),\mathcal{B}([0,1)),\mu_\beta,T_\beta)$ and $( S_\beta, \{C\cap S_\beta\colon C\in \mathcal{C}_{A_\beta}\}, \mu_{\beta}\circ \psi_{\beta}^{-1},\sigma_{|S_\beta})$ where $A_\beta$ denote the alphabet of digits $[\![0,\ceil{\beta}-1]\!]$.

Now, let us extend the previous notation to the framework of alternate bases. Let $A_{\B}$ denote the alphabet $[\![0,\max\limits_{i\in\Int}\ceil{\beta_i}-1]\!]$, let $D_{\B}$ denote the subset of $A_{\B}^\N$ made of all greedy $\B$-expansions of real numbers in $[0,1)$ and let $S_{\B}$ denote the topological closure of $D_{\B}$ with respect to the prefix distance of infinite words: 
\[
	D_{\B}=\{d_{\B}(x) \colon x \in [0,1)\} 
	\quad \text{and} \quad 
	S_{\B}=\overline{D_{\B}}.
\]
The following lemma was proved in~\cite{Charlier&Cisternino:2020}.

\begin{lemma}
\label{Lem : SbetaShift}
For all $n\in\N$, if $w\in S_{\B^{(n)}}$ then $\sigma(w)\in S_{\B^{(n+1)}}$.
\end{lemma}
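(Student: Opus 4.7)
The plan is to handle the dense subset $D_{\B^{(n)}}$ first and then extend by continuity of the shift.

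First, I would show the inclusion $\sigma(D_{\B^{(n)}})\subseteq D_{\B^{(n+1)}}$. Let $w\in D_{\B^{(n)}}$, so $w=d_{\B^{(n)}}(x)$ for some $x\in[0,1)$. Using the definition given in Section~\ref{Sec : Greedy}, the letter $w[k]$ equals $\lfloor\beta_{n+k}\bigl(T_{\beta_{n+k-1}}\circ\cdots\circ T_{\beta_n}\bigr)(x)\rfloor$ (with the convention that an empty composition is the identity). Setting $y=T_{\beta_n}(x)\in[0,1)$ and reindexing $k\mapsto k+1$, the letter $\sigma(w)[k]=w[k+1]$ equals $\lfloor\beta_{n+k+1}\bigl(T_{\beta_{n+k}}\circ\cdots\circ T_{\beta_{n+1}}\bigr)(y)\rfloor$, which is precisely the $k$-th digit of $d_{\B^{(n+1)}}(y)$. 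Hence $\sigma(w)=d_{\B^{(n+1)}}(y)\in D_{\B^{(n+1)}}$.

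Next, I would pass to the topological closure. The shift operator $\sigma$ on $A_{\B}^{\N}$ is continuous with respect to the prefix distance: if two infinite words agree on their first $\ell+1$ letters, their shifts agree on their first $\ell$ letters. If $w\in S_{\B^{(n)}}=\overline{D_{\B^{(n)}}}$, pick a sequence $(w^{(m)})_{m\in\N}$ in $D_{\B^{(n)}}$ converging to $w$ in prefix distance. By the previous paragraph, $\sigma(w^{(m)})\in D_{\B^{(n+1)}}$ for every $m$, and by continuity $\sigma(w^{(m)})\to\sigma(w)$. Therefore $\sigma(w)\in\overline{D_{\B^{(n+1)}}}=S_{\B^{(n+1)}}$, which is the required conclusion.

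I do not foresee any serious obstacle: the only ingredient besides the definitions is the observation that the greedy transformation commutes with passing to the shifted base, and this follows directly from the fact that $T_{\B^{(n)}}$ on the second coordinate is obtained by alternating the maps $T_{\beta_n}, T_{\beta_{n+1}},\ldots$ in exactly the order that $T_{\B^{(n+1)}}$ uses one step later. The only care needed is to invoke continuity of $\sigma$ in the closure step, which is immediate from the definition of the prefix distance.
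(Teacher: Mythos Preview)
Your argument is correct: reducing first to $D_{\B^{(n)}}$ via the identity $\sigma(d_{\B^{(n)}}(x))=d_{\B^{(n+1)}}(T_{\beta_n}(x))$ and then passing to the closure by continuity of $\sigma$ is exactly the natural route, and every step goes through as written.

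Note however that the paper does not give its own proof of this lemma; it simply cites it from~\cite{Charlier&Cisternino:2020}. So rather than matching or differing from the paper's argument, your proposal supplies a self-contained proof where the paper only provides a reference.
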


Consider the $\sigma$-algebra 
\[
	\mathcal{G}_{\B}
	=\Bigg\{\bigcup_{i=0}^{p-1}\big(\{i\}\times (C_i\cap S_{\B^{(i)}})\big)
	\colon C_i\in \mathcal{C}_{A_{\B}}\Bigg\}
\] 
on $\bigcup_{i=0}^{p-1} (\{i\}\times S_{\B^{(i)}})$. We define 
\begin{align*}
	&\sigma_p\colon 
	\bigcup_{i=0}^{p-1} (\{i\}\times S_{\B^{(i)}}) \to 	\bigcup_{i=0}^{p-1} (\{i\}\times S_{\B^{(i)}}),\ 
	(i,w)\mapsto ((i+1)\bmod p,\sigma(w))\\
	&\psi_{\B}\colon 
	\Int\times[0,1)\to 	\bigcup_{i=0}^{p-1} (\{i\}\times S_{\B^{(i)}})	,\ 
	(i,x)\mapsto (i,d_{\B^{(i)}}(x)).
\end{align*} 
Note that the transformation $\sigma_p$ is well defined by Lemma~\ref{Lem : SbetaShift}. 

\begin{theorem}
\label{Thm : AlternateBaseIsomorphism}
The map $\psi_{\B}$ defines an isomorphism between the dynamical systems 
\[
	\big(\Int \times [0,1),\mathcal{T}_p,\mu_{\B},T_{\B}\big)
	\quad\text{and}\quad
	\Bigg(\bigcup_{i=0}^{p-1} (\{i\}\times S_{\B^{(i)}}),\mathcal{G}_{\B},\mu_{\B}\circ \psi_{\B}^{-1},\sigma_p\Bigg).
\]
\end{theorem}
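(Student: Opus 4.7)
The plan is to verify directly the four defining conditions of a measure preserving isomorphism for the map $\psi_{\B}$, namely: (i) well-definedness with values in $\bigcup_{i=0}^{p-1}(\{i\}\times S_{\B^{(i)}})$, (ii) $\mathcal{T}_p$-$\mathcal{G}_{\B}$ measurability, (iii) $\mu_{\B}$-a.e.\ injectivity, and (iv) the intertwining relation $\psi_{\B}\circ T_{\B}=\sigma_p\circ \psi_{\B}$. The measure-preservation condition $\mu_{\B}\circ\psi_{\B}^{-1}=\mu_{\B}\circ\psi_{\B}^{-1}$ holds by construction, since the measure on the target has been defined as the pushforward.

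For (i), I would simply observe that for each $i\in\Int$ and $x\in[0,1)$ one has $d_{\B^{(i)}}(x)\in D_{\B^{(i)}}\subseteq S_{\B^{(i)}}$, and that the alphabet $A_{\B}$ has been chosen so that $D_{\B^{(i)}}\subseteq A_{\B}^\N$ for every $i\in\Int$. For (iii), I would prove the stronger fact that $\psi_{\B}$ is injective everywhere on $\Int\times[0,1)$: if $\psi_{\B}(i,x)=\psi_{\B}(j,y)$, then $i=j$, and the common greedy expansion $d_{\B^{(i)}}(x)=d_{\B^{(i)}}(y)=a_0a_1a_2\cdots$ determines both numbers via the convergent series of~\eqref{Eq : valueAlternatBase} written in the base $\B^{(i)}$, forcing $x=y$.

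For measurability (ii), I would exploit the fact that $\mathcal{G}_{\B}$ is generated by the sets of the form $\{i\}\times(C_{A_{\B}}(a_0,\ldots,a_{\ell-1})\cap S_{\B^{(i)}})$. The preimage of such a set under $\psi_{\B}$ equals $\{i\}\times J_i(a_0,\ldots,a_{\ell-1})$, where $J_i(a_0,\ldots,a_{\ell-1})$ denotes the set of $x\in[0,1)$ whose greedy $\B^{(i)}$-expansion starts with the word $a_0\cdots a_{\ell-1}$. Using the recursive characterization $a_n=\floor{\beta_{i+n}\,(T_{\beta_{i+n-1}}\circ\cdots\circ T_{\beta_i})(x)}$ given in Section~\ref{Sec : Greedy}, a straightforward induction on $\ell$ shows that $J_i(a_0,\ldots,a_{\ell-1})$ is either empty or an interval contained in $[0,1)$, hence Borel; this gives membership in $\mathcal{T}_p$.

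For the intertwining (iv), unfolding the definitions yields
\[
	\psi_{\B}\circ T_{\B}(i,x)=\big((i{+}1)\bmod p,\ d_{\B^{(i+1)}}(T_{\beta_i}(x))\big)
\quad\text{and}\quad
	\sigma_p\circ \psi_{\B}(i,x)=\big((i{+}1)\bmod p,\ \sigma(d_{\B^{(i)}}(x))\big),
\]
so it suffices to establish $d_{\B^{(i+1)}}(T_{\beta_i}(x))=\sigma(d_{\B^{(i)}}(x))$. If $d_{\B^{(i)}}(x)=a_0a_1a_2\cdots$, then $a_0=\floor{\beta_ix}$ and therefore $T_{\beta_i}(x)\in[0,1)$; by the recursive definition of the greedy algorithm, the remaining digits $a_1a_2a_3\cdots$ are exactly the greedy digits of $T_{\beta_i}(x)$ in the shifted base $\B^{(i+1)}$, yielding the required equality everywhere. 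I expect the main, though still mild, obstacle to be the careful bookkeeping of the shifted bases $\B^{(i)}$ in the measurability step; beyond this, every item reduces to a routine application of the recursive structure of the greedy algorithm.
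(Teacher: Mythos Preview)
Your proposal is correct and follows the same approach as the paper, which simply asserts in one line that ``$\psi_{\B}\circ T_{\B}=\sigma_p\circ \psi_{\B}$ and $\psi_{\B}$ is injective''; you have just filled in the routine details (well-definedness, measurability via cylinder preimages being intervals, and the shift identity $\sigma(d_{\B^{(i)}}(x))=d_{\B^{(i+1)}}(T_{\beta_i}(x))$) that the paper leaves to the reader.
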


\begin{proof}
It is easily seen that $\psi_{\B}\circ T_{\B}=\sigma_p\circ \psi_{\B}$ and that $\psi_{\B}$ is injective. 
\end{proof}

However, since $\psi_{\B}$ is not surjective, it does not define a topological isomorphism.

\begin{remark}
In view of Theorem~\ref{Thm : AlternateBaseIsomorphism}, the set $\bigcup_{i=0}^{p-1} (\{i\}\times S_{\B^{(i)}})$ can be seen as the \emph{$\B$-shift}, that is, the generalization of the $\beta$-shift to alternate bases. However, in the previous work \cite{Charlier&Cisternino:2020}, what we called the $\B$-shift is the union $\bigcup_{i=0}^{p-1} S_{\B^{(i)}}$. This definition was motivated by the  following combinatorial result : the set $\bigcup_{i=0}^{p-1} S_{\B^{(i)}}$ is sofic if and only if for every $i\in\Int$, the quasi-greedy $\B^{(i)}$-representation of $1$ is ultimately periodic. In summary, we can say that there are two ways to extend the notion of $\beta$-shift to alternate bases $\B$, depending on the way we look at it: either as a dynamical object or as a combinatorial object.
\end{remark}

Thanks to Theorem~\ref{Thm : AlternateBaseIsomorphism}, we obtain an analogue of Theorem~\ref{Thm :  GlobalTheoremAlternateBase} for the transformation $\sigma_p$.

\begin{theorem}
The measure $\rho_{\B}$ is the unique $\sigma_p$-invariant probability measure on $\mathcal{G}_{\B}$ that is absolutely continuous with respect to $\lambda_p\circ\psi_{\B}^{-1}$. Furthermore, $\rho_{\B}$ is equivalent to $\lambda_p\circ\psi_{\B}^{-1}$ on $\mathcal{G}_{\B}$ and the dynamical system $	\big(\bigcup_{i=0}^{p-1} (\{i\}\times S_{\B^{(i)}}),\mathcal{G}_{\B},\rho_{\B},\sigma_p\big)$ is ergodic and has entropy $\frac{1}{p} \log(\beta_{p-1}\cdots\beta_0)$.
\end{theorem}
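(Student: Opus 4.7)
The plan is to transport every assertion across the isomorphism $\psi_{\B}$ established in Theorem~\ref{Thm : AlternateBaseIsomorphism}, with $\rho_{\B}:=\mu_{\B}\circ\psi_{\B}^{-1}$. Since $\psi_{\B}$ is measurable from $\mathcal{T}_p$ to $\mathcal{G}_{\B}$, the preimage $\psi_{\B}^{-1}(A)$ of any $A\in\mathcal{G}_{\B}$ lies in $\mathcal{T}_p$, so the pullbacks below are well defined. The $\sigma_p$-invariance of $\rho_{\B}$ is immediate from the conjugation relation $\sigma_p\circ\psi_{\B}=\psi_{\B}\circ T_{\B}$ combined with the $T_{\B}$-invariance of $\mu_{\B}$ given by Theorem~\ref{Thm : GlobalTheoremAlternateBase}.

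Next I would verify equivalence with $\lambda_p\circ\psi_{\B}^{-1}$. For $A\in\mathcal{G}_{\B}$, the relation $(\lambda_p\circ\psi_{\B}^{-1})(A)=0$ amounts to $\lambda_p(\psi_{\B}^{-1}(A))=0$, which by the equivalence $\mu_{\B}\sim\lambda_p$ (Theorem~\ref{Thm : GlobalTheoremAlternateBase}) is equivalent to $\mu_{\B}(\psi_{\B}^{-1}(A))=\rho_{\B}(A)=0$. So $\rho_{\B}$ is equivalent to $\lambda_p\circ\psi_{\B}^{-1}$ on $\mathcal{G}_{\B}$, and in particular absolutely continuous.

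For ergodicity, suppose $A\in\mathcal{G}_{\B}$ satisfies $\sigma_p^{-1}(A)=A$. Then $\psi_{\B}^{-1}(A)\in\mathcal{T}_p$ is $T_{\B}$-invariant, since $T_{\B}^{-1}(\psi_{\B}^{-1}(A))=\psi_{\B}^{-1}(\sigma_p^{-1}(A))=\psi_{\B}^{-1}(A)$. Applying ergodicity from Theorem~\ref{Thm : GlobalTheoremAlternateBase} gives $\mu_{\B}(\psi_{\B}^{-1}(A))\in\{0,1\}$, i.e.\ $\rho_{\B}(A)\in\{0,1\}$. Uniqueness then follows exactly as in the proof of Theorem~\ref{Thm : GlobalTheoremAlternateBase}: any $\sigma_p$-invariant probability measure absolutely continuous with respect to $\lambda_p\circ\psi_{\B}^{-1}$ must coincide with $\rho_{\B}$ by the standard Ergodic Theorem argument (see~\cite[Theorem 3.1.2]{Dajani&Kalle:2021}). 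Finally, entropy is an isomorphism invariant, so from Theorem~\ref{Thm : AlternateBaseIsomorphism} and Theorem~\ref{Thm : GlobalTheoremAlternateBase} the shift system inherits entropy $\tfrac{1}{p}\log(\beta_{p-1}\cdots\beta_0)$.

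There is no substantive obstacle here; the only point requiring a small amount of care is that $\psi_{\B}$, while measure-preservingly injective (hence an isomorphism in the measure-theoretic sense), is not surjective, so one must resist the temptation to push measures in the wrong direction. All verifications are done by pulling back through $\psi_{\B}^{-1}$, where measurability is automatic, which is why every step reduces cleanly to the corresponding property on the greedy side.
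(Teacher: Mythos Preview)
Your proposal is correct and follows exactly the approach the paper intends: the paper does not give a separate proof but simply states the theorem as an immediate consequence of the isomorphism in Theorem~\ref{Thm : AlternateBaseIsomorphism} (with $\rho_{\B}=\mu_{\B}\circ\psi_{\B}^{-1}$), and your write-up is a faithful unpacking of that transport of structure. Your closing remark about only pulling back through $\psi_{\B}^{-1}$ is the right point of care, and nothing more is needed.
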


\begin{remark}
Let $D_{\B}'$ denote the subset of $A_{\B}^\N$ made of all lazy $\B$-expansions of real numbers in $(x_{\B}-1, x_{\B}]$ and let $S_{\B}'$ denote the topological closure of $D_{\B}'$ with respect to the prefix distance of infinite words. From Remark~\ref{Rem : chiffres-lazy}, it is easily seen that 
\[
	\theta_{\B}\colon \bigcup_{i=0}^{p-1} (\{i\}\times S_{\B^{(i)}}) \to \bigcup_{i=0}^{p-1} (\{i\}\times S_{\B^{(i)}}'),\ 
	(i,a_0a_1\cdots)\mapsto (i,(\ceil{\beta_i}-1-a_0)(\ceil{\beta_{i+1}}-1-a_2)\cdots)
\]
defines a isomorphism from $	\big(\bigcup_{i=0}^{p-1} (\{i\}\times S_{\B^{(i)}}),\mathcal{G}_{\B},\rho_{\B},\sigma_p\big)$ to $\big(\bigcup_{i=0}^{p-1} (\{i\}\times S_{\B^{(i)}}'),\mathcal{G}_{\B}',\rho_{\B}\circ\theta_{\B}^{-1},\sigma_p'\big)$ where 
\begin{align*}
	\mathcal{G}_{\B}'
	&=\Bigg\{\bigcup_{i=0}^{p-1}\big(\{i\}\times (C_i\cap S_{\B^{(i)}}')\big)
	\colon C_i\in \mathcal{C}_{A_{\B}}\Bigg\}	\\
	\sigma_p' &\colon 
	\bigcup_{i=0}^{p-1} (\{i\}\times S_{\B^{(i)}}') \to 	\bigcup_{i=0}^{p-1} (\{i\}\times S_{\B^{(i)}}'),\ 
	(i,w)\mapsto ((i+1)\bmod p,\sigma(w)).
\end{align*}
We then deduce from Theorem~\ref{Thm : IsomorphismLazy}  and~\ref{Thm : AlternateBaseIsomorphism} that $\theta_{\B}\circ \psi_{\B}\circ\phi_{\B}^{-1}$ is an isomorphism from $\big(\bigcup_{i=0}^{p-1} 
	\big(\{i\} \times (x_{\B^{(i)}}-1, x_{\B^{(i)}}] \big),
	\mathcal{L}_{\B},\mu_{\B} \circ \phi_{\B}^{-1},L_{\B}\big)$ to $\big(\bigcup_{i=0}^{p-1} (\{i\}\times S_{\B^{(i)}}'),\mathcal{G}_{\B}',\rho_{\B}\circ\theta_{\B}^{-1},\sigma_p'\big)$ where here $\phi_{\B}$ denoted the restricted map
\[
	\phi_{\B}\colon 
	\bigcup_{i=0}^{p-1} \big(\{i\} \times [0,1) \big)	
	\to 	\bigcup_{i=0}^{p-1} \big(\{i\} \times (x_{\B^{(i)}}-1,x_{\B^{(i)}}] \big),\ 
	(i,x)\mapsto \big(i, x_{\B^{(i)}}-x\big).
\]
It is easy to check that, as expected, that for all $(i,x)\in \bigcup_{i=0}^{p-1} 
	\big(\{i\} \times (x_{\B^{(i)}}-1, x_{\B^{(i)}}]$,  we have
$\theta_{\B}\circ \psi_{\B}\circ\phi_{\B}^{-1}(i,x)=(i,\ell_{\B^{(i)}}(x))$ where $\ell_{\B}(x)$ denoted the lazy $\B$-expansion of $x$.
\end{remark}

\section{$\B$-expansions and $(\beta_{p-1}\cdots\beta_0,\Delta_{\B})$-expansions}
\label{Section : ComparisonBexpansionProduct}

By rewriting Equality~\eqref{Eq : valueAlternatBase} from Section~\ref{Section : AlternateBases} as
\begin{align}
\label{Eq : BexpansionProduct}
x&= \frac{\beta_{p-1}\cdots\beta_1 a_0 +  \beta_{p-1}\cdots\beta_2 a_1 + \cdots +a_{p-1}}{\beta_{p-1}\cdots\beta_0} \\
& + \frac{\beta_{p-1}\cdots\beta_1 a_p 
+ \beta_{p-1}\cdots\beta_1 a_{p+1} + \cdots +a_{2p-1}}{(\beta_{p-1}\cdots\beta_0)^2}\nonumber \\
& +	\cdots\nonumber
\end{align} 
we can see the greedy and lazy $\B$-expansions of real numbers as $(\beta_{p-1}\cdots\beta_0)$-representations over the digit set 
\[
	\Delta_{\B}=\Bigg\{\sum_{i=0}^{p-1}\beta_{p-1}\cdots\beta_{i+1} c_i
	\colon \forall i\in \Int,\ c_i\in [\![0,\ceil{\beta_i}-1]\!]\Bigg\}.
\]
In this section, we examine some cases where by considering the greedy (resp.\ lazy) $\B$-expansion and rewriting it as~\eqref{Eq : BexpansionProduct}, the obtained representation is the greedy (resp.\ lazy) $(\beta_{p-1}\cdots\beta_0,\Delta_{\B})$-expansion. We first recall the formalism of $\beta$-expansions of real numbers over a general digit set~\cite{Pedicini:2005}. 

\subsection{Real base expansions over general digit sets}
Consider an arbitrary finite set $\Delta=\{d_0,d_1,\ldots,d_m\}\subset \R$ where $0=d_0<d_1<\cdots<d_m$. Then a \emph{$(\beta,\Delta)$-representation} of a real number $x$ in the interval $[0,\frac{d_m}{\beta-1})$ is an infinite sequence $a_0a_1a_2\cdots$ over $\Delta$ such that $x=\sum_{n=0}^\infty \frac{a_n}{\beta^{n+1}}$. Such a set $\Delta$ is called an \emph{allowable digit set for $\beta$} if 
\begin{equation}
\label{Eq : Pedicini}
	\max_{k\in[\![0,m-1]\!]}(d_{k+1}-d_k)\le \frac{d_m}{\beta-1}.
\end{equation} 
In this case, the \emph{greedy $(\beta,\Delta)$-expansion} of a real number $x\in [0,\frac{d_m}{\beta-1})$ is defined recursively as follows: if the first $N$ digits of the greedy $(\beta,\Delta)$-expansion of $x$ are given by $a_0,\ldots,a_{N-1}$, then the next digit $a_N$ is the greatest element in $\Delta$ such that 
\[
	\sum_{n=0}^N\frac{a_n}{\beta^{n+1}} \le x.
\]
The greedy $(\beta,\Delta)$-expansion can also be obtained by iterating the \emph{greedy $(\beta,\Delta)$-transfor\-mation} 
\[
	T_{\beta,\Delta} \colon 
	[0,\tfrac{d_m}{\beta-1}) \to [0,\tfrac{d_m}{\beta-1}),\ 
	x \mapsto
	\begin{cases}										\beta x-d_k 	&\text{if }x\in[\tfrac{d_k}{\beta}, \tfrac{d_{k+1}}{\beta}),\ k\in[\![0,m-1]\!]\\			\beta x-d_m 	&\text{if }x\in[\tfrac{d_m}{\beta},\tfrac{d_m}{\beta-1})
\end{cases}
\]
as follows: for all $n\in\N$, $a_n$ is the greatest digit $d$ in $\Delta$ such that $\frac{d}{\beta}\le T^n_{\beta,\Delta}(x)$~\cite{Dajani&Kalle:2007}.

\begin{example}
\label{Ex : DigitSetGreedy}
Consider the digit set $\Delta=\{0,1,\varphi+\frac{1}{\varphi},\varphi^2\}$. It is easily checked that $\Delta$ is an allowable digit set for $\varphi$.
The greedy $(\varphi,\Delta)$-transformation
\[
	T_{\varphi,\Delta}\colon 
	[0,\tfrac{\varphi^2}{\varphi-1})\to [0,\tfrac{\varphi^2}{\varphi-1}),\
	x\mapsto
\begin{cases}
\varphi x  		& \text{if }x\in [0,\tfrac{1}{\varphi})\\
\varphi x - 1	& \text{if }x\in[\tfrac{1}{\varphi}, 1+\tfrac{1}{\varphi^2})\\
\varphi x - (\varphi+\tfrac{1}{\varphi}) & \text{if } x\in[1+\tfrac{1}{\varphi^2},\varphi)\\
\varphi x - \varphi^2 & \text{if } x \in[\varphi, \tfrac{\varphi^2}{\varphi-1})
\end{cases}
\]
is depicted in Figure~\ref{Fig : TvarphiDelta}.
\begin{figure}[htb]
\centering
\begin{tikzpicture}[scale=0.3]
\draw[line width=0.0mm ] (0,0) rectangle (16.9443,16.9443); 
\draw[line width=0.0mm ] (0,0)  to (16.9443,16.9443); 
\draw (-0.1,-0.2) node[left]{$0$};
\draw (2.47214,0) node[below]{$\smath{\frac{1}{\varphi}}$};
\draw (5.52786-0.5,0) node[below]{$\smath{1+\frac{1}{\varphi^2}}$};
\draw (6.9,0) node[below]{$\smath{\varphi}$};
\draw (16.9443,0) node[below]{$\smath{\frac{\varphi^2}{\varphi-1}}$};
\draw (0,3.9) node[left]{$\smath{1}$};
\draw (0,5.1) node[left]{$\smath{\frac{2}{\varphi}}$};
\draw (0,1.52786)  node[left]{$\smath{\frac{1}{\varphi^2}}$};
\draw (0,16.9443) node[left]{$\smath{\frac{\varphi^2}{\varphi-1}}$};
\draw[line width=0.4mm,blue] (0,0)  to (2.47214,4);
\draw[line width=0.4mm,blue] (2.47214,0) to (5.52786, 4.94427);
\draw[line width=0.4mm,blue] (5.52786,0) to (6.4721, 1.52786);
\draw[line width=0.4mm,blue] (6.4721,0) to (16.9443, 16.9443);
\draw[dotted] (2.47214,0) to (2.47214,16.9443); 
\draw[dotted] (5.52786,0) to (5.52786,16.9443); 
\draw[dotted] (6.4721,0) to (6.4721,16.9443); 
\draw[dotted] (0,4) to (16.9443,4); 
\draw[dotted] (0,4.94427) to (16.9443,4.94427); 
\draw[dotted] (0,1.52786) to (16.9443,1.52786); 
\end{tikzpicture}
\caption{The transformation $T_{\varphi,\Delta}$ for  $\Delta=\{0,1,\frac{\varphi+1}{\varphi},\varphi^2\}$.}
\label{Fig : TvarphiDelta}
\end{figure}
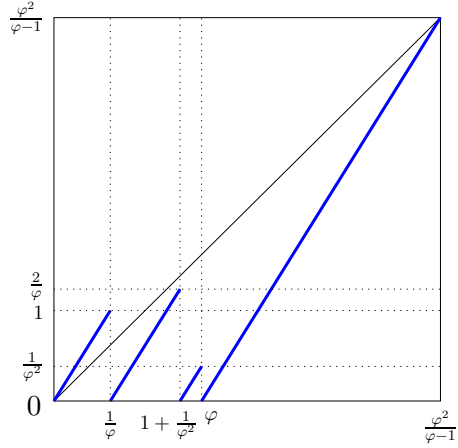
\end{example}

Similarly, if $\Delta$ is an allowable digit set for $\beta$, then the \emph{lazy $(\beta,\Delta)$-expansion} of a real number $x\in (0,\frac{d_m}{\beta-1}]$ is defined recursively as follows: if the first $N$ digits of the lazy $(\beta,\Delta)$-expansion of $x$ are given by $a_0,\ldots,a_{N-1}$, then the next digit $a_N$ is the least element in $\Delta$ such that 
\[
	\sum_{n=0}^N\frac{a_n}{\beta^{n+1}} 	+\sum_{n=N+1}^{\infty}\frac{d_m}{\beta^{n+1}}
	\ge x.
\]
The \emph{lazy $(\beta,\Delta)$-transformation}
\[
	L_{\beta,\Delta} \colon 
	(0,\tfrac{d_m}{\beta-1}] \to (0,\tfrac{d_m}{\beta-1}],\ 
	x \mapsto
	\begin{cases}		
	\beta x 		&\text{if }x\in (0,\tfrac{d_m}{\beta-1}{-}\tfrac{d_m}{\beta}]\\
	\beta x-d_k 	&\text{if }x\in (\tfrac{d_m}{\beta-1}{-}\tfrac{d_m-d_{k-1}}{\beta},\tfrac{d_m}{\beta-1}{-}\tfrac{d_m-d_{k}}{\beta}],\ k\in[\![1,m]\!]
\end{cases}
\]
can be used to obtain the digits of the lazy $(\beta,\Delta)$-expansions: for all $n\in\N$, $a_n$ is the least digit $d$ in $\Delta$ such that $\frac{d}{\beta} +\sum_{k=1}^{\infty}\frac{d_m}{\beta^{k+1}} \ge L^n_{\beta,\Delta}(x)$~\cite{Dajani&Kalle:2007}.

In~\cite{Dajani&Kalle:2007}, it is shown that if $\Delta$ is an allowable digit set for $\beta$ then so is the set $\widetilde{\Delta}:=\{0,d_m{-}d_{m-1},\ldots, d_m{-}d_1,d_m\}$ and
\[
	\phi_{\beta,\Delta} \colon 
	[0,\tfrac{d_m}{\beta-1}) \to (0,\tfrac{d_m}{\beta-1}],\ 
	x \mapsto \tfrac{d_m}{\beta-1}-x
\]
is a bicontinuous bijection satisfying $L_{\beta,\widetilde{\Delta}}\, \circ \,\phi_{\beta,\Delta} = \phi_{\beta,\Delta}\, \circ\,T_{\beta,\Delta}$.

\begin{example}\label{Ex : DigitSetLazy}
Consider the digit set $\widetilde{\Delta}$ where $\Delta$ is the digit set from Example~\ref{Ex : DigitSetGreedy}. We get $\widetilde{\Delta}=\{0,1-\frac{1}{\varphi},\varphi,\varphi^2\}$.
The lazy $(\varphi,\widetilde{\Delta})$-transformation
\[
	L_{\varphi,\widetilde{\Delta}}\colon 
	(0,\tfrac{\varphi^2}{\varphi-1}]\to (0,\tfrac{\varphi^2}{\varphi-1}],\
	x\mapsto
\begin{cases}
\varphi x  		& \text{if }x\in (0,\tfrac{\varphi}{\varphi-1}]\\
\varphi x - (1-\frac{1}{\varphi})	& \text{if }x\in(\tfrac{\varphi}{\varphi-1}, \tfrac{\varphi+3}{\varphi}]\\
\varphi x - \varphi & \text{if } x\in(\tfrac{\varphi+3}{\varphi},\tfrac{2\varphi-1}{\varphi-1}]\\
\varphi x - \varphi^2 & \text{if } x \in(\tfrac{2\varphi-1}{\varphi-1}, \tfrac{\varphi^2}{\varphi-1}]
\end{cases}
\]
is depicted in Figure~\ref{Fig : LvarphiDelta}. It is conjugate to the greedy $(\varphi,\Delta)$-transformation $T_{\varphi,\Delta}$ by $\phi_{\varphi,\Delta} \colon [0,\tfrac{\varphi^2}{\varphi-1}) \to (0,\tfrac{\varphi^2}{\varphi-1}],\ x \mapsto \tfrac{\varphi^2}{\varphi-1}-x$.
\begin{figure}[htb]
\centering
\begin{tikzpicture}[scale=0.3]
\draw[line width=0.0mm ] (0,0) rectangle (16.9443,16.9443); 
\draw[line width=0.0mm ] (0,0)  to (16.9443,16.9443); 
\draw (-0.1,-0.2) node[left]{$0$};
\draw ( 10.4721-0.5,0) node[below]{$\smath{\tfrac{\varphi}{\varphi-1}}$};
\draw(11.4164+0.7,0)  node[below]{$\smath{\tfrac{\varphi+3}{\varphi}}$};
\draw(14.4721,0)node[below]{$\smath{\tfrac{2\varphi-1}{\varphi-1}}$};
\draw (16.9443,0) node[below]{$\smath{\frac{\varphi^2}{\varphi-1}}$};
\draw[line width=0.4mm,blue] (0,0)  to (10.4721,16.9443);
\draw[line width=0.4mm,blue] (10.4721,15.4164) to (11.4164, 16.9443);
\draw[line width=0.4mm,blue] (11.4164,12) to (14.4721, 16.9443);
\draw[line width=0.4mm,blue] (14.4721, 12.9443) to (16.9443,16.9443); 
\draw[dotted] (10.4721,0) to (10.4721,16.9443); 
\draw[dotted] (11.4164,0) to (11.4164,16.9443); 
\draw[dotted] (14.4721,0) to (14.4721,16.9443);
\draw[dotted] (0,15.4164) to (16.9443,15.4164); 
\draw (0,15.4164) node[left]{$\smath{3\varphi-1}$};
\draw[dotted] (0,12.0647) to (16.9443,12.0647); 
\draw (0,12) node[left]{$\smath{3}$};
\draw[dotted] (0,13.009) to (16.9443,13.009); 
\draw (0,12) node[left]{$\smath{3}$};
\draw (0,12.9443) node[left]{$\smath{\frac{2}{\varphi-1}}$};
\draw (0,16.9443) node[left]{$\smath{\frac{\varphi^2}{\varphi-1}}$};
\end{tikzpicture}
\caption{The transformation $L_{\varphi,\widetilde{\Delta}}$ for  $\Delta=\{0,1,\varphi+\frac{1}{\varphi},\varphi^2\}$.}
\label{Fig : LvarphiDelta}
\end{figure}
\end{example}

\subsection{Comparison between $\B$-expansions and $(\beta_{p-1}\cdots\beta_0,\Delta_{\B})$-expansions}

The digit set $\Delta_{\B}$ has cardinality at most $\prod_{i=0}^{p-1} \ceil{\beta_i}$ and can be rewritten $\Delta_{\B}=\mathrm{im}(f_{\B})$ where 
\[
	f_{\B} \colon 
	\prod_{i=0}^{p-1}\ [\![0,\ceil{\beta_i}-1]\!]\to\R,\ 
	(c_0,\ldots, c_{p-1}) 
	\mapsto \sum_{i=0}^{p-1}\beta_{p-1}\cdots \beta_{i+1}c_i.
\]
Note that $f_{\B}$ is not injective in general. Let us write $\Delta_{\B}=\{d_0,d_1\ldots,d_m\}$ with $d_0<d_1<\cdots < d_m$. We have $d_0=f_{\B}(0,\ldots,0)=0$, $d_1=f_{\B}(0,\ldots,0,1)=1$ and $d_m=f_{\B}(\ceil{\beta_0}-1,\ldots,\ceil{\beta_{p-1}}-1)$. In what follows, we suppose that $\prod_{i=0}^{p-1}\ [\![0,\ceil{\beta_i}-1]\!]$ is equipped with the lexicographic order:  $(c_0,\ldots,c_{p-1})<_\lex(c'_0,\ldots,c'_{p-1})$ if there exists $i\in\Int$ such that $c_0=c_0',\ldots,c_{i-1}=c_{i-1}'$ and $c_i<c_i'$.

\begin{lemma}
\label{Lem : AllowableDiditSet}
The set $\Delta_{\B}$ is an allowable digit set for $\beta_{p-1}\cdots \beta_0$.
\end{lemma}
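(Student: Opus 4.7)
The plan is to reduce the inequality \eqref{Eq : Pedicini} to the existence of $\B$-expansions, which is already available from Section~\ref{Section : AlternateBases}. The key observation is that the quantity $d_m/(\beta_{p-1}\cdots\beta_0-1)$ appearing in Pedicini's condition coincides with $x_{\B}$.

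First, I would establish the identity $\tfrac{d_m}{\beta_{p-1}\cdots\beta_0-1}=x_{\B}$. This is a routine geometric series computation: divide $d_m=\sum_{i=0}^{p-1}\beta_{p-1}\cdots\beta_{i+1}(\ceil{\beta_i}-1)$ by $\beta_{p-1}\cdots\beta_0$, multiply by $\sum_{j=0}^{\infty}(\beta_{p-1}\cdots\beta_0)^{-j}$, and recognize the double sum (using periodicity $\beta_{n}=\beta_{n\bmod p}$) as the series defining $x_{\B}$ in~\eqref{Eq : x_B}.

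Next, I would observe that every $x\in[0,x_{\B})$ admits a $(\beta_{p-1}\cdots\beta_0,\Delta_{\B})$-representation. Indeed, its greedy $\B$-expansion $a_0a_1a_2\cdots$ satisfies $a_n\in\Int_n:=[\![0,\ceil{\beta_n}-1]\!]$ and sums to $x$ (this follows from Proposition~\ref{Pro : LexMax} by letting $n\to\infty$, since the remainder $\pi_2\circ T_{\B}^n\circ\delta_0(x)$ stays bounded). Grouping the digits into blocks of length $p$ exactly as in~\eqref{Eq : BexpansionProduct} yields $x=\sum_{j=0}^{\infty}A_j/(\beta_{p-1}\cdots\beta_0)^{j+1}$ with $A_j=f_{\B}(a_{jp},\ldots,a_{jp+p-1})\in\Delta_{\B}$.

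Finally, I would argue by contradiction that $\max_{k}(d_{k+1}-d_k)\le x_{\B}$. Writing $\beta=\beta_{p-1}\cdots\beta_0$, suppose some gap $d_{k+1}-d_k>x_{\B}$. Choose $z\in(d_k+x_{\B},d_{k+1})$ (nonempty by assumption), and set $x=z/\beta$. Since $z<d_m<\beta x_{\B}$, we have $x\in[0,x_{\B})$, so by the previous step there exist $A_0\in\Delta_{\B}$ and $r=\sum_{j\ge 1}A_j/\beta^{j+1}\in[0,x_{\B}/\beta]$ with $x=A_0/\beta+r$. This forces $A_0\in[\beta x-x_{\B},\beta x]=[z-x_{\B},z]\subseteq(d_k,d_{k+1})$, which is disjoint from $\Delta_{\B}$, a contradiction. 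The main obstacle is being careful about the endpoint $r=x_{\B}/\beta$ (when $A_j=d_m$ for all $j\ge 1$), which is why one picks $z$ strictly inside the forbidden interval rather than at an endpoint.
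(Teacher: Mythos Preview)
Your argument is correct, but it proceeds along a genuinely different route from the paper's. The paper proves the stronger statement that $d_{k+1}-d_k\le 1$ for every $k$ by a direct telescoping computation: for lexicographically consecutive tuples $(c_0,\ldots,c_{p-1})$ and $(c_0',\ldots,c_{p-1}')$ that differ first at position $j$, one has
\[
f_{\B}(c')-f_{\B}(c)=\beta_{p-1}\cdots\beta_{j+1}-\sum_{i=j+1}^{p-1}\beta_{p-1}\cdots\beta_{i+1}(\ceil{\beta_i}-1)\le 1,
\]
and since the lexicographic enumeration starts at $d_0=0$ and ends at $d_m$ with forward steps of size at most $1$, no gap in $\Delta_{\B}$ can exceed $1$. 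Combined with $d_m\ge\beta_{p-1}\cdots\beta_0-1$, this gives~\eqref{Eq : Pedicini}.

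Your approach instead leverages the already-established existence of greedy $\B$-expansions on $[0,x_{\B})$: you identify $x_{\B}=d_m/(\beta_{p-1}\cdots\beta_0-1)$, observe that block-grouping a $\B$-expansion yields a $(\beta_{p-1}\cdots\beta_0,\Delta_{\B})$-representation, and then derive~\eqref{Eq : Pedicini} by the standard contrapositive (a gap larger than $x_{\B}$ would leave some $x\in[0,x_{\B})$ unrepresentable). This is more conceptual and reuses machinery from Section~\ref{Section : AlternateBases}, but it only yields the bound $d_{k+1}-d_k\le x_{\B}$ rather than the sharper $d_{k+1}-d_k\le 1$, and it is less self-contained. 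The paper's direct computation also feeds naturally into the later Proposition~\ref{Pro : CaractNonDecreasing}, where the same telescoping expression reappears.
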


\begin{proof}
We need to check Condition~\eqref{Eq : Pedicini}. We have $d_0=0$ and
\[
	d_m=f_{\B}(\ceil{\beta_0}-1,\ldots,\ceil{\beta_{p-1}}-1)
	\ge \sum_{i=0}^{p-1}\beta_{p-1}\cdots \beta_{i+1}(\beta_i-1)
	=\beta_{p-1}\cdots\beta_0-1,
\]
Therefore, it suffices to show that for all $k\in [\![0,m-1 ]\!]$, $d_{k+1}-d_k\le1$. Thus, we only have to show that $f(c_0',\ldots,c_{p-1}')-f(c_0,\ldots,c_{p-1})\le 1$ where $(c_0,\ldots,c_{p-1})$ and $(c'_0,\ldots,c'_{p-1})$ are lexicographically consecutive elements of $\prod_{i=0}^{p-1}\ [\![0,\ceil{\beta_i}-1]\!]$. For such $p$-tuples, there exists $j\in \Int$ such that $c_0=c'_0,\ldots,c_{j-1}=c'_{j-1}$, $c_j=c'_j-1$, $c_{j+1}=\ceil{\beta_{j+1}}-1,\ldots,c_{p-1}=\ceil{\beta_{p-1}}-1$ and $c'_{j+1}=\cdots=c'_{p-1}=0$. Then
\begin{align*}
f(c_0',\ldots,c_{p-1}')-f(c_0,\ldots,c_{p-1})	
	&=\beta_{p-1}\cdots\beta_{j+1}
	-\sum_{i=j+1}^{p-1}\beta_{p-1}\cdots\beta_{i+1}(\ceil{\beta_{i}}-1)\\
	&\le \beta_{p-1}\cdots\beta_{j+2}
	-\sum_{i=j+2}^{p-1}\beta_{p-1}\cdots\beta_{i+1}(\ceil{\beta_i}-1)\\	
	& \ \,  \vdots\\
	\medskip
	&\le \beta_{p-1}-(\ceil{\beta_{p-1}}-1)\\
&\le 1.
\end{align*}
\end{proof}

Since $x_{\B}=\frac{d_m}{\beta_{p-1}\cdots\beta_0-1}$, it follows from Lemma~\ref{Lem : AllowableDiditSet} that every point in $[0,x_{\B})$ admits a greedy $(\beta_{p-1}\cdots\beta_0,\Delta_{\B})$-expansion.

Let us restate Proposition~\ref{Pro : LexMax} when $n$ equals $p$ in terms of the map $f_{\B}$.

\begin{lemma}
\label{Lem : LexMaxP}
For all $x\in[0,x_{\B})$, we have
\[
	\pi_2 \circ T_{\B}^p \circ \delta_0 (x)
	=\beta_{p-1}\cdots\beta_0 x-f_{\B}(c)
\]
where $c$ is the lexicographically greatest $p$-tuple in $\prod_{i=0}^{p-1}\ [\![0,\ceil{\beta_i}-1]\!]$  such that $\frac{ f_{\B}(c)}{\beta_{p-1}\cdots\beta_0} \le x$. 
\end{lemma}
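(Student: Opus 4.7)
The plan is essentially a direct substitution: set $n = p$ in Proposition~\ref{Pro : LexMax} and recognize the resulting sum as the value of $f_{\B}$ on the digit tuple.

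More precisely, first I would take Proposition~\ref{Pro : LexMax} with $n := p$, which gives
\[
\pi_2 \circ T_{\B}^{p} \circ \delta_0(x)
= \beta_{p-1}\cdots\beta_0\, x - \sum_{k=0}^{p-1}\beta_{p-1}\cdots\beta_{k+1}c_k,
\]
where $(c_0,\ldots,c_{p-1})$ is the lexicographically greatest element of $\prod_{k=0}^{p-1}[\![0,\ceil{\beta_k}-1]\!]$ satisfying
\[
\frac{\sum_{k=0}^{p-1}\beta_{p-1}\cdots\beta_{k+1}c_k}{\beta_{p-1}\cdots\beta_0}\le x.
\]
Second, I would invoke the definition of $f_{\B}$, namely
\[
f_{\B}(c_0,\ldots,c_{p-1}) = \sum_{i=0}^{p-1}\beta_{p-1}\cdots\beta_{i+1}c_i,
\]
to rewrite both the right-hand side of the formula and the constraint in terms of $f_{\B}$, yielding exactly the statement of Lemma~\ref{Lem : LexMaxP}.

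There is no genuine obstacle here; the lemma is purely a notational repackaging of the $n=p$ case of Proposition~\ref{Pro : LexMax} using the map $f_{\B}$ introduced just before the lemma. The only thing to verify is the trivial equality between the two ways of writing $\sum_{k=0}^{p-1}\beta_{p-1}\cdots\beta_{k+1}c_k$, which is immediate from the definition of $f_{\B}$.
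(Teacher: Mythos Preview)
Your proposal is correct and matches the paper's approach exactly: the paper introduces Lemma~\ref{Lem : LexMaxP} simply as a restatement of Proposition~\ref{Pro : LexMax} in the case $n=p$ using the map $f_{\B}$, and provides no further argument.
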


\begin{proposition}
\label{Pro : plusgrand}
For all $x\in [0,x_{\B})$, we have $T_{\beta_{p-1}\cdots\beta_0,\Delta_{\B}}(x) \le \pi_2 \circ T_{\B}^p \circ \delta_0(x)$ and $L_{\beta_{p-1}\cdots\beta_0,\Delta_{\B}}(x) \ge \pi_2 \circ L_{\B}^p \circ \delta_0(x)$. 
\end{proposition}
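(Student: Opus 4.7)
The approach is to reduce both inequalities to the simple observation that the iterated transformations $T_\B^p$ and $L_\B^p$ select an element of $\Delta_\B$ subject to the same constraint that defines $T_{\beta,\Delta_\B}$ and $L_{\beta,\Delta_\B}$, but via a potentially different selection rule. Set $\beta=\beta_{p-1}\cdots\beta_0$. For the greedy inequality, I would invoke Lemma~\ref{Lem : LexMaxP} to write $\pi_2\circ T_\B^p\circ\delta_0(x)=\beta x-f_\B(c^\ast)$, where $c^\ast$ is the lexicographically greatest $p$-tuple in $\prod_i[\![0,\ceil{\beta_i}-1]\!]$ such that $f_\B(c^\ast)\le \beta x$. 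Since $\Delta_\B=\mathrm{im}(f_\B)$, the value $f_\B(c^\ast)$ lies in $\Delta_\B$ and satisfies $f_\B(c^\ast)\le \beta x$; but by definition $T_{\beta,\Delta_\B}(x)=\beta x-d^\ast$ with $d^\ast=\max\{d\in\Delta_\B:d\le\beta x\}$, so $f_\B(c^\ast)\le d^\ast$ and the inequality $\pi_2\circ T_\B^p\circ\delta_0(x)\ge T_{\beta,\Delta_\B}(x)$ follows.

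For the lazy inequality I would apply Proposition~\ref{Pro : LexMin} with $n=p$. The tail $\sum_{m=p}^{\infty}\frac{\ceil{\beta_m}-1}{\prod_{k=0}^m\beta_k}$ factors, using the periodicity convention $\beta_n=\beta_{n\bmod p}$, as $\frac{1}{\beta}\sum_{j=0}^{\infty}\frac{\ceil{\beta_j}-1}{\prod_{k=0}^j\beta_k}=\frac{x_\B}{\beta}$. Hence $\pi_2\circ L_\B^p\circ\delta_0(x)=\beta x-f_\B(c_\ast)$, where $c_\ast$ is the lexicographically least $p$-tuple with $f_\B(c_\ast)\ge\beta x-x_\B$. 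To compare with $L_{\beta,\Delta_\B}$ I need the identity $x_\B=\frac{d_m}{\beta-1}$; this follows by splitting the series~\eqref{Eq : x_B} into blocks of length $p$, noting that each block equals $\frac{1}{\beta^k}$ times the first block, and computing the first block to be $\frac{d_m}{\beta}$ since $d_m=\sum_{i=0}^{p-1}\beta_{p-1}\cdots\beta_{i+1}(\ceil{\beta_i}-1)$ by Lemma~\ref{Lem : AllowableDiditSet}. Once $x_\B=\frac{d_m}{\beta-1}$ is in hand, an unpacking of the definition shows $L_{\beta,\Delta_\B}(x)=\beta x-d_\ast$ with $d_\ast=\min\{d\in\Delta_\B:d\ge\beta x-x_\B\}$. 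Then $f_\B(c_\ast)\in\Delta_\B$ satisfies the same constraint, so $f_\B(c_\ast)\ge d_\ast$, yielding $\pi_2\circ L_\B^p\circ\delta_0(x)\le L_{\beta,\Delta_\B}(x)$.

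The conceptual content of the proof is entirely in the translation between the two indexings of elements of $\Delta_\B$: the alternate-base dynamics ranges over tuples $c\in\prod_i[\![0,\ceil{\beta_i}-1]\!]$ and picks the lex-extremal one, while the digit-set dynamics ranges directly over $\Delta_\B$ and picks the extremal element. Since $f_\B$ need not be injective, lex-extremal tuples need not map to extremal values; but the inequality always goes in the favorable direction. The only genuine computation, and the most likely place to slip up, is the constant identity $x_\B=\frac{d_m}{\beta-1}$ together with the reformulation of the tail series in Proposition~\ref{Pro : LexMin}; everything else is a direct comparison.
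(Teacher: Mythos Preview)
Your proof is correct. The greedy half is exactly the paper's argument: invoke Lemma~\ref{Lem : LexMaxP}, note that $f_\B(c^\ast)\in\Delta_\B$ satisfies the constraint $f_\B(c^\ast)\le\beta x$, and compare with the maximal element $d^\ast$.

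For the lazy half you take a different route from the paper. The paper does not use Proposition~\ref{Pro : LexMin} at all; instead it deduces the lazy inequality from the greedy one via the conjugacy of Theorem~\ref{Thm : IsomorphismLazy}. Concretely, $\phi_\B\circ T_\B^p=L_\B^p\circ\phi_\B$ together with the Dajani--Kalle conjugacy $\phi_{\beta,\Delta_\B}\circ T_{\beta,\Delta_\B}=L_{\beta,\widetilde{\Delta_\B}}\circ\phi_{\beta,\Delta_\B}$ and the observation $\widetilde{\Delta_\B}=\Delta_\B$ transform the already-proved greedy inequality into the lazy one by the substitution $x\mapsto x_\B-x$. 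Your direct approach via Proposition~\ref{Pro : LexMin} is equally valid and arguably more transparent, at the cost of the small tail computation and the identity $x_\B=\tfrac{d_m}{\beta-1}$ (which the paper records just before Lemma~\ref{Lem : LexMaxP}, so you may simply cite it). The paper's route is shorter because it recycles the greedy argument wholesale; yours is more self-contained and makes the symmetry between the two cases explicit rather than relying on the isomorphism machinery.
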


\begin{proof}
Let $x\in[0,x_{\B})$. On the one hand, $T_{\beta_{p-1}\cdots\beta_0,\Delta_{\B}}(x)=\beta_{p-1}\cdots\beta_0 x -d$ where $d$ is the greatest digit in $\Delta_{\B}$ such that $\frac{d}{\beta_{p-1}\cdots\beta_0} \le x$. On the other hand, by Lemma~\ref{Lem : LexMaxP}, $\pi_2 \circ T_{\B}^p \circ \delta_0(x)=\beta_{p-1}\cdots\beta_0 x -f_{\B}(c)$ where $c$ is the greatest $p$-tuple in $\prod_{i=0}^{p-1}\ [\![0,\ceil{\beta_i}-1]\!]$ such that $\frac{f_{\B}(c)}{\beta_{p-1}\cdots\beta_0} \le x$. By definition of $d$, we get $d\ge f_{\B}(c)$. Therefore, we obtain that $T_{\beta_{p-1}\cdots\beta_0,\Delta_{\B}}(x) \le \pi_2 \circ T_{\B}^p \circ \delta_0(x)$. The inequality $L_{\beta_{p-1}\cdots\beta_0,\Delta_{\B}}(x) \ge \pi_2 \circ L_{\B}^p \circ \delta_0(x)$ then follows from Theorem~\ref{Thm : IsomorphismLazy}.
\end{proof}

In what follows, we provide some conditions under which the inequalities of Proposition~\ref{Pro : plusgrand} happen to be equalities.

\begin{proposition}
\label{Pro : GreedyCoincideIIFLazyCoincide}
The transformations $T_{\beta_{p-1}\cdots\beta_0,\Delta_{\B}}$ and ${\pi_2 \circ T_{\B}^p \circ \delta_0}_{\big|[0,x_{\B})}$ coincide if and only if 
the transformations $L_{\beta_{p-1}\cdots\beta_0,\Delta_{\B}}$ and ${\pi_2 \circ L_{\B}^p \circ \delta_0}_{\big|(0,x_{\B}]}$ do.
\end{proposition}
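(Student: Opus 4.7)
The plan is to reduce the equivalence to a common symmetry: both sides of the claimed equivalence are conjugate, by the same involution $x \mapsto x_{\B}-x$, to their respective partners in the statement. The key algebraic fact is that the digit set $\Delta_{\B}$ is symmetric in the sense that $\widetilde{\Delta_{\B}} = \Delta_{\B}$, so the Dajani--Kalle conjugacy recalled just before Example~\ref{Ex : DigitSetLazy} links $T_{\beta_{p-1}\cdots\beta_0,\Delta_{\B}}$ and $L_{\beta_{p-1}\cdots\beta_0,\Delta_{\B}}$; the isomorphism of Theorem~\ref{Thm : IsomorphismLazy} similarly links the $p$-th iterates of $T_{\B}$ and $L_{\B}$; and these two conjugacies are given by the same formula.

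First, I would verify that $\widetilde{\Delta_{\B}} = \Delta_{\B}$. The involution
\[
\iota\colon (c_0,\ldots,c_{p-1}) \mapsto (\ceil{\beta_0}-1-c_0,\ldots,\ceil{\beta_{p-1}}-1-c_{p-1})
\]
on $\prod_{i=0}^{p-1}[\![0,\ceil{\beta_i}-1]\!]$ satisfies $f_{\B}(\iota(c)) = d_m - f_{\B}(c)$ by a direct computation, where $d_m = f_{\B}(\ceil{\beta_0}-1,\ldots,\ceil{\beta_{p-1}}-1)$. Hence $\Delta_{\B} = \mathrm{im}(f_{\B})$ is stable under $d \mapsto d_m - d$, which is precisely $\widetilde{\Delta_{\B}} = \Delta_{\B}$. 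Since $x_{\B} = \frac{d_m}{\beta_{p-1}\cdots\beta_0-1}$, the map $\phi\colon[0,x_{\B}) \to (0,x_{\B}]$, $x \mapsto x_{\B}-x$, then satisfies
\[
L_{\beta_{p-1}\cdots\beta_0,\Delta_{\B}} \circ \phi = \phi \circ T_{\beta_{p-1}\cdots\beta_0,\Delta_{\B}}.
\]

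Second, I would iterate Theorem~\ref{Thm : IsomorphismLazy} to get $\phi_{\B} \circ T_{\B}^p = L_{\B}^p \circ \phi_{\B}$. Evaluating at $(0,x)$ with $x\in[0,x_{\B})$, and using that $T_{\B}^p(0,x) \in \{0\}\times[0,x_{\B})$ together with $\phi_{\B}(0,\cdot) = (0,x_{\B}-\cdot)$, the second-coordinate identity reads
\[
\pi_2\circ L_{\B}^p \circ \delta_0(x_{\B}-x) \;=\; x_{\B} - \pi_2\circ T_{\B}^p\circ\delta_0(x)
\]
for all $x\in[0,x_{\B})$, i.e.\ ${\pi_2\circ L_{\B}^p\circ\delta_0}_{|(0,x_{\B}]}$ is conjugate to ${\pi_2\circ T_{\B}^p\circ\delta_0}_{|[0,x_{\B})}$ via the same involution $\phi$.

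Finally, the equivalence is immediate: if the two greedy maps coincide on $[0,x_{\B})$, then for any $y\in(0,x_{\B}]$, writing $x = x_{\B}-y$,
\[
L_{\beta_{p-1}\cdots\beta_0,\Delta_{\B}}(y) = x_{\B} - T_{\beta_{p-1}\cdots\beta_0,\Delta_{\B}}(x) = x_{\B} - \pi_2\circ T_{\B}^p\circ\delta_0(x) = \pi_2\circ L_{\B}^p\circ\delta_0(y),
\]
and the converse is symmetric. The only nontrivial step is the symmetry $\widetilde{\Delta_{\B}} = \Delta_{\B}$; once that is in hand, both directions follow from stringing together the two conjugacies. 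I do not anticipate serious obstacles — noninjectivity of $f_{\B}$ is harmless because only the image $\Delta_{\B}$ matters.
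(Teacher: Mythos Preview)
Your proposal is correct and follows essentially the same approach as the paper: both arguments hinge on the symmetry $\widetilde{\Delta_{\B}}=\Delta_{\B}$ (which you verify explicitly via the involution $\iota$, while the paper simply asserts it) together with the fact that the Dajani--Kalle conjugacy $\phi_{\beta_{p-1}\cdots\beta_0,\Delta_{\B}}$ and the isomorphism $\phi_{\B}$ of Theorem~\ref{Thm : IsomorphismLazy} act by the same formula $x\mapsto x_{\B}-x$ on the second coordinate. The paper's chain of equalities and your final displayed computation are the same argument.
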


\begin{proof}
We only show the forward direction, the backward direction being similar. Suppose that $T_{\beta_{p-1}\cdots\beta_0,\Delta_{\B}}={\pi_2 \circ T_{\B}^p \circ \delta_0}_{\big|[0,x_{\B})}$ and let $x\in(0,x_{\B}]$. Since $x_{\B}=\frac{d_m}{\beta_{p-1}\cdots\beta_0-1}$ and $\Delta_{\B}= \widetilde{\Delta_{\B}}$, we successively obtain that
\begin{align*}
	L_{\beta_{p-1}\cdots\beta_0,\Delta_{\B}}(x)
	&=L_{\beta_{p-1}\cdots\beta_0,\Delta_{\B}}
	\circ\phi_{\beta_{p-1}\cdots\beta_0,\Delta_{\B}}(x_{\B}-x)\\
	&=\phi_{\beta_{p-1}\cdots\beta_0,\Delta_{\B}}	\circ T_{\beta_{p-1}\cdots\beta_0,\Delta_{\B}}
(x_{\B}-x)\\
	&=\phi_{\beta_{p-1}\cdots\beta_0,\Delta_{\B}}	\circ\pi_2\circ T_{\B}^p\circ\delta_0(x_{\B}-x)\\
	&=\pi_2\circ\phi_{\B}\circ T_{\B}^p\circ\delta_0(x_{\B}-x)\\
	&=\pi_2\circ L_{\B}^p\circ\phi_{\B}\circ \delta_0(x_{\B}-x)\\
	&={\pi_2 \circ L_{\B}^p \circ \delta_0}(x).
\end{align*} 
\end{proof}

The next result provides us with a sufficient condition under which the transformations $T_{\beta_{p-1}\cdots\beta_0,\Delta_{\B}}$ and ${\pi_2 \circ T_{\B}^p \circ \delta_0}_{\big|[0,x_{\B})}$ coincide. Here, the non-decreasingness of the map $f_{\B}$ refers to the lexicographic order: for all $c,c'\in \prod_{i=0}^{p-1}\ [\![0,\ceil{\beta_i}-1]\!]$, $c<_{\lex}c'\implies f_{\B}(c) \le f_{\B}(c')$.

\begin{theorem}
\label{Thm : fIncreasingEqual}
If the map $f_{\B}$ is non-decreasing then $T_{\beta_{p-1}\cdots\beta_0,\Delta_{\B}}={\pi_2 \circ T_{\B}^p \circ \delta_0}_{\big|[0,x_{\B})}$. \end{theorem}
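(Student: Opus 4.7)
The plan is to leverage Lemma~\ref{Lem : LexMaxP} to recast both transformations in a comparable form, and then to argue that when $f_{\B}$ is non-decreasing for the lex order, the lex-greatest admissible $p$-tuple realizes the greatest admissible digit in $\Delta_{\B}$.

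More precisely, fix $x\in[0,x_{\B})$. By Lemma~\ref{Lem : LexMaxP}, we have
\[
	\pi_2 \circ T_{\B}^p \circ \delta_0 (x)
	=\beta_{p-1}\cdots\beta_0 x-f_{\B}(c),
\]
where $c$ is the lexicographically greatest $p$-tuple in $\prod_{i=0}^{p-1}[\![0,\ceil{\beta_i}-1]\!]$ such that $f_{\B}(c)\le \beta_{p-1}\cdots\beta_0 x$. By definition,
\[
	T_{\beta_{p-1}\cdots\beta_0,\Delta_{\B}}(x)
	=\beta_{p-1}\cdots\beta_0 x-d,
\]
where $d$ is the greatest element of $\Delta_{\B}$ satisfying $d\le \beta_{p-1}\cdots\beta_0 x$. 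So it suffices to show $f_{\B}(c)=d$. Proposition~\ref{Pro : plusgrand} already yields $f_{\B}(c)\le d$, so the only work is the reverse inequality.

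The key step is as follows. Let $c^*$ be the lexicographically greatest preimage of $d$ under $f_{\B}$, so that $f_{\B}(c^*)=d$ and $f_{\B}(c^*)\le \beta_{p-1}\cdots\beta_0 x$. Suppose by contradiction that some $c'>_{\lex}c^*$ also satisfies $f_{\B}(c')\le \beta_{p-1}\cdots\beta_0 x$. Since $f_{\B}$ is non-decreasing, $f_{\B}(c')\ge f_{\B}(c^*)=d$. As $f_{\B}(c')$ belongs to $\Delta_{\B}$ and is bounded above by $\beta_{p-1}\cdots\beta_0 x$, the maximality of $d$ forces $f_{\B}(c')=d$; but this contradicts the choice of $c^*$ as the lex-greatest preimage of $d$. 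Hence $c^*$ is itself the lex-greatest tuple with $f_{\B}(c^*)\le \beta_{p-1}\cdots\beta_0 x$, so $c=c^*$ and $f_{\B}(c)=d$, which finishes the proof.

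The main obstacle, if any, is purely bookkeeping: one must be careful that the non-decreasing hypothesis on $f_{\B}$ is only a weak inequality (distinct $p$-tuples may collapse to the same digit in $\Delta_{\B}$), which is why one has to pick the lex-greatest preimage $c^*$ rather than an arbitrary one. Otherwise the argument is a short combinatorial comparison of two extremal selections, requiring nothing beyond Lemma~\ref{Lem : LexMaxP} and the definition of $T_{\beta_{p-1}\cdots\beta_0,\Delta_{\B}}$.
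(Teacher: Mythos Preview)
Your proof is correct and follows essentially the same approach as the paper: both invoke Lemma~\ref{Lem : LexMaxP}, pick a preimage of $d$ under $f_{\B}$, and use monotonicity to compare $f_{\B}(c)$ with $d$. One small simplification: contrary to your closing remark, there is no need to select the lex-greatest preimage $c^*$ and argue by contradiction that $c=c^*$; the paper simply picks \emph{any} $c'$ with $f_{\B}(c')=d$, observes that $c'$ satisfies the constraint so $c\ge_{\lex}c'$, and concludes $f_{\B}(c)\ge f_{\B}(c')=d$ directly from monotonicity, which combined with Proposition~\ref{Pro : plusgrand} gives $f_{\B}(c)=d$.
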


\begin{proof}
We keep the same notation as in the proof of Proposition~\ref{Pro : plusgrand}. Let $c'\in \prod_{i=0}^{p-1}\ [\![0,\ceil{\beta_i}-1]\!]$ such that $d=f_{\B}(c')$. By definition of $c$, we get $c\ge_{\lex} c'$. Now, if $f_{\B}$ is non-decreasing then $f_{\B}(c)\ge f_{\B}(c')=d$. Hence the conclusion.
\end{proof}

The following example shows that considering the length-$p$ alternate base $\B=(\beta,\ldots,\beta)$ with $p\in \N_{\ge 3}$, it may happen that $T_{\beta^p,\Delta_{\B}}$ differs from ${\pi_2 \circ T_{\B}^p \circ \delta_0}_{\big|[0,x_{\B})}$. This result was already proved in~\cite{Dajani&Vries&Komornik&Loreti:2012}. 

\begin{example}
Consider the alternate base $\B=(\varphi^2,\varphi^2,\varphi^2)$.  Then $\Delta_{\B}=\{\varphi^4 c_0 + \varphi^2 c_1 +c_2\colon c_0,c_1,c_2 \in \{0,1,2\}\}$. In~\cite[Proposition 2.1]{Dajani&Vries&Komornik&Loreti:2012}, it is proved that $T_{\beta^n,\Delta_{\B}}=T_{\beta}^n$ for all $n\in \N$ if and only if $f_{\B}$ is non-decreasing. Since $f_{\B}(0,2,2)=2\varphi^2 +2>\varphi^4=f_{\B}(1,0,0)$, the tranformations $T_{\varphi^6,\Delta_{\B}}$ and ${\pi_2 \circ T_{\B}^3 \circ \delta_0}_{\big|[0,x_{\B})}$ differ by~\cite[Proposition 2.1]{Dajani&Vries&Komornik&Loreti:2012}. 
\end{example}

Whenever $f_{\B}$ is not non-decreasing, the transformations $T_{\beta_{p-1}\cdots\beta_0,\Delta_{\B}}$ and ${\pi_2 \circ T_{\B}^p \circ \delta_0}_{\big|[0,x_{\B})}$ can either coincide or not. The following two examples illustrate both cases. In particular, Example~\ref{Ex : counterex} shows that the sufficient condition given in Theorem~\ref{Thm : fIncreasingEqual} is not necessary.

\begin{example}
Consider the alternate base $\B=(\varphi,\varphi,\sqrt{5})$. Then $\Delta_{\B}=\{\sqrt{5}\varphi c_0 + \sqrt{5}c_1 +c_2\colon c_0,c_1 \in \{0,1\},\ c_2 \in \{0,1,2\} \}$. However, $f_{\B}(0,1,2)=\sqrt{5}+2\simeq 4.23$ and $f_{\B}(1,0,0)=\sqrt{5}\varphi\simeq  3.61$. 
It can be easily check that there exists $x \in [0,x_{\B})$ such that $T_{\sqrt{5}\varphi^2,\Delta_{\B}}(x) \ne \pi_2 \circ T_{\B}^3 \circ \delta_0(x)$. For example, we can compute $T_{\sqrt{5}\varphi^2,\Delta_{\B}}(0.75) \simeq 0.15$ and $\pi_2 \circ T_{\B}^3 \circ \delta_0(0.75) \simeq 0.77$. The transformations $T_{\sqrt{5}\varphi^2,\Delta_{\B}}$ and ${\pi_2 \circ T_{\B}^3 \circ \delta_0}_{\big|[0,x_{\B})}$ are depicted in Figure~\ref{Fig : PhiPhiSqrt5}, where the red lines show the images of the interval $\big[\frac{\sqrt{5}+2}{\sqrt{5}\varphi^2},\frac{\sqrt{5}\varphi+1}{\sqrt{5}\varphi^2} \big)\simeq[0.72,0.78)$, that is where the two transformations differ. Similarly, the transformations $L_{\sqrt{5}\varphi^2,\Delta_{\B}}$ and ${\pi_2 \circ L_{\B}^3 \circ \delta_0}_{\big|(0,x_{\B}]}$ are depicted in Figure~\ref{Fig : PhiPhiSqrt5Lazy}. As illustrated in red, the two transformations differ on the interval $\phi_{\sqrt{5}\varphi^2,\Delta_{\B}}\Big(\big[\frac{\sqrt{5}+2}{\sqrt{5}\varphi^2},\frac{\sqrt{5}\varphi+1}{\sqrt{5}\varphi^2} \big)\Big)\simeq(0.82,0.89]$.
\begin{figure}[htb]
\centering
\begin{minipage}{.5\linewidth}
\begin{tikzpicture}[scale=0.6]
\draw[line width=0.0mm ] (0,0) rectangle (9.7082,9.7082); 
\draw[dotted,line width=0.3mm ] (6,0) to (6,6); 
\draw[dotted,line width=0.3mm ] (0,6) to (6,6); 
\draw[dotted,line width=0.15mm  ] (6,6) to (6,9.7082); 
\draw[dotted ,line width=0.15mm ] (6,6)to (9.7082,6); 
\draw[line width=0.0mm ] (0,0)  to (9.7082,9.7082); 
\draw (-0.1,-0.2) node[left]{$0$};
\draw (9.7082,0) node[below]{$x_{\B}$};
\draw (0,9.7082) node[left]{$x_{\B}$};
\draw (6,0) node[below]{$1$};
\draw (0,6) node[left]{$1$};
\draw[line width=0.4mm,blue] (0,0) to (1.02492,6);
\draw[line width=0.4mm,blue] (1.02492,0) to ( 2.04984,6);
\draw[line width=0.4mm,blue] (2.04984,0) to ( 2.2918,1.41641);
\draw[line width=0.4mm,blue] (2.2918,0) to ( 3.31672,6);
\draw[line width=0.4mm,blue] (3.31672,0) to ( 3.7082,2.2918);
\draw[line width=0.4mm,red] (3.7082,0) to ( 4.34164,3.7082);
\draw[line width=0.4mm,red] (4.3416,0) to (4.73313 ,2.2918);
\draw[line width=0.4mm,blue] (4.73313,0) to (5.75805 ,6);
\draw[line width=0.4mm,blue] (5.75805,0) to (6,1.41641);
\draw[line width=0.4mm,blue] (6,0) to (7.02492,6);
\draw[line width=0.4mm,blue] (7.02492,0) to (8.04984,6);
\draw[line width=0.4mm,blue] (8.04984,0) to (9.7082,9.7082);
\end{tikzpicture}
\end{minipage}%
\begin{minipage}{.5\linewidth}
\begin{tikzpicture}[scale=0.6]
\draw[line width=0.0mm ] (0,0) rectangle (9.7082,9.7082); 
\draw[dotted,line width=0.3mm ] (6,0) to (6,6); 
\draw[dotted,line width=0.3mm ] (0,6) to (6,6); 
\draw[dotted,line width=0.15mm  ] (6,6) to (6,9.7082); 
\draw[dotted ,line width=0.15mm ] (6,6)to (9.7082,6); 
\draw[line width=0.0mm ] (0,0)  to (9.7082,9.7082); 
\draw (-0.1,-0.2) node[left]{$0$};
\draw (9.7082,0) node[below]{$x_{\B}$};
\draw (0,9.7082) node[left]{$x_{\B}$};
\draw (6,0) node[below]{$1$};
\draw (0,6) node[left]{$1$};
\draw[line width=0.4mm,blue] (0,0) to (1.02492,6);
\draw[line width=0.4mm,blue] (1.02492,0) to ( 2.04984,6);
\draw[line width=0.4mm,blue] (2.04984,0) to ( 2.2918,1.41641);
\draw[line width=0.4mm,blue] (2.2918,0) to ( 3.31672,6);
\draw[line width=0.4mm,blue] (3.31672,0) to ( 3.7082,2.2918);
\draw[line width=0.4mm,red] (3.7082,0) to ( 4.73313,6);
\draw[line width=0.4mm,blue] (4.73313,0) to (5.75805 ,6);
\draw[line width=0.4mm,blue] (5.75805,0) to (6,1.41641);
\draw[line width=0.4mm,blue] (6,0) to (7.02492,6);
\draw[line width=0.4mm,blue] (7.02492,0) to (8.04984,6);
\draw[line width=0.4mm,blue] (8.04984,0) to (9.7082,9.7082);
\end{tikzpicture}
\end{minipage}
\caption{The transformations $T_{\sqrt{5}\varphi^2,\Delta_{\B}}$ (left) and ${\pi_2 \circ T_{\B}^3 \circ \delta_0}_{\big|[0,x_{\B})}$ (right) with $\B=(\varphi,\varphi,\sqrt{5})$.}
\label{Fig : PhiPhiSqrt5}
\end{figure}
\begin{figure}[htb]
\centering
\begin{minipage}{.5\linewidth}
\begin{tikzpicture}[scale=0.6]
\draw[line width=0.0mm ] (0,0) rectangle (9.7082,9.7082); 
\draw[dotted,line width=0.3mm ] (9.7082-6,9.7082-6) to (9.7082,9.7082-6); 
\draw[dotted,line width=0.3mm ] (9.7082-6,9.7082-6) to (9.7082-6,9.7082); 
\draw[dotted,line width=0.15mm ]  (9.7082-6,9.7082-6)to  (9.7082-6,0); 
\draw[dotted,line width=0.15mm ]  (9.7082-6,9.7082-6)to  (0,9.7082-6); 
\draw[line width=0.0mm ] (0,0)  to (9.7082,9.7082); 
\draw (-0.1,-0.2) node[left]{$0$};
\draw (9.7082,0) node[below]{$x_{\B}$};
\draw (0,9.7082) node[left]{$x_{\B}$};
\draw (9.7082-6,0) node[below]{$\smath{x_{\B}-1}$};
\draw (0,9.7082-6) node[left]{$\smath{x_{\B}-1}$};
\draw[line width=0.4mm,blue] (0,0) to (1.65836,9.7082);
\draw[line width=0.4mm,blue] (1.65836,3.7082) to (2.68328,9.7082);
\draw[line width=0.4mm,blue] (2.68328,3.7082) to (3.7082,9.7082);
\draw[line width=0.4mm,blue] (3.7082,8.2918) to (3.95016,9.7082);
\draw[line width=0.4mm,blue] (3.95016,3.7082) to (4.97508,9.7082);
\draw[line width=0.4mm,red] (4.97508,7.41641) to (5.36656,9.7082);
\draw[line width=0.4mm,red] (5.36656,6) to (6,9.7082);
\draw[line width=0.4mm,blue] (6,7.41641) to (6.39149,9.7082);
\draw[line width=0.4mm,blue] (6.39149,3.7082) to (7.41641,9.7082);
\draw[line width=0.4mm,blue] (7.41641,8.2918) to (7.65836,9.7082);
\draw[line width=0.4mm,blue] (7.65836,3.7082) to (8.68328,9.7082);
\draw[line width=0.4mm,blue] (8.68328,3.7082) to (9.7082,9.7082);
\end{tikzpicture}
\end{minipage}%
\begin{minipage}{.5\linewidth}
\begin{tikzpicture}[scale=0.6]
\draw[line width=0.0mm ] (0,0) rectangle (9.7082,9.7082); 
\draw[dotted,line width=0.3mm ] (9.7082-6,9.7082-6) to (9.7082,9.7082-6); 
\draw[dotted,line width=0.3mm ] (9.7082-6,9.7082-6) to (9.7082-6,9.7082); 
\draw[dotted,line width=0.15mm ]  (9.7082-6,9.7082-6)to  (9.7082-6,0); 
\draw[dotted,line width=0.15mm ]  (9.7082-6,9.7082-6)to  (0,9.7082-6); 
\draw[line width=0.0mm ] (0,0)  to (9.7082,9.7082); 
\draw (-0.1,-0.2) node[left]{$0$};
\draw (9.7082,0) node[below]{$x_{\B}$};
\draw (0,9.7082) node[left]{$x_{\B}$};
\draw (9.7082-6,0) node[below]{$\smath{x_{\B}-1}$};
\draw (0,9.7082-6) node[left]{$\smath{x_{\B}-1}$};
\draw[line width=0.4mm,blue] (0,0) to (1.65836,9.7082);
\draw[line width=0.4mm,blue] (1.65836,3.7082) to (2.68328,9.7082);
\draw[line width=0.4mm,blue] (2.68328,3.7082) to (3.7082,9.7082);
\draw[line width=0.4mm,blue] (3.7082,8.2918) to (3.95016,9.7082);
\draw[line width=0.4mm,blue] (3.95016,3.7082) to (4.97508,9.7082);
\draw[line width=0.4mm,red] (4.97508,3.7082) to (6,9.7082);
\draw[line width=0.4mm,blue] (6,7.41641) to (6.39149,9.7082);
\draw[line width=0.4mm,blue] (6.39149,3.7082) to (7.41641,9.7082);
\draw[line width=0.4mm,blue] (7.41641,8.2918) to (7.65836,9.7082);
\draw[line width=0.4mm,blue] (7.65836,3.7082) to (8.68328,9.7082);
\draw[line width=0.4mm,blue] (8.68328,3.7082) to (9.7082,9.7082);
\end{tikzpicture}
\end{minipage}
\caption{The transformations $L_{\sqrt{5}\varphi^2,\Delta_{\B}}$ (left) and ${\pi_2 \circ L_{\B}^3 \circ \delta_0}_{\big|[0,x_{\B})}$ (right) with $\B=(\varphi,\varphi,\sqrt{5})$.}
\label{Fig : PhiPhiSqrt5Lazy}
\end{figure}
\end{example} 

\begin{example}
\label{Ex : counterex}
Consider the alternate base $\B=(\frac{3}{2},\frac{3}{2},4)$. We have $\Delta_{\B}=[\![0,13]\!]$. The map $f_{\B}$ is not non-decreasing since we have $f_{\B}(0,1,3)=7$ and $f_{\B}(1,0,0)=6$. However, $T_{9,\Delta_{\B}}={\pi_2 \circ T_{\B}^3 \circ \delta_0}_{\big|[0,x_{\B})}$ and $L_{9,\Delta_{\B}}={\pi_2 \circ L_{\B}^3 \circ \delta_0}_{\big|[0,x_{\B})}$. The transformation $T_{9,\Delta_{\B}}$ is depicted in Figure~\ref{Fig : counterex}.
\begin{figure}
\begin{tikzpicture}[scale=3.5]
\draw[line width=0.0mm ] (0,0) rectangle (1.625,1.625); 
\draw[dotted,line width=0.3mm ] (1,0) to (1,1); 
\draw[dotted,line width=0.3mm ] (0,1) to (1,1); 
\draw[dotted,line width=0.15mm  ] (1,1) to (1,1.625); 
\draw[dotted ,line width=0.15mm ] (1,1)to (1.625,1); 
\draw[line width=0.0mm ] (0,0)  to (1.625,1.625); 
\draw (-0.01,-0.02) node[left]{$0$};
\draw (1.625,0) node[below]{$x_{\B}$};
\draw (0,1.625) node[left]{$x_{\B}$};
\draw (1,0) node[below]{$1$};
\draw (0,1) node[left]{$1$};
\draw[line width=0.4mm,blue] (0,0) to (0.111111,1);
\draw[line width=0.4mm,blue] (0.111111,0) to (0.111111*2,1);
\draw[line width=0.4mm,blue] (0.111111*2,0) to (0.111111*3,1);
\draw[line width=0.4mm,blue] (0.111111*2,0) to (0.111111*3,1);
\draw[line width=0.4mm,blue] (0.111111*3,0) to (0.111111*4,1);
\draw[line width=0.4mm,blue] (0.111111*4,0) to (0.111111*5,1);
\draw[line width=0.4mm,blue] (0.111111*5,0) to (0.111111*6,1);
\draw[line width=0.4mm,blue] (0.111111*6,0) to (0.111111*7,1);
\draw[line width=0.4mm,blue] (0.111111*7,0) to (0.111111*8,1);
\draw[line width=0.4mm,blue] (0.111111*8,0) to (0.111111*9,1);
\draw[line width=0.4mm,blue] (0.111111*9,0) to (0.111111*10,1);
\draw[line width=0.4mm,blue] (0.111111*10,0) to (0.111111*11,1);
\draw[line width=0.4mm,blue] (0.111111*11,0) to (0.111111*12,1);
\draw[line width=0.4mm,blue] (0.111111*12,0) to (0.111111*13,1);
\draw[line width=0.4mm,blue] (0.111111*13,0) to (1.625,1.625);
\end{tikzpicture}
\caption{The transformations $T_{9,\Delta_{\B}}$ where $\B=(\frac{3}{2},\frac{3}{2},4)$.}
\label{Fig : counterex}
\end{figure}
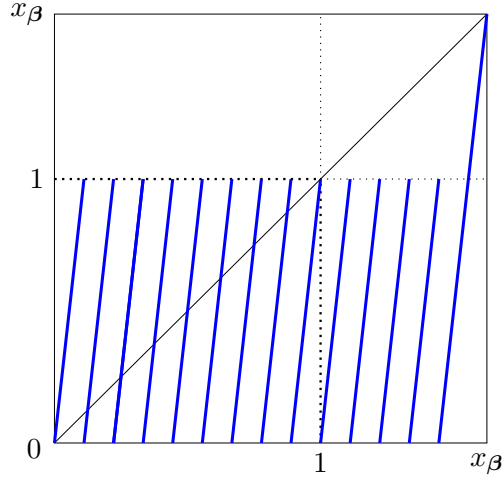

\end{example}

The next example illustrates that it may happen that the transformations  $T_{\beta_{p-1}\cdots\beta_0,\Delta_{\B}}$ and ${\pi_2 \circ T_{\B}^p \circ \delta_0}_{\big|[0,x_{\B})}$ indeed coincide on $[0,1)$ but not on $[0,x_{\B})$.

\begin{example}
Consider the alternate base $\B=(\frac{\sqrt{5}}{2},\frac{\sqrt{6}}{2},\frac{\sqrt{7}}{2})$. Then $f_{\B}(0,1,1)>f_{\B}(1,0,0)$ and it can be checked that the maps  $T_{\frac{\sqrt{210}}{8},\Delta_{\B}}$ and ${\pi_2 \circ T_{\B}^3 \circ \delta_0}_{\big|[0,x_{\B})}$ differ on the interval $\big[\frac{f_{\B}(0,1,1)}{\beta_2\beta_1\beta_0},\frac{f_{\B}(1,0,1)}{\beta_2\beta_1\beta_0}\big)\simeq [1.28,1.44)$. However, the two maps coincide on $[0,1)$.
\end{example}

Finally, we provide a necessary and sufficient condition for the map $f_{\B}$ to be non-decreasing. 

\begin{proposition}
\label{Pro : CaractNonDecreasing}
The map $f_{\B}$ is non-decreasing if and only if for all $j\in[\![1,p-2]\!]$,
\begin{equation}
\label{Eq : NonDecreasing}
	\sum_{i=j}^{p-1}\beta_{p-1}\cdots\beta_{i+1}(\ceil{\beta_i}-1)\le \beta_{p-1}\cdots\beta_j.
\end{equation}
\end{proposition}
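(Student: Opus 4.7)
The plan is to reduce the non-decreasingness of $f_{\B}$ to the single-step inequality $f_{\B}(c)\le f_{\B}(c')$ on lexicographically consecutive tuples $c<_\lex c'$, and then to read off the conditions by directly computing $f_{\B}(c')-f_{\B}(c)$ as in the proof of Lemma~\ref{Lem : AllowableDiditSet}. Since $\prod_{i=0}^{p-1}[\![0,\ceil{\beta_i}-1]\!]$ is finite and totally ordered by $<_\lex$, transitivity makes this reduction legitimate: if the inequality holds across every consecutive jump, it holds across every comparison.

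Next I would describe the structure of a lex-consecutive pair. Given $c$ not maximal, one locates the largest index $j\in[\![0,p-1]\!]$ for which $c_j<\ceil{\beta_j}-1$; by maximality of $j$, one necessarily has $c_{j+1}=\ceil{\beta_{j+1}}-1,\ldots,c_{p-1}=\ceil{\beta_{p-1}}-1$, while the successor $c'$ is obtained from $c$ by replacing $c_j$ with $c_j+1$ and resetting $c_{j+1},\ldots,c_{p-1}$ to $0$. A direct calculation then yields
\[
    f_{\B}(c')-f_{\B}(c)
    =\beta_{p-1}\cdots\beta_{j+1}-\sum_{i=j+1}^{p-1}\beta_{p-1}\cdots\beta_{i+1}(\ceil{\beta_i}-1),
\]
where the first term corresponds to incrementing $c_j$ by one and the sum accounts for resetting the maximal digits to zero. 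Consequently, $f_{\B}$ is non-decreasing if and only if this quantity is nonnegative for every $j\in[\![0,p-1]\!]$.

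The last step is a short case analysis, together with a re-indexing. The case $j=p-1$ reduces to $1\ge 0$ (an empty sum versus an empty product equal to $1$) and is automatic. The case $j=p-2$ reduces to $\beta_{p-1}-(\ceil{\beta_{p-1}}-1)\ge 0$, which holds because $\ceil{\beta_{p-1}}\le\beta_{p-1}+1$. The remaining cases $j\in[\![0,p-3]\!]$ correspond, after the substitution $j'=j+1\in[\![1,p-2]\!]$, to exactly the inequalities \eqref{Eq : NonDecreasing}. This yields the announced equivalence, and in particular shows that for $p\in\{1,2\}$ the proposition asserts the vacuously true statement that $f_{\B}$ is always non-decreasing, which is easily verified directly. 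No serious obstacle is expected here; the only place that requires care is the index bookkeeping in the derivative-like formula for $f_{\B}(c')-f_{\B}(c)$ and the identification of the two trivial boundary cases that are absorbed into the hypothesis.
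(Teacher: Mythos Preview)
Your argument is correct. The computational core---the formula
\[
f_{\B}(c')-f_{\B}(c)=\beta_{p-1}\cdots\beta_{j+1}-\sum_{i=j+1}^{p-1}\beta_{p-1}\cdots\beta_{i+1}(\ceil{\beta_i}-1)
\]
for lex-consecutive $c,c'$---is exactly the calculation from the proof of Lemma~\ref{Lem : AllowableDiditSet}, and your re-indexing and disposal of the two boundary cases $j=p-1,p-2$ are clean. One small point you leave implicit: in order to conclude that the condition for a given $j$ is \emph{necessary}, you need that every $j\in[\![0,p-1]\!]$ actually arises as the rightmost non-maximal index of some tuple; this holds because each $\beta_i>1$ forces $\ceil{\beta_i}-1\ge 1$, so the tuple $(0,\ldots,0,\ceil{\beta_{j+1}}-1,\ldots,\ceil{\beta_{p-1}}-1)$ works.

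The paper organizes the argument differently. For the forward direction it simply evaluates $f_{\B}$ at the two specific (non-consecutive) tuples $(0,\ldots,0,\ceil{\beta_j}-1,\ldots,\ceil{\beta_{p-1}}-1)$ and $(0,\ldots,0,1,0,\ldots,0)$ to read off \eqref{Eq : NonDecreasing} directly. For the converse it compares an \emph{arbitrary} pair $c<_\lex c'$ in one shot, bounding $f_{\B}(c)$ above by replacing the tail $c_{j+1},\ldots,c_{p-1}$ with the maximal digits and then invoking \eqref{Eq : NonDecreasing} (shifted by one index). Your reduction to consecutive pairs via transitivity is arguably tidier and makes the equivalence more transparent, at the small cost of the realizability check above; the paper's direct comparison avoids that check but has to handle general pairs. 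Both routes are short and neither offers a real advantage over the other.
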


\begin{proof}
If the map $f_{\B}$ is non-decreasing then for all $j\in[\![1,p-2]\!]$, 
\begin{align*}
\sum_{i=j}^{p-1}\beta_{p-1}\cdots\beta_{i+1}(\ceil{\beta_i}-1)
&= f_{\B}(0,\ldots,0,0,\ceil{\beta_j}-1,\ldots,\ceil{\beta_{p-1}}-1)\\ 
&\le f_{\B}(0,\ldots,0,1,0,\ldots,0)\\
&=\beta_{p-1}\cdots\beta_j.
\end{align*}

Conversely, suppose that~\eqref{Eq : NonDecreasing} holds for all $j\in[\![1,p-2]\!]$ and that $(c_0,\ldots,c_{p-1})$ and $(c_0',\ldots,c_{p-1}')$ are $p$-tuples in $\prod_{i=0}^{p-1}\ [\![0,\ceil{\beta_i}-1]\!]$ such that $(c_0,\ldots,c_{p-1}) <_{\lex}(c_0',\ldots,c_{p-1}')$. Then there exists $j\in\Int$ such that $c_0=c_0',\ldots,c_{j-1}=c_{j-1}'$ and $c_j\le c_j'-1$. We get
\begin{align*}
	f_{\B}(c_0,\ldots,c_{p-1}) 
	&\le 
	\sum_{i=0}^j\beta_{p-1}\cdots\beta_{i+1}c_i' 
	- \beta_{p-1}\cdots\beta_{j+1}
	+\sum_{i=j+1}^{p-1}\beta_{p-1}\cdots\beta_{i+1}(\ceil{\beta_i}-1)\\ 
	&\le 
	\sum_{i=0}^j\beta_{p-1}\cdots\beta_{i+1}c_i'\\		&\le f_{\B}(c_0',\ldots,c_{p-1}').
\end{align*}
\end{proof}

\begin{corollary}
\label{Coro : Length2Equal}
If $p=2$ then $T_{\beta_1\beta_0,\Delta_{\B}}={\pi_2 \circ T_{\B}^2 \circ \delta_0}_{\big|[0,x_{\B})}$. 
In particular, ${T_{\beta_1\beta_0,\Delta_{\B}}}_{\big|[0,1)}=T_{\beta_1} \circ T_{\beta_0}$.
\end{corollary}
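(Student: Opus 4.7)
The plan is to derive this as a direct consequence of Theorem~\ref{Thm : fIncreasingEqual} combined with Proposition~\ref{Pro : CaractNonDecreasing}. Specifically, I would show that the map $f_{\B}$ is automatically non-decreasing when $p=2$, so the desired equality falls out immediately.

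To that end, I would apply Proposition~\ref{Pro : CaractNonDecreasing} with $p=2$. The condition~\eqref{Eq : NonDecreasing} must then be checked only for $j\in[\![1,p-2]\!]=[\![1,0]\!]=\emptyset$, so it holds vacuously. Hence $f_{\B}$ is non-decreasing, and Theorem~\ref{Thm : fIncreasingEqual} gives $T_{\beta_1\beta_0,\Delta_{\B}}={\pi_2\circ T_{\B}^2\circ\delta_0}_{\big|[0,x_{\B})}$.

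For the ``in particular'' statement, I would invoke the observation made just before Proposition~\ref{Pro : CompositionGreedyCantor}, namely that the restriction of $\pi_2\circ T_{\B}^n\circ\delta_0$ to the unit interval $[0,1)$ coincides with $T_{\beta_{n-1}}\circ\cdots\circ T_{\beta_0}$. Applied with $n=p=2$, this yields ${\pi_2\circ T_{\B}^2\circ\delta_0}_{\big|[0,1)}=T_{\beta_1}\circ T_{\beta_0}$, and combining with the first equality gives the desired conclusion ${T_{\beta_1\beta_0,\Delta_{\B}}}_{\big|[0,1)}=T_{\beta_1}\circ T_{\beta_0}$.

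There is no real obstacle here: the corollary is essentially a bookkeeping remark that the hypothesis of Theorem~\ref{Thm : fIncreasingEqual} becomes free when $p=2$, because the characterization in Proposition~\ref{Pro : CaractNonDecreasing} requires inequalities only for indices $j\in[\![1,p-2]\!]$, a set which is empty in the length-$2$ case.
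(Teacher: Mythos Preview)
Your proposal is correct and takes essentially the same approach as the paper, which simply states that the corollary follows from Theorem~\ref{Thm : fIncreasingEqual} and Proposition~\ref{Pro : CaractNonDecreasing}. Your explicit observation that the index set $[\![1,p-2]\!]$ is empty when $p=2$, making condition~\eqref{Eq : NonDecreasing} vacuous, is exactly the point the paper leaves implicit.
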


\begin{proof}
This follows from Theorem~\ref{Thm : fIncreasingEqual} and Proposition~\ref{Pro : CaractNonDecreasing}.
\end{proof}

\begin{example}
\label{Ex : TBsquared13}
Consider once more the alternate base $\B=(\frac{1+\sqrt{13}}{2}, \frac{5+\sqrt{13}}{6})$ from Example~\ref{Ex : TbetaRacine13}. Then $\Delta_{\B}=\{ 0,1,\beta_1,\beta_1+1, 2\beta_1, 2\beta_1+1\}$ and $x_{\B}= \frac{2\beta_1+1}{\beta_1\beta_0-1}= \frac{5 +7 \sqrt{13}}{18}$. The transformations ${\pi_2 \circ T_{\B}^2 \circ \delta_0}_{\big|[0,x_{\B})}$ and ${\pi_2 \circ L_{\B}^2 \circ \delta_0}_{\big|(0,x_{\B}]}$ are depicted in Figure~\ref{Fig : TBsquared13}. By Corollary~\ref{Coro : Length2Equal}, they coincides with $T_{\beta_1 \beta_0,\Delta_{\B}}$ and $L_{\beta_1 \beta_0,\Delta_{\B}}$ respectively.
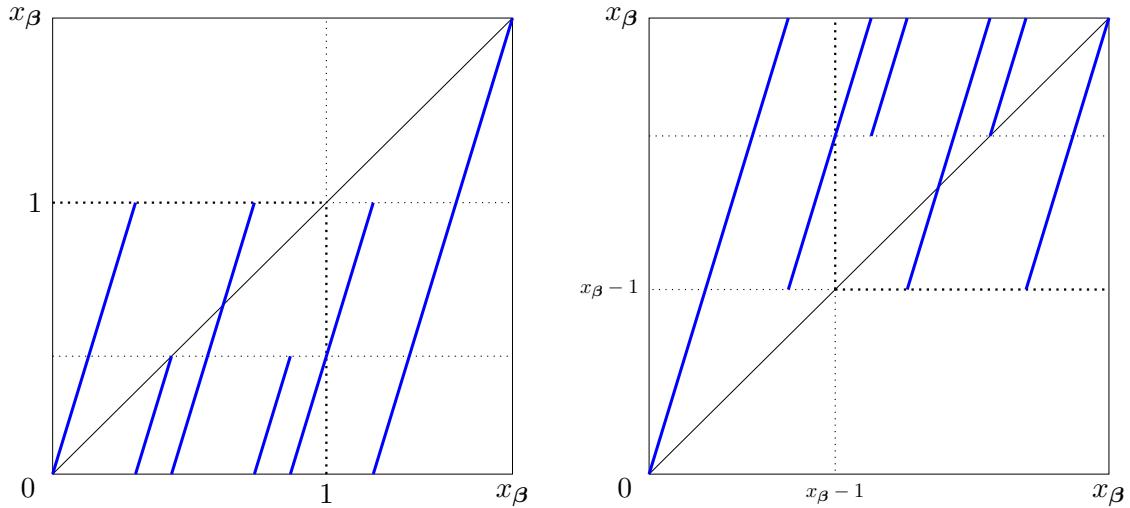
\begin{figure}[htb]
\centering
\begin{minipage}{.5\linewidth}
\begin{tikzpicture}[scale=0.9]
\draw[line width=0.0mm ] (0,0) rectangle (6.71975,6.71975); 
\draw[dotted,line width=0.3mm ] (4,0) to (4,4); 
\draw[dotted,line width=0.3mm ] (0,4) to (4,4); 
\draw[dotted,line width=0.15mm ] (4,4) to (4,6.71975); 
\draw[dotted,line width=0.15mm ] (4,4) to (6.71975,4); 
\draw[dotted,line width=0.15mm ] (0,1.73703) to (6.71975,1.73703); 
\draw[line width=0.0mm ] (0,0) to (6.71975,6.71975); 
\draw (-0.1,-0.2) node[left]{$0$};
\draw (4,0) node[below]{$1$};
\draw (0,4) node[left]{$1$};
\draw (6.71975,0) node[below]{$x_{\B}$};
\draw (0,6.71975) node[left]{$x_{\B}$};
\draw[line width=0.4mm,blue] (0,0) to (1.2111,4); 
\draw[line width=0.4mm,blue] (1.2111,0) to (1.73703,1.73703); 
\draw[line width=0.4mm,blue] (1.73703,0) to (2.94814,4); 
\draw[line width=0.4mm,blue] (2.94814,0) to (3.47407,1.73703); 
\draw[line width=0.4mm,blue] (3.47407,0) to (4.68517,4); 
\draw[line width=0.4mm,blue] (4.68517,0) to (6.71975,6.71975); 
\end{tikzpicture}
\end{minipage}%
\begin{minipage}{.5\linewidth}
\begin{tikzpicture}[scale=0.9]
\draw[line width=0.0mm ] (0,0) rectangle (6.71975,6.71975); 
\draw[dotted,line width=0.3mm ] (6.71975-4,6.71975-4)to (6.71975,6.71975-4); 
\draw[dotted,line width=0.3mm ] (6.71975-4,6.71975-4) to (6.71975-4,6.71975); 
\draw[dotted,line width=0.15mm ] (6.71975-4,6.71975-4)  to (0,6.71975-4); 
\draw[dotted,line width=0.15mm ] (6.71975-4,6.71975-4)  to (6.71975-4,0);
\draw[dotted,line width=0.15mm ] (0,4.98271)  to (6.71975,4.98271); 
\draw[line width=0.0mm ] (0,0) to (6.71975,6.71975); 
\draw (-0.1,-0.2) node[left]{$0$};
\draw (6.71975-4,0) node[below]{$\smath{x_{\B}-1}$};
\draw (0,6.71975-4) node[left]{$\smath{x_{\B}-1}$};
\draw (6.71975,0) node[below]{$x_{\B}$};
\draw (0,6.71975) node[left]{$x_{\B}$};
\draw[line width=0.4mm,blue] (0,0) to (2.03458,6.71975); 
\draw[line width=0.4mm,blue] (2.03458,2.71975) to (3.24568,6.71975); 
\draw[line width=0.4mm,blue] (3.24568,4.98271) to (3.77161,6.71975); 
\draw[line width=0.4mm,blue] (3.77161,2.71975) to (4.98271,6.71975); 
\draw[line width=0.4mm,blue] (4.98271,4.98271) to (5.50864,6.71975); 
\draw[line width=0.4mm,blue] (5.50864,2.71975) to (6.71975,6.71975); 
\end{tikzpicture}
\end{minipage}%
\caption{The transformations ${\pi_2 \circ T_{\B}^2 \circ \delta_0}_{\big|[0,x_{\B})}$ (left) and ${\pi_2 \circ L_{\B}^2 \circ \delta_0}_{\big|(0,x_{\B}]}$ (right) for $\B=(\tfrac{1+\sqrt{13}}{2}, \tfrac{5+\sqrt{13}}{6})$.}
\label{Fig : TBsquared13}
\end{figure}
\end{example}

\section{Acknowledgment}
We thank Julien Leroy for suggesting Lemma~\ref{Lem : Julien}, which allowed us to simplify several proofs. Célia Cisternino is supported by the FNRS Research Fellow grant 1.A.564.19F.

\bibliographystyle{abbrv}
\bibliography{Charlier-Cisternino-Dajani}

\end{document}